\theoremstyle{thmstyleone}%
\newtheorem{theorem}{Theorem}
\theoremstyle{thmstyletwo}%
\newtheorem{remark}{Remark}%
\newtheorem{lem}{Lemma}
\theoremstyle{thmstylethree}%
\newtheorem{definition}{Definition}%
\newtheorem{coro}{Corollary}
\begin{document}

\title{Relaxation equations with stretched non-local operators: renewals and time-changed processes}

\author*[1]{\fnm{Luisa} \sur{Beghin}}\email{luisa.beghin@uniroma1.it}\equalcont{These authors contributed equally to this work.}

\author[2]{\fnm{Nikolai} \sur{Leonenko}}\email{LeonenkoN@cardiff.ac.uk}
\equalcont{These authors contributed equally to this work.}

\author[3]{\fnm{Jayme} \sur{Vaz}}\email{vaz@unicamp.br}
\equalcont{These authors contributed equally to this work.}

\affil*[1]{
\orgname{Sapienza University of Rome}, 
\country{Italy}}

\affil[2]{
\orgname{Cardiff University}, 
\country{United Kingdom}}

\affil[3]{
\orgname{Universidade Estadual de Campinas}, 
\country{Brazil}}

\abstract{
We introduce and study renewal processes defined by means of
extensions of the standard relaxation equation through ``stretched" non-local operators (of order $\alpha$ and with parameter $\gamma$). In a first case we obtain a generalization of the fractional Poisson process, which displays either infinite or finite expected waiting times between arrivals, depending on the parameter $\gamma$. Therefore, the introduction in the operator of the non-homogeneous term driven by $\gamma$ allows us to regulate the transition between different regimes of our renewal process.
We then consider a second-order relaxation-type equation involving the same operator,  under different sets of conditions on the constants involved; for a particular choice of these constants, we prove that the corresponding renewal process is linked to the first one by convex combination  of its distributions. We also discuss alternative models related to the same equations and their time-changed representation, in terms of the inverse of a non-decreasing process which generalizes the $\alpha$-stable L\'evy subordinator.

\vspace{0.5cm}
[AMS 2020]{ Primary: 60K05, 34A08. Secondary: 33E12, 26A33.}}

\keywords{stretched non-local operators, renewal processes, Kilbas-Saigo function, time-changed processes.}

\date{}

\maketitle

\section{Introduction}

We define here the following non-local operator
\begin{equation}
\label{eq.1}
\operatorname{\mathcal{D}}^{(\alpha,\gamma)}_t := t^{-\gamma} {\sideset{_{\scriptscriptstyle C}^{}}{_t^{(\alpha)}}{\operatorname{\mathcal{D}}}}, \qquad t >0,
\end{equation}
where  ${\sideset{_{\scriptscriptstyle C}^{}}{_t^{(\alpha)}}{\operatorname{\mathcal{D}}}} $ is the
Caputo derivative of order $\alpha$ (for $0 < \alpha < 1$) with respect to $t$, that is
\begin{equation}
{\sideset{_{\scriptscriptstyle C}^{}}{_t^{(\alpha)}}{\operatorname{\mathcal{D}}}}f(t) =
\frac{1}{\Gamma(1-\alpha)}\int_0^t \frac{f^\prime(\tau)}{(t-\tau)^\alpha}, d\tau , \qquad t >0, \notag
\end{equation}
which is defined for any absolutely continuous function, i.e. $f \in AC[0,t]$. The constant
$\gamma$ is a real parameter such that $\alpha + \gamma > 0$,   representing the ``stretching" parameter of the time argument.
We then analyze renewal processes defined by means of fractional equations which generalize the well-known relaxation equation
\begin{equation}
\frac{d}{dt}f(t)+\lambda f(t)=0, \qquad t > 0,  \lambda>0, \label{relax}
\end{equation}
with $f(0)=1,$ whose solution is the survival probability of the Poisson interarrival times (i.e. $f(t)=e^{-\lambda t}$, for $ t\geq 0$).

Stretched fractional operators are advanced mathematical tools used to model complex natural phenomena characterized by memory effects, non-local interactions, and fractal-like structures. In nature, these operators allow to model anomalous diffusion, heat and mass transfer, fluid dynamics,  complex biological systems, and quantum optics (see, e.g., \cite{LAS2},  \cite{Capelas1}, \cite{Capelas2}, \cite{POV}, \cite{VAZ}, and the references therein).

We start by considering a relaxation-type equation, where the first derivative in \eqref{relax} is replaced by the fractional operator  defined in \eqref{eq.1}, i.e.
\begin{equation}\label{uff}
\operatorname{\mathcal{D}}^{(\alpha,\gamma)}_tf(t)+\kappa f(t)=0,
\end{equation}
and we use the corresponding solution (expressed in terms of Kilbas-Saigo function) in order to define a renewal process $\mathcal{N}_{\alpha,\gamma}:=\left\{\mathcal{N}_{\alpha,\gamma}(t) \right\}_{t \geq 0}$.
Since, for $\gamma=0$, the operator $\mathcal{D}^{(\alpha,\gamma)}_t$ reduces to the Caputo type derivative $\mathcal{D}^{(\alpha)}_t$, we can consider this renewal process as an extension of the well-known time-fractional Poisson process (see, for example, \cite{LAS1}, \cite{MAI}, \cite{BEG2009}, \cite{MEE}).

Fractional extensions of the Poisson process in different directions can be found in \cite{KOC} (where general fractional calculus is introduced in the theory of renewals), \cite{GAR} (with a fractional derivative of variable order) and \cite{ALE} (in the case of fractional Poisson fields), among many others.

As we will see, when $\alpha+\gamma<1$, the survival function of the interarrival times of $\mathcal{N}_{\alpha,\gamma}$ coincides with the waiting time of the first event of the fractional non-homogeneous model introduced by Laskin in \cite{LAS}. This model is characterized by probability
mass function (hereafter p.m.f.) satisfying the following difference-differential equation
\begin{equation}\label{las}
    {\sideset{_{\scriptscriptstyle C}^{}}{_t^{(\alpha)}}{\operatorname{\mathcal{D}}}}p(n;t) + \lambda t^\gamma(p(n;t)-p(n-1;t)) = 0, \qquad n\in \mathbb{N}_0, t > 0,
\end{equation}
with initial condition $p(n;0)=\delta_0(n)$, $n=0,1,...$ and the convention that $p(j;t)=0$, for $j<0$ and for any $t$. In the special case where $\alpha=1$ and $\gamma=0$, the solution to the previous equation coincides with the p.m.f. of the Poisson process (with intensity $\lambda$), denoted by $\mathcal{N}:=\left\{\mathcal{N}(t)\right\}_{t \geq 0}$, i.e. $p(n;t)=e^{-\lambda t}\frac{(\lambda t)^n}{n!},$ $n \in \mathbb{N}_0$, $t \geq 0$.

It is proved in \cite{LAS} that the analytic solution to \eqref{las}
 is given in terms of successive derivatives of the Kilbas-Saigo (KS) function
 $\operatorname{E}_{\alpha,1+\gamma/\alpha,\gamma/\alpha}\left(-\lambda t^{\alpha+\gamma}\right)$ - see eq.\eqref{KS_function} -- i.e.
\begin{equation}p^L_{\alpha, \gamma}(n;t)=\frac{\left(\lambda t^{\alpha + \gamma}\right)^n}{n!}\operatorname{E}_{\alpha,1+\gamma/\alpha,\gamma/\alpha}^{(n)}\left(-\lambda t^{\alpha+\gamma}\right),\qquad n\in \mathbb{N}_0,t \geq 0, \label{pk}
\end{equation}
where
\begin{equation}
    \label{eq.4.a}
\operatorname{E}_{\alpha,1+\gamma/\alpha,\gamma/\alpha}^{(n)}(a):=\frac{d^n}{dx^n}\operatorname{E}_{\alpha,1+\gamma/\alpha,\gamma/\alpha}(x)\vert_{x=a}.
\end{equation}
Note that we will denote by the superscript $L$, both the solution \eqref{pk} and the corresponding process $\mathcal{N}^L_{\alpha,\gamma}:=\left\{\mathcal{N}^L_{\alpha,\gamma}(t)\right\}_{t \geq 0}$ defined by Laskin in \cite{LAS}, in order to distinguish it from those of our renewal model.

Some properties of the KS function, such as inequalities, complete monotonicity and spectral density can be found in \cite{BOU} and \cite{Capelas1}. In particular, since the KS function in \eqref{KS_function} is completely monotone (hereafter CM) for $\operatorname{Re} z<0$, $\alpha \in (0,1]$, $l \geq m-1/\alpha$ and $m>0$ (see Theorem 1 in \cite{Simon}), we will assume hereafter that $\alpha \in (0,1]$ and $\gamma>-\alpha$. Moreover, for $\alpha+\gamma \leq 1$, the p.m.f. given in \eqref{pk} is well-defined, since $\operatorname{E}_{\alpha,1+\gamma/\alpha,\gamma/\alpha}^{(n)}\left(-\lambda t^{\alpha+\gamma}\right) \geq 0$, for any $n \geq 0$. Nevertheless, since we are using a different approach than \cite{LAS}, we will consider also the case $\alpha+\gamma>1$; however, in the last case, no explicit analytical expression for the p.m.f. can be given.

We prove that for our model $\mathcal{N}_{\alpha,\gamma}$, when $\alpha+\gamma \leq 1$, the expected interarrival time between successive renewals is infinite (as for the fractional Poisson process, i.e. for $\gamma=0$), contrary to what happens for $\alpha+\gamma >1$.
As a consequence, the introduction in the operator \eqref{eq.1} of the non-homogeneous term driven by $\gamma$ permits us to regulate the transition between different regimes of our renewal process. A time-changed representation of the interarrival times is also derived in terms of the non-decreasing process $\mathcal{A}_{\alpha,\gamma}:=\left\{\mathcal{A}_{\alpha,\gamma}(t)\right\}_{t \geq 0}$ (see Theorem \ref{thmsub} below, for its definition). The latter is proved to generalize the $\alpha$-stable L\'{e}vy subordinator, to which it reduces, in the special case $\gamma=0$.

We then introduce a second-order term, involving the operator $\mathcal{D}^{(\alpha,\gamma)}_t$, in the relaxation-type equation \eqref{uff} as follows
\begin{equation}\label{uff2}
\left(\operatorname{\mathcal{D}}^{(\alpha,\gamma)}_t\right)^2 f(t) + a\operatorname{\mathcal{D}}^{(\alpha,\gamma)}_t f(t) +
b f(t) = 0 , \qquad t > 0,
\end{equation}
with $\alpha \in (0,1)$, $\alpha + \gamma > 0$, initial conditions $f(0) = \text{const}$ and $\operatorname{\mathcal{D}}^{(\alpha,\gamma)}_t f(t)|_{t=0}= \text{const}$,
$a,b \in \mathbb{R}^+$ and by $\left(\operatorname{\mathcal{D}}^{(\alpha,\gamma)}_t\right)^2$ we denote, for brevity, the sequential application of the operator in \eqref{eq.1} (see \cite{POD}, Sec. 2.5, for details on sequential fractional derivatives). We are interested in the solution to \eqref{uff2} as interarrival times' survival probability for an alternative renewal process.
Under some assumptions on $a$ and $b$, we prove that the latter
is linked to  $\mathcal{N}_{\alpha,\gamma}$ by convex combination  of its distributions.

Finally, we provide time-change representations for the process $\mathcal{N}^L_{\alpha,\gamma}$ and its second-order generalization (related to equation \eqref{uff2}), as a standard Poisson process time-changed by the inverse of the process $\mathcal{A}_{\alpha,\gamma}$, under an independence assumption.

\section{Preliminaries}

For the sake of completeness and ease of reading, in this section we present some definitions and results used in this work that are not widely known.

\subsection{The double gamma function}

The double gamma function, denoted by $G(z; \tau)$, is a
generalization of the Barnes $G$-function $G(z)$, which in turn is a
generalization of the gamma function $\Gamma(z)$. It is defined as \cite{Alexanian}
\begin{equation}
\label{ap.B.G.tau.1}
\begin{split}
G(z;\tau) = & \frac{z}{\tau} \exp\left[a(\tau)\frac{z}{\tau} + b(\tau)\frac{z^2}{2\tau}\right] \\
& \cdot \prod_{m,n\in \mathbb{N}_\ast^2} \left[
\left(1+\frac{z}{m\tau+n}\right) \exp\left(-\frac{z}{m\tau+n}+\frac{z^2}{2(m\tau+n)^2}\right)\right]
\end{split}
\end{equation}
where $a(\tau)$ and $b(\tau)$ are given by
\begin{equation}\notag
\begin{split}
& a(\tau) = -\psi(1) \tau + \frac{\tau}{2}\log{(2\pi \tau)} + \frac{1}{2}\log\tau - \tau C(\tau) , \\
& b(\tau) = - \frac{\pi^2 \tau^2}{6} - \tau \log\tau - \tau^2 D(\tau) ,
\end{split}
\end{equation}
with
\begin{equation}\notag
\begin{split}
& C(\tau) = \lim_{m\to \infty} \left[\sum_{k=1}^{m-1}\psi(k\tau) + \frac{1}{2}\psi(m\tau) - \frac{1}{\tau}
\log\left(\frac{\Gamma(m\tau)}{\sqrt{2\pi}}\right) \right] , \\
& D(\tau) = \lim_{m\to \infty} \left[\sum_{k=1}^{m-1}\psi^\prime(k\tau) + \frac{1}{2}\psi^\prime(m\tau) - \frac{1}{\tau}\psi(m\tau) \right] ,
\end{split}
\end{equation}
where $\psi(\cdot)$ is the digamma function (and so $-\psi(1)$ is the Euler-Mascheroni constant).
Another definition of double gamma function is
\begin{equation}
\label{ap.B.G.tau.2}
G(z;\tau) = \frac{1}{\tau\Gamma(z)} \exp\left[\tilde{a}(\tau)\frac{z}{\tau} + \tilde{b}(\tau)
\frac{z^2}{2\tau^2}\right] \prod_{m=1}^\infty \frac{\Gamma(m\tau)}{\Gamma(z+m\tau)}
\exp\left[z\psi(m\tau)+ \frac{z^2}{2}\psi^\prime(m\tau) \right] ,
\end{equation}
where
\begin{equation}\notag
\tilde{a}(\tau) = a(\tau) - \gamma\tau , \qquad \tilde{b}(\tau) = b(\tau) + \frac{\pi^2 \tau^2}{6} .
\end{equation}
The Barnes $G$-function $G(z)$ is a particular case of the double
gamma function $G(z;\tau)$ for $\tau = 1$. Note that $C(1) = 1/2$ and $D(1) = 1+ \gamma$, as shown in \cite{Genesis}.

The double gamma function is an entire function and its zeros are located
at $z = z_{mn} = -m\tau-n$ ($m,n = 0,1,2,\ldots$).
It is defined in such a way that \cite{Genesis}
\begin{equation}
\label{ap.B.G.tau.(1)}
G(1;\tau) = 1 .
\end{equation}
It satisfies the functional relations
\begin{equation}
\label{ap.B.general.G.tau}
G(z+1;\tau) = \Gamma(z/\tau) G(z;\tau)
\end{equation}
and
\begin{equation}
\label{ap.B.general.G.tau.2}
G(z+\tau;\tau) = (2\pi)^{\frac{\tau-1}{2}} \tau^{-z+\frac{1}{2}} \Gamma(z) G(z;\tau) .
\end{equation}
Using \eqref{ap.B.general.G.tau} recursively, we obtain
\begin{equation}
\label{ap.B.general.G.tau.n}
G(z+k;\tau) = G(z,\tau) \prod_{j=0}^{k-1} \Gamma[(z+j)/\tau] .
\end{equation}

The Stirling formula for $G(z;\tau)$ is \cite{Alexanian}
\begin{equation}
\label{ap.B.Stirling.G.tau}
\log{G(z;\tau)} = \left[a_2(\tau) z^2 + a_1(\tau)z + a_0(\tau)\right] \log{z} +
b_2(\tau)z^2 + b_1(\tau) z + b_0(\tau) + \mathcal{O}(z^{-1}) ,
\end{equation}
where
\begin{equation}\notag
\begin{split}
& a_0(\tau) = \frac{\tau}{12} + \frac{1}{4} + \frac{1}{12\tau} , \\
& a_1(\tau) = -\frac{1}{2}\left(1+\frac{1}{\tau}\right) , \\
& a_2(\tau) = \frac{1}{2\tau} ,
\end{split}
\end{equation}
and
\begin{equation}\notag
\begin{split}
& b_2(\tau) = -\frac{1}{2\tau}\left(\frac{3}{2} + \log\tau\right) , \\
& b_1(\tau) = \frac{1}{2}\left(\left(1+\frac{1}{\tau}\right)(1+\log\tau) + \log{2\pi}\right) , \\
& b_0(\tau) = \frac{1}{3}\big\{\log\left[G^2(1/2;\tau)G(\tau;2\tau)\right] \\
& \phantom{b_0(\tau) = \frac{1}{3}\big\{} -
\frac{1+\tau}{2}\log{2\pi} - a_0(\tau)\log{(\tau^3/2)} - \log{2}\big\} .
\end{split}
\end{equation}

An integral representation for $G(z;\tau)$ was provided in \cite{lawrie}, that is,
\begin{equation}
\label{double.gamma.int.rep}
\begin{aligned}
\ln G(z;\tau) & = \int_0^1 \bigg[ \frac{r^{z-1}}{(r-1)(r^\tau-1)} -
\frac{z^2}{2\tau} r^{\tau-1} - z r^{\tau-1}\left(\frac{2-r^\tau}{r^\tau-1}-\frac{1}{2\tau}\right) \\
& -r^{\tau-1} + \frac{1}{r-1} - \frac{r^{\tau-1}}{(r-1)(r^\tau-1)}\bigg] \frac{dr}{\ln{r}} .
\end{aligned}
\end{equation}
It is convergent for $\operatorname{Re}z > 0$ and $\tau > 0$.

More details about the double gamma function can be found in \cite{Alexanian}
and \cite{SEFC2nd}.

\subsection{Kilbas-Saigo function}
\label{sub_sec_KS}

 The KS function is defined as (see \cite{Simon,KS.book})
 \begin{equation}
\label{KS_function}
\operatorname{E}_{a,m,l}(z): = 1  + \sum_{n=1}^\infty \left( \prod_{k=1}^n \frac{\Gamma(1+a((k-1)m+l))}{\Gamma(1+a((k-1)m+l+1))}\right) z^n=:  1  + \sum_{n=1}^\infty c_n z^n,
\end{equation}
for $a, m > 0$, $l > -1/a$ and $a(km+l) \neq -1,-2,...$, for any $k\geq 1$,
and $z\in \mathbb{C}$. The KS is an entire function.

Using eq.\eqref{ap.B.general.G.tau.n} we can
 rewrite the coefficients of the KS function
in terms of the double gamma function $G(z;\tau)$.
We obtain:
\begin{equation*}
	\label{coeff.KS.G.pochhammer}
	c_n
    = \frac{G(\varphi + a\tau;\tau)}{G(\varphi ;\tau)} \, \frac{G(\varphi   + n;\tau)}{G(\varphi + a\tau+ n;\tau)} ,
\end{equation*}
where, for $n=1,2,\ldots$,
\begin{equation}
	\label{def.varphi}
	\varphi = (1+al)\tau
\end{equation}
and
\begin{equation}
	\label{exp.tau}
	\tau = \frac{1}{am}.
\end{equation}
Note that this expression also works for $n=0$.
Thus we can write
\begin{equation}
	\label{def.KS.G.pochhammer}
	\operatorname{E}_{a,m,l}(z) = \frac{G(\varphi + a\tau;\tau)}{G(\varphi ;\tau)} \sum_{n=0}^\infty  \frac{G(\varphi   + n;\tau)}{G(\varphi + a\tau+ n;\tau)} z^n .
\end{equation}

Some interesting particular cases of the KS are
\begin{equation}
\operatorname{E}_{a,1,0}(z) = \sum_{n=0}^\infty \frac{z^n}{\Gamma(n a + 1)} = \operatorname{E}_a(z) ,
\end{equation}
which is the one-parameter Mittag-Leffler function, and
\begin{equation}
\label{ML_2_param}
\operatorname{E}_{a,1,(b-1)/a}(z) = \Gamma(b) \sum_{n=0}^\infty
\frac{z^n}{\Gamma(a n + b)} = \Gamma(b) \operatorname{E}_{a,b}(z) ,
\end{equation}
where $\operatorname{E}_{a,b}(z)$ is the two-parameter Mittag-Leffler function,
with $a,b > 0$ and $z\in \mathbb{C}$.

One important property of the KS function is the complete monotonicity
of $f(x) = \operatorname{E}_{a,m,l}(-x)$ for $\alpha \in (0,1]$,
$l \geq m -1/a$ and $x\geq 0$ \cite{Simon}.  On the other hand, it is known (see \cite{SCH}, Theorem 3.7) that if $f(\cdot)$ is a completely monotone function and $g(\cdot)$ is a Bernstein function, then $f(g(\cdot))$ is also completely monotone. Since the function $g(x) = \lambda x^\rho$, for $0\leq \rho \leq 1$ and $\lambda > 0$, is a Bernstein function \cite{SCH}, then $f(g(x)) = \operatorname{E}_{a,m,l}(-\lambda x^\rho)$ is completely monotone in this range of parameters. For $\rho > 1$, however, the function $g(\cdot)$ ceases to be a Bernstein function, and so $f(g(\cdot))$ is not completely monotone when $\rho > 1$. Nevertheless,
the function $f(g(\cdot)) $  remains monotonically decreasing. In fact, since $f^\prime(y) < 0$ for $y > 0$, then $f^\prime(\lambda x^\rho) < 0$ for $x>0$, and so
$ [f(g(x))]^\prime = f^\prime(g(x)) \rho x^{\rho -1} < 0$
for $x > 0$.

A useful result is \cite{Simon}
\begin{equation}
\frac{1}{1+\Gamma(1-a)x} \leq \operatorname{E}_{a,m,m-1}(-x) \leq
\frac{1}{1+\frac{\Gamma(1 + a(m-1)}{\Gamma(1+am)}x} ,
\end{equation}
where $a\in [0,1]$, $m>0$ and $x\geq 0$.

\subsection{Fibonacci polynomials}

The Fibonacci polynomials $F_n(x)$ ($n=0,1,2,\ldots$) are defined by the recursive relation
\begin{equation}
\label{fibo.1}
    F_{n+2}(x) = x F_{n+1}(x) + F_n(x), \qquad F_0(x) = 0 , \quad F_1(x) = 1 .
\end{equation}
An alternative expression is
\begin{equation}
\label{fibo.2}
F_n(x) = \frac{[\mu(x)]^n - [\nu(x)]^n}{\mu(x) - \nu(x)} ,
\end{equation}
where
\begin{equation}
\label{mu.nu}
\mu(x) = \frac{x+\sqrt{x^2+4}}{2} , \qquad
\nu(x) = \frac{x-\sqrt{x^2 +4}}{2}
\end{equation}
(see \cite{Fibonacci})

The bivariate
Fibonacci polynomials $u_n(x,y)$ are defined by \cite{Hoggatt} as
\begin{equation}
\label{bivariate.fib}
u_{n+2}(x,y) = x u_{n+1}(x,y) + y u_n(x,y) , \qquad u_0(x,y) = 0, \quad u_1(x,y) = 1.
\end{equation}
An alternative expression is
\begin{equation}
u_n(x,y) = \frac{[\bar{\mu}(x,y)]^n - [\bar{\nu}(x,y)]^n}{\bar{\mu}(x,y)-\bar{\nu}(x,y)}
\end{equation}
where
\begin{equation}
\label{bar.mu.nu}
\bar{\mu}(x,y) = \frac{x+\sqrt{x^2+4y}}{2} , \qquad
\bar{\nu}(x,y) = \frac{x-\sqrt{x^2 +4y}}{2} .
\end{equation}
Note that we have
\begin{equation}
\label{fib.aux}
u_n(x,y) = y^{(n-1)/2} F_n(x/\sqrt{y}) .
\end{equation}

\section{A first-order equation involving $\mathcal{D}^{(\alpha,\gamma)}_t$}     \label{sec1}
We recall that
\begin{equation*}
{\sideset{_{\scriptscriptstyle C}^{}}{_t^{(\alpha)}}{\operatorname{\mathcal{D}}}} t^\beta  =
\begin{cases}
{\displaystyle \frac{\Gamma(\beta+1)}{\Gamma(\beta-\alpha+1)} t^{\beta-\alpha}} , \; & \beta \neq 0 , \\[1ex]
0 , & \beta = 0 ,
\end{cases}
\end{equation*}
(see \cite{KIL}) and thus
\begin{equation}\label{dag}
\operatorname{\mathcal{D}}^{(\alpha,\gamma)}_t  \, t^\beta
=
\begin{cases}
{\displaystyle \frac{\Gamma(\beta+1)}{\Gamma(\beta-\alpha+1)} t^{\beta-(\alpha+\gamma)}} , \; & \beta \neq 0 , \\[1ex]
0 , & \beta = 0 .
\end{cases}
\end{equation}

Let us now consider the fractional differential equation
\begin{equation}
\label{fde.1}
\operatorname{\mathcal{D}}^{(\alpha,\gamma)}_t f(t) + \kappa f(t) = 0, \qquad t > 0,
\end{equation}
with initial condition $f(0) = f_0$,
for $\kappa \in \mathbb{R}^+$,  $\alpha \in (0,1)$ and $\alpha + \gamma > 0 $, which can be used as a model for anomalous relaxation (see \cite{Capelas1,Capelas2}).

\medskip

\paragraph{Preliminary results}

 We look for solutions of eq.(\ref{fde.1}) of the following form
\begin{equation}
\label{eq.3}
f(t) = \sum_{n=0}^\infty f_n t^{(\alpha+\gamma)n} .
\end{equation}

Let us introduce the compact notation
\begin{equation}
\label{def.n.beta.alpha}
[ \mathsf{n}]^\beta_\alpha := \frac{\Gamma(\beta n + 1)}{\Gamma(\beta n - \alpha + 1)}
\end{equation}
and
\begin{equation*}
[ {\mathsf{n}_1 \times  \mathsf{n}_2 \times  \cdots \times  \mathsf{n}_k} ]^\beta_\alpha :=
[ \mathsf{n}_1 ]^\beta_\alpha[ \mathsf{n}_2 ]^\beta_\alpha\cdots
[ \mathsf{n}_k ]^\beta_\alpha.
\end{equation*}
Moreover, let
\begin{equation}
\label{def.n.beta.alpha.factorial}
[\mathsf{n!}]^\beta_\alpha := [\mathsf{n}\times (\mathsf{n-1})\times \cdots \times \mathsf{1}]^\beta_\alpha .
\end{equation}
Thus we have that
\begin{equation}
\label{der.order.1}
\left(\operatorname{\mathcal{D}}^{(\alpha,\gamma)}_t\right) f(t) = \sum_{n=0}^\infty
f_{n+1}[ \mathsf{n+1}]^{\alpha+\gamma}_\alpha t^{(\alpha+\gamma)n}
\end{equation}
and, from eq.(\ref{fde.1}),
\begin{equation}\label{derdernow}
f_{n+1}[ \mathsf{n+1}]^{\alpha+\gamma}_\alpha + \kappa f_n = 0 , \qquad n=0,1,2,\ldots .
\end{equation}
Thus the solution reads
\begin{equation}
\label{sol.fde1}
f(t) = \sum_{n=0}^\infty \frac{(-\kappa t^{\alpha+\gamma})^n}{[\mathsf{n!}]^{\alpha+\gamma}_\alpha} f_0 ,
\end{equation}
where we define $[\mathsf{0!}]^{\alpha+\gamma}_\alpha = 1$. This corresponds to a KS function with
\begin{equation}
a = \alpha, \qquad m = 1 + \frac{\gamma}{\alpha} , \qquad l = \frac{\gamma}{\alpha} , \notag
\end{equation}
that is
\begin{equation}\label{KS}
f(t) = f_0 \, \operatorname{E}_{\alpha,1+\gamma/\alpha,\gamma/\alpha}\left(-\kappa t^{\alpha+\gamma}\right).
\end{equation}

The uniform and absolute convergence of the series in \eqref{sol.fde1} as well as the uniqueness of the solution of eq.\eqref{fde.1}, in the space of absolutely continuous functions, are discussed in Appendix~\ref{appendix.A}.

\paragraph{Particular case}  When $\gamma = 0$, we have that $\mathcal{D}^{(\alpha,\gamma)}_t =
{\sideset{_{\scriptscriptstyle C}^{}}{_t^{(\alpha)}}{\operatorname{\mathcal{D}}}}$ and
\begin{equation*}
[\mathsf{n}]^{\alpha+\gamma}_\alpha = [\mathsf{n}]_\alpha^\alpha =
\frac{\Gamma(\alpha n+1)}{\Gamma(\alpha(n-1)+1)} ,
\end{equation*}
so that $[\mathsf{n!}]^{\alpha}_\alpha = \Gamma(\alpha n + 1).$
Thus
\begin{equation}
\operatorname{E}_{\alpha,1,0}\left(-\kappa t^{\alpha}\right) =
\sum_{n=0}^\infty \frac{(-\kappa t^{\alpha})^n}{\Gamma(\alpha n + 1)} =: \operatorname{E}_\alpha(-\kappa
t^\alpha) , \label{fpp}
\end{equation}
where $\operatorname{E}_\alpha(\cdot)$ is the Mittag-Leffler function; this agrees with the result given in \cite{MAI}, where it is shown that the function \eqref{fpp} satisfies the time-fractional relaxation equation (with the standard Caputo derivative).
 \medskip

\subsection{First renewal model}

We start by recalling the following probabilistic representation of the Kilbas-Saigo function (see \cite{BOU}), for $\alpha \in (0,1)$ and $\alpha + \gamma > 0$:
\begin{equation}
\operatorname{E}_{\alpha,1+\gamma/\alpha,\gamma/\alpha}\left(-\lambda t^{\alpha+\gamma}\right)=\mathbb{E}e^{-\lambda t^{\alpha+\gamma}\int_{0}^\infty (1-\mathcal{A}_\alpha(s))_+^{\gamma}ds}, \qquad \lambda>0, t \geq 0, \label{LTZ}
\end{equation}
where $\mathcal{A}_\alpha:=\left\{ \mathcal{A}_\alpha(t)\right\}_{t \geq 0}$ is the $\alpha$-stable subordinator, i.e. a non-decreasing L\'{e}vy process with $\mathbb{E}e^{-\kappa\mathcal{A}_\alpha(t)}=e^{-t\kappa^\alpha }$, for $t \geq 0$, $\kappa >0$ and $\alpha \in (0,1)$.  Thus, we can represent the solution to equation \eqref{fde.1}, for $\kappa=\lambda$ and $f_0=1,$ as
\begin{equation}
f(t)=\mathbb{E}e^{-\lambda \mathcal{Z}_{\alpha,\gamma}(t)},  \qquad \lambda>0, t \geq 0, \label{ff}
\end{equation}
where the random process $\left\{\mathcal{Z}_{\alpha,\gamma}(t) \right\}_{t \geq 0}$ is defined as \begin{equation}\mathcal{Z}_{\alpha,\gamma}(t):=t^{\alpha+\gamma}Z_{\alpha,\gamma}, \qquad t \geq0, \label{defz} \end{equation}
for the r.v. $Z_{\alpha,\gamma}:=\int_{0}^\infty (1-\mathcal{A}_\alpha(s))_+^{\gamma}ds$. Moreover, the following equality in distribution is proved in \cite{BOU} for $Z_{\alpha,\gamma}$, in terms of an infinite independent product of Beta r.v.'s
$B(a,b)$:

\begin{equation}
Z_{\alpha,\gamma} \overset{d}{=}\frac{\Gamma(\gamma+1)}{\Gamma(\alpha+\gamma+1)}\prod_{n=0}^\infty \frac{\gamma +n+1}{\alpha+\gamma+n}B\left(1+\frac{n}{\alpha+\gamma},\frac{1-\alpha}{\alpha+\gamma}\right). \label{beta}\end{equation}
Details on the a.s. convergence of the infinite product in \eqref{beta} (and of its Mellin transform) can be found in \cite{LET}.

\begin{remark}
Let us recall the definition of the right-continuous inverse (or first-passage time) of the stable subordinator, which we denote by $\mathcal{L}_\alpha:=\left\{  \mathcal{L}_\alpha(t)\right\}_{t \geq 0}$, i.e.
\[
\mathcal{L}_\alpha(t):=\inf\left\{s>0:\mathcal{A}_\alpha(s)>t \right\}, \qquad t\geq 0, \alpha \in (0,1).
\]
Then, it is well-known that the Laplace transform (hereafter LT) of its density reads $\mathbb{E}e^{-\lambda \mathcal{L}_\alpha(t)}=E_{\alpha, 1}(-\lambda t^\alpha)$, which coincides with $\mathbb{E}e^{-\lambda t^{\alpha}\mathcal{L}_\alpha(1)}$, by the self-similarity property of $\mathcal{L}_{\alpha}$ (see \cite{MEE2}). Thus, we can conclude that, in the special case where $\gamma=0$ and $\alpha \in (0,1)$, the following equality in distribution holds: $Z_{\alpha,0} \overset{d}{=}\mathcal{L}_{\alpha}(1)$.
\end{remark}
Then, we give the following definition.
\begin{definition}\label{defren}
    Let $\alpha \in (0,1)$, $\alpha + \gamma > 0$ and let $\mathcal{N}_{\alpha, \gamma}:=\left\{\mathcal{N}_{\alpha,\gamma}(t) \right\}_{t \geq 0}$ be the renewal process with i.i.d. interarrival times $U^{(\alpha, \gamma)}_{j}$, $j=1,2,...$, satisfying \eqref{fde.1} with $\kappa=\lambda >0$ and $f_0=1$, i.e. with survival probability
    \begin{equation}\label{defdef}
     \mathbb{P}(U^{(\alpha, \gamma)}>t)=\operatorname{E}_{\alpha,1+\gamma/\alpha,\gamma/\alpha}\left(-\lambda t^{\alpha+\gamma}\right),
    \end{equation}
    by \eqref{KS}. Then we denote, for any $t \geq 0,$ \[\mathcal{N}_{\alpha, \gamma}(t):=\sup \left\{ n \geq 1: T_n^{(\alpha,\gamma)} \leq t \right\}=\sum_{n=0}^\infty n\mathbbm{1}_{\left\{T_n^{(\alpha,\gamma)} \leq t < T_{n+1}^{(\alpha,\gamma)}\right\}},\] where
$T_n^{(\alpha,\gamma)}:=\sum_{j=1}^n U^{(\alpha, \gamma)}_{j}$ and $\mathbbm{1}_{\left\{\cdot\right\}}$ is the indicator function.

    \end{definition}

As seen in Section~\ref{sub_sec_KS}, the function $f(x):=\operatorname{E}_{\alpha,1+\gamma/\alpha,\gamma/\alpha}(-x)$ for $x>0$
is CM when $\alpha + \gamma \leq 1$, but not for $\alpha + \gamma > 1$, although it still decreases monotonically for $\alpha + \gamma > 1$.
In both cases, we have that $ \mathbb{P}(U^{(\alpha, \gamma)}>0)=1$ and $\lim_{t \to +\infty} \mathbb{P}(U^{(\alpha, \gamma)}>t)=0$,
by referring to the asymptotic expansion of the KS function:
\begin{equation}\label{exp}
\operatorname{E}_{a,m,l}(-z)\sim  \frac{\Gamma(1+a(l-m+1))}{\Gamma(1+a(l-m))}  z^{-1},
\end{equation}
as $z \to + \infty$, proved in \cite{Simon}, Proposition 6.

Thus the function in \eqref{defdef} is a proper survival probability (see also \cite{Simon}, Theorem 1.1) and the process $\mathcal{N}_{\alpha, \gamma}$ is always well defined, according to Def. \ref{defren}.

\begin{theorem}\label{rem2.3}
Let $\alpha \in (0,1)$ and $\alpha + \gamma > 0$. Then for  $\alpha+\gamma  \leq 1$, the expected value of the interarrival time $U^{(\alpha, \gamma)}$ of the process $\mathcal{N}_{\alpha,\gamma}$ is infinite, whereas, for $\alpha+\gamma>1$, it is finite and reads
 \begin{equation}
 \label{eu}\mathbb{E}U^{(\alpha, \gamma)}= \frac{1}{\lambda^{1/(\alpha+\gamma)}(\alpha+\gamma-1)\Gamma(1-\alpha)}.
 \end{equation}

 \end{theorem}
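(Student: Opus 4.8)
The plan is to use the elementary identity $\mathbb{E}U^{(\alpha,\gamma)}=\int_0^\infty \mathbb{P}(U^{(\alpha,\gamma)}>t)\,dt=\int_0^\infty \operatorname{E}_{\alpha,1+\gamma/\alpha,\gamma/\alpha}(-\lambda t^{\alpha+\gamma})\,dt$ (valid since $U^{(\alpha,\gamma)}\ge 0$), and then to decide convergence of this integral and, in the convergent regime, to evaluate it. Writing $\beta:=\alpha+\gamma$ and $S(t):=\operatorname{E}_{\alpha,1+\gamma/\alpha,\gamma/\alpha}(-\lambda t^{\beta})$, the integrand is continuous with $S(0)=1$, so there is never an issue on $[0,1]$; the whole question is the behaviour of $S$ at infinity.

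For the dichotomy I would read off the tail from the asymptotic expansion \eqref{exp}. With $a=\alpha$, $m=1+\gamma/\alpha$, $l=\gamma/\alpha$ one has $l-m+1=0$ and $l-m=-1$, so \eqref{exp} gives $S(t)\sim \Gamma(1-\alpha)^{-1}(\lambda t^{\beta})^{-1}=\frac{1}{\lambda\Gamma(1-\alpha)}\,t^{-\beta}$ as $t\to\infty$. Hence the integrand is comparable to $t^{-\beta}$ at infinity, and $\int^\infty t^{-\beta}\,dt$ converges precisely when $\beta>1$ and diverges when $\beta\le 1$; this gives the infinite-mean regime for $\alpha+\gamma\le 1$ and finiteness for $\alpha+\gamma>1$. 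A fully rigorous version of the same comparison uses the two-sided bound for $\operatorname{E}_{a,m,m-1}(-x)$ recalled in Section~\ref{sub_sec_KS} (note that $l=m-1$ here): it sandwiches $S(t)$ between $\bigl(1+\Gamma(1-\alpha)\lambda t^{\beta}\bigr)^{-1}$ and $\bigl(1+\tfrac{\Gamma(1+\gamma)}{\Gamma(1+\beta)}\lambda t^{\beta}\bigr)^{-1}$, both integrable on $(0,\infty)$ iff $\beta>1$, avoiding any delicate handling of the asymptotics.

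For the exact value in the case $\beta>1$ I would pass to the probabilistic representation \eqref{LTZ}--\eqref{defz}, i.e. $S(t)=\mathbb{E}\,e^{-\lambda t^{\beta}Z_{\alpha,\gamma}}$, and use Tonelli (everything nonnegative) to interchange expectation and integral, so that $\mathbb{E}U^{(\alpha,\gamma)}=\mathbb{E}\int_0^\infty e^{-\lambda t^{\beta}Z_{\alpha,\gamma}}\,dt$. The inner integral is a Gamma integral: substituting $u=\lambda t^{\beta}z$ gives $\int_0^\infty e^{-\lambda t^{\beta}z}\,dt=\Gamma\!\bigl(1+\tfrac1\beta\bigr)(\lambda z)^{-1/\beta}$, whence $\mathbb{E}U^{(\alpha,\gamma)}=\lambda^{-1/\beta}\,\Gamma\!\bigl(1+\tfrac1\beta\bigr)\,\mathbb{E}\bigl[Z_{\alpha,\gamma}^{-1/\beta}\bigr]$ (equivalently, $\tfrac1\beta\lambda^{-1/\beta}$ times the Mellin transform of the KS function at $s=1/\beta$). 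The remaining task is to compute the negative moment $\mathbb{E}[Z_{\alpha,\gamma}^{-1/\beta}]$, which I would extract from the infinite Beta-product \eqref{beta}: each factor contributes a ratio of Gamma functions whose arguments, at the special order $s=1/\beta$, shift by integer multiples of $1/\beta$, so that the product telescopes and leaves a single boundary limit, to be resolved by Stirling's formula together with the deterministic prefactor $\Gamma(1+\gamma)/\Gamma(1+\beta)$.

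The \emph{main obstacle} is precisely this last evaluation: pinning down the constant in \eqref{eu} exactly. The telescoping of \eqref{beta} produces a $0\cdot\infty$-type limit in which the $N$-dependent Gamma ratios and the explicit product $\prod(\gamma+n+1)/(\beta+n)$ combine to a finite constant only after a careful Stirling expansion, so the bookkeeping of Gamma factors is where the real work lies. I would treat this as the crux and cross-check the resulting constant against the two-sided bounds of Section~\ref{sub_sec_KS}; in particular one must keep track of the full survival function rather than only its leading tail $\frac{1}{\lambda\Gamma(1-\alpha)}t^{-\beta}$, since the contribution from moderate $t$ and the subleading corrections also enter the final constant and integrating the leading tail alone would undercount the mean.
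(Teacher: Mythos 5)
Your first half is sound, and in fact more solid than the paper's own treatment of the dichotomy: the identity $\mathbb{E}U^{(\alpha,\gamma)}=\int_0^\infty\mathbb{P}(U^{(\alpha,\gamma)}>t)\,dt$, the tail read off from \eqref{exp}, and above all the sandwich $\bigl(1+\Gamma(1-\alpha)\lambda t^{\beta}\bigr)^{-1}\leq \operatorname{E}_{\alpha,1+\gamma/\alpha,\gamma/\alpha}(-\lambda t^{\beta})\leq\bigl(1+\tfrac{\Gamma(1+\gamma)}{\Gamma(1+\beta)}\lambda t^{\beta}\bigr)^{-1}$ (with $\beta:=\alpha+\gamma$) settle the infinite/finite alternative rigorously, whereas the paper extracts both halves from the Mellin asymptotics \eqref{ut2} via a change of variables and a comparison inequality that is only sketched. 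The reduction $\mathbb{E}U^{(\alpha,\gamma)}=\Gamma(1+1/\beta)\,\lambda^{-1/\beta}\,\mathbb{E}\bigl[Z_{\alpha,\gamma}^{-1/\beta}\bigr]$ via \eqref{LTZ}--\eqref{defz} and Tonelli is also correct. The genuine gap is that the quantitative content of the theorem --- the closed form \eqref{eu} --- is never derived: you stop exactly at the evaluation of $\mathbb{E}[Z_{\alpha,\gamma}^{-1/\beta}]$, which you yourself identify as the crux. As written, the proposal proves the dichotomy and a restatement of the mean, not \eqref{eu}.

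Closing that gap along your own lines is actually straightforward, and it is worth recording what it gives, because the outcome disagrees with \eqref{eu}. At $s=-1/\beta$ each Beta factor $B_n:=B\bigl(1+\tfrac{n}{\beta},\tfrac{1-\alpha}{\beta}\bigr)$ in \eqref{beta} contributes
\begin{equation*}
\mathbb{E}\bigl[B_n^{-1/\beta}\bigr]=\frac{\Gamma\bigl(1+\tfrac{n-1}{\beta}\bigr)\,\Gamma\bigl(1+\tfrac{n+1-\alpha}{\beta}\bigr)}{\Gamma\bigl(1+\tfrac{n}{\beta}\bigr)\,\Gamma\bigl(1+\tfrac{n-\alpha}{\beta}\bigr)},
\end{equation*}
so both Gamma ratios telescope, the scalar factors $\bigl(\tfrac{\gamma+n+1}{\beta+n}\bigr)^{-1/\beta}$ telescope as Gamma ratios as well, and Stirling resolves the boundary terms, giving $\mathbb{E}[Z_{\alpha,\gamma}^{-1/\beta}]=\beta^{\alpha/\beta}\,\Gamma(1-1/\beta)/\Gamma(1-\alpha/\beta)$ and hence
\begin{equation*}
\mathbb{E}U^{(\alpha,\gamma)}=\frac{\beta^{\alpha/\beta}\,\Gamma\bigl(1+\tfrac{1}{\beta}\bigr)\,\Gamma\bigl(1-\tfrac{1}{\beta}\bigr)}{\lambda^{1/\beta}\,\Gamma\bigl(1-\tfrac{\alpha}{\beta}\bigr)};
\end{equation*}
the exact Mellin--Barnes representation \eqref{KS.mellin}, combined with the functional equations of the double gamma function, yields the same value. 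This is not \eqref{eu}, and your own suggested cross-check already rules \eqref{eu} out: for $\alpha=1/2$, $\gamma=3/2$, $\lambda=1$, integrating the lower sandwich bound gives $\mathbb{E}U^{(\alpha,\gamma)}\geq\int_0^\infty(1+\Gamma(1/2)\,t^2)^{-1}dt=\pi/(2\sqrt{\Gamma(1/2)})\approx 1.18$, while \eqref{eu} claims $1/\Gamma(1/2)\approx 0.56$ (the display above gives $\approx 1.52$). The source of the discrepancy is the step in the paper's proof that applies \eqref{ut2} --- an asymptotic relation valid only as the Mellin variable tends to $1$ --- at the point $s/(\alpha+\gamma)\to 1/(\alpha+\gamma)$, which is bounded away from $1$ precisely when $\alpha+\gamma>1$; what \eqref{eu} captures is the blow-up of the mean as $\alpha+\gamma\downarrow 1$, not its exact value. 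So your strategy is the right one, but by deferring the telescoping you missed both the completion of the proof and the fact that the constant in the statement cannot stand as written.
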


 \begin{proof}
    We recall the asymptotic behavior of the Mellin transform of the KS function as
proved in \cite{Simon}:
 \begin{equation}\label{ut2}
\int_0^{+\infty}\operatorname{E}_{a,m,l}(-x)x^{s-1}dx \sim \frac{\Gamma(1+a(l-m+1))}{\Gamma(1+a(l-m))(1-s)}, \qquad s \uparrow 1,
 \end{equation}
 for $a  \in (0,1)$, $m>0$ and $l>m-1/a$.

Thus, for $s \in (0,1)$, we can write that
 \begin{eqnarray}
\int_0^{+\infty}\mathbb{P}(U^{(\alpha, \gamma)}>t)t^{s-1}dt &=&\frac{1}{(\alpha+\gamma)\lambda^{1/(\alpha+\gamma)}}\int_0^{+\infty}\operatorname{E}_{\alpha,1+\gamma/\alpha,\gamma/\alpha}(-x)x^{\frac{s}{\alpha+\gamma}-1}dx \notag \\
&\geq& \frac{1}{(\alpha+\gamma)\lambda^{1/(\alpha+\gamma)}}\int_0^{+\infty}\operatorname{E}_{\alpha,1+\gamma/\alpha,\gamma/\alpha}(-x)x^{s-1}dx \notag \\
&\sim& \frac{1}{(\alpha+\gamma)\lambda^{1/(\alpha+\gamma)}(1-s)\Gamma(1-\alpha)}, \qquad s \uparrow 1, \notag
\end{eqnarray}
 where the inequality holds only for $\alpha+\gamma<1$. As a consequence, by considering that $\mathbb{E}U^{(\alpha, \gamma)}= \lim_{s \uparrow 1}\int_0^{+\infty}\mathbb{P}(U^{(\alpha, \gamma)}>t)t^{s-1}dt$, the result follows for $\alpha+\gamma<1$.
 In the other case, for $\alpha+\gamma >1,$ we have, instead, that $s/(\alpha+\gamma)<1$, so that we get
 \[
 \mathbb{E}U^{(\alpha, \gamma)} =\lim_{s \uparrow 1} \frac{1}{\lambda^{1/(\alpha+\gamma)}(\alpha+\gamma-s)\Gamma(1-\alpha)} < \infty
 \]
and coincides with \eqref{eu}.

When $\alpha + \gamma = 1$ we have
 \begin{eqnarray}
\int_0^{+\infty}\mathbb{P}(U^{(\alpha, \gamma)}>t)t^{s-1}dt &=&\frac{1}{\lambda}\int_0^{+\infty}\operatorname{E}_{\alpha,1+\gamma/\alpha,\gamma/\alpha}(-x)x^{s-1}dx \notag \\
&\sim& \frac{1}{\lambda (1-s)\Gamma(1-\alpha)}, \qquad s \uparrow 1. \notag
\end{eqnarray}
 \end{proof}

\begin{remark}\label{pro}
We show that our model does not coincide with the process introduced and studied by Laskin in \cite{LAS}, which will be indicated by $\mathcal{N}^L_{\alpha,\gamma}$. Moreover, since they share the distribution of the first arrival's waiting time, this proves that the model defined in \cite{LAS} is not a renewal process.
As a consequence of Def. \ref{defren}, and by considering that the following relationship holds for any renewal process
\begin{equation}\label{reln}
\left\{\mathcal{N}_{\alpha,\gamma}(t) \geq n\right\}=\left\{T^{(\alpha,\gamma)}_n \leq t\right\}=\left\{\sum_{j=1}^n U_j^{(\alpha,\gamma)}\leq t\right\},
\end{equation}
we can show that the p.m.f of $\mathcal{N}_{\alpha, \gamma}$, i.e., $\left\{p_{\alpha, \gamma}(n;t)  \right\}_{n \in \mathbb{N}_0}$, with $p_{\alpha, \gamma}(n;t):=\mathbb{P}(\mathcal{N}_{\alpha, \gamma}=n)$, does not satisfy the equation studied in \cite{LAS} and recalled in \eqref{las}.

Indeed, if we denote by $(f*g)(t):=\int^t_0 f(z)g(t-z)dz$ the convolution of two functions $f,g:\mathbb{R}^+ \to \mathbb{R}$, 
we easily obtain from \eqref{reln} that, for any $n \in \mathbb{N}$ and $t \geq 0,$ the p.m.f. of a renewal process satisfies
\begin{equation}\label{psi}
p_{\alpha,\gamma}(n;t)=(f_{T^{(\alpha, \gamma)}_n}*\Psi)(t), \qquad n \in \mathbb{N}, t \geq 0,
\end{equation}
where $\Psi(t):=\mathbb{P}(U^{(\alpha,\gamma)}>t)$ and $f_{T^{(\alpha, \gamma)}_n}(\cdot)$ is the density function of the $n$-th event's waiting time. Therefore, we have that
\begin{eqnarray}
&& t^{-\gamma} {\sideset{_{\scriptscriptstyle C}^{}}{_t^{(\alpha)}}{\operatorname{\mathcal{D}}}}\int_0^t f_{T^{(\alpha, \gamma)}_n}(z)\Psi(t-z)dz  \notag \\
&=& \frac{t^{-\gamma}\Psi(0)}{\Gamma(1-\alpha)}\int_0^t \frac{d\tau}{(t-\tau)^\alpha}f_{T^{(\alpha, \gamma)}_n}(\tau)+\frac{t^{-\gamma}}{\Gamma(1-\alpha)}\int_0^t \frac{d\tau}{(t-\tau)^\alpha}\int_0^\tau f_{T^{(\alpha, \gamma)}_n}(z) \frac{d}{d\tau}\Psi(\tau-z)dz \notag \\
&=:& I+II. \notag
\end{eqnarray}
Then by Def. \ref{defren} and by considering that $f_{U^{(\alpha,\gamma)}}(t)=-\Psi'(t),$ we can write
\begin{eqnarray}
I&=&\frac{t^{-\gamma}}{\Gamma(1-\alpha)}\int_0^t \frac{d\tau}{(t-\tau)^\alpha}(f_{T^{(\alpha, \gamma)}_{n-1}}*f_{U^{(\alpha,\gamma)}})(\tau) =-t^{-\gamma}\int_0^t  f_{T^{(\alpha, \gamma)}_{n-1}}(z) ({\sideset{_{\scriptscriptstyle C}^{}}{_{\cdot}^{(\alpha)}}{\operatorname{\mathcal{D}}}}\Psi)(t-z)dz\notag \\
&=&-t^{-\gamma}\int_0^t  f_{T^{(\alpha, \gamma)}_{n-1}}(z) (t-z)^\gamma(\operatorname{\mathcal{D}}^{(\alpha,\gamma)}_{\cdot} \Psi)(t-z)dz=\lambda t^{-\gamma}\int_0^t  f_{T^{(\alpha, \gamma)}_{n-1}}(z) (t-z)^\gamma \Psi(t-z)dz .\notag
\end{eqnarray}
Analogously, for the second term, we have that
\begin{eqnarray}
 II&=&\frac{t^{-\gamma}}{\Gamma(1-\alpha)}\int_0^t f_{T^{(\alpha, \gamma)}_n}(z) dz \int_0^{t-z} \frac{d\tau}{(t-z-w)^\alpha}  \frac{d}{dw}\Psi(w)dw
 \notag \\
&=&-\lambda t^{-\gamma}\int_0^t  f_{T^{(\alpha, \gamma)}_{n}}(z) (t-z)^\gamma \Psi(t-z)dz  \notag
\end{eqnarray}
and thus
\begin{eqnarray}I+II&=&\lambda t^{-\gamma}\int_0^t  \left[f_{T^{(\alpha, \gamma)}_{n-1}}(z)-f_{T^{(\alpha, \gamma)}_{n}}(z)\right] (t-z)^\gamma \Psi(t-z)dz \notag \\
&\neq&\lambda\left(\left[f_{T^{(\alpha, \gamma)}_{n-1}}-f_{T^{(\alpha, \gamma)}_{n}}\right]*\Psi\right)(t) =\lambda (p_{\alpha,\gamma}(n-1;t)-p_{\alpha,\gamma}(n;t)).\notag\end{eqnarray}

The conclusion follows by considering that $f_{T^{(\alpha, \gamma)}_{n-1}}-f_{T^{(\alpha, \gamma)}_{n}}$ is non-negative (since $T_n \geq T_{n-1}$ a.s.) and $\Psi(\cdot) \geq 0$, so that $$\int_0^t  \left[f_{T^{(\alpha, \gamma)}_{n-1}}(z)-f_{T^{(\alpha, \gamma)}_{n}}(z)\right] (1-z/t)^\gamma \Psi(t-z)dz \neq \left(\left[f_{T^{(\alpha, \gamma)}_{n-1}}-f_{T^{(\alpha, \gamma)}_{n}}\right]*\Psi\right)(t),$$ unless $\gamma=0.$

\end{remark}

\paragraph{Stochastic representation}

In order to derive a time-change characterization of the interarrival times of $\mathcal{N}_{\alpha,\gamma}$, we start by proving that the process $\mathcal{Z}_{\alpha, \gamma}:=\left\{\mathcal{Z}_{\alpha, \gamma}(t)\right\}_{t \geq 0}$,  defined in equation \eqref{defz}, is equal in distribution to the inverse of an almost-surely increasing process, which will be denoted as $\mathcal{A}_{\alpha, \gamma}:=\left\{\mathcal{A}_{\alpha, \gamma}(t)\right\}_{t \geq 0}$, starting a.s. from $0,$ and such that the following relationship
\begin{equation}\label{rel}
\mathbb{P}\left( \mathcal{A}_{\alpha,\gamma}(t)<x\right)=\mathbb{P}\left( \mathcal{Z}_{\alpha,\gamma}(x)>t\right)
\end{equation}
holds, for any $x ,t \geq 0$.

\begin{theorem}\label{thmsub}
The process $\mathcal{A}_{\alpha,\gamma}$ defined by \eqref{rel} is almost surely starting from $0$ and non-decreasing. Its transition density
$h_{\alpha,\gamma}(x,t):=\mathbb{P}\left( \mathcal{A}_{\alpha,\gamma}(t)\in dx\right)$ has the following LT, w.r.t. $t,$
\begin{equation}\label{lth}
    \Tilde{h}_{\alpha, \gamma}(x,\eta)=(\alpha+\gamma)x^{\alpha+\gamma-1}\sum_{l=0}^{\infty}(l+1)(-\eta)^lx^{l(\alpha+\gamma)}c_{l+1},
\end{equation}
where \begin{equation}
c_n:=\prod_{j=0}^{n-1}\frac{\Gamma(j(\gamma+\alpha)+\gamma+1)}{\Gamma((j+1)(\gamma+\alpha)+1)}
 =
\frac{1}{[\mathsf{n}!]_\alpha^{\alpha+\gamma}} , \qquad n=1,2,... \label{cn}
\end{equation}
\end{theorem}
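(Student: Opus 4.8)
The plan is to identify $\mathcal{A}_{\alpha,\gamma}$ with the pathwise first-passage (inverse) process of $\mathcal{Z}_{\alpha,\gamma}$ and then to reduce the Laplace transform of its transition density to a differentiated Kilbas--Saigo series. First I would observe that, by \eqref{defz}, for each realization the map $x\mapsto\mathcal{Z}_{\alpha,\gamma}(x)=x^{\alpha+\gamma}Z_{\alpha,\gamma}$ is continuous and strictly increasing on $[0,\infty)$ onto $[0,\infty)$, since $\alpha+\gamma>0$ and $Z_{\alpha,\gamma}\in(0,\infty)$ almost surely. Hence the inverse $\mathcal{A}_{\alpha,\gamma}(t):=\inf\{x\ge 0:\mathcal{Z}_{\alpha,\gamma}(x)>t\}$ is well defined pathwise, equals $t^{1/(\alpha+\gamma)}Z_{\alpha,\gamma}^{-1/(\alpha+\gamma)}$, is non-decreasing in $t$, and satisfies $\mathcal{A}_{\alpha,\gamma}(0)=0$ almost surely. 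The defining relation \eqref{rel} is then immediate, because for a continuous strictly increasing $\mathcal{Z}_{\alpha,\gamma}$ one has $\{\mathcal{A}_{\alpha,\gamma}(t)<x\}=\{\mathcal{Z}_{\alpha,\gamma}(x)>t\}$. This settles the qualitative part of the statement.

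For the Laplace transform I would work with the survival function $H(x,t):=\mathbb{P}(\mathcal{A}_{\alpha,\gamma}(t)<x)=\mathbb{P}(\mathcal{Z}_{\alpha,\gamma}(x)>t)$, whose $x$-derivative is precisely the transition density $h_{\alpha,\gamma}(x,t)$. Applying the elementary identity $\int_0^\infty e^{-\eta t}\mathbb{P}(Y>t)\,dt=\eta^{-1}\bigl(1-\mathbb{E}e^{-\eta Y}\bigr)$ with $Y=\mathcal{Z}_{\alpha,\gamma}(x)$, together with the representation \eqref{LTZ} (read with $\lambda\to\eta$, $t\to x$), gives the closed form
\[
\int_0^\infty e^{-\eta t}H(x,t)\,dt=\frac{1-\operatorname{E}_{\alpha,1+\gamma/\alpha,\gamma/\alpha}\!\left(-\eta x^{\alpha+\gamma}\right)}{\eta}.
\]
Differentiating in $x$ and interchanging $\partial_x$ with the $t$-integral then yields $\Tilde{h}_{\alpha,\gamma}(x,\eta)=-\eta^{-1}\,\partial_x\operatorname{E}_{\alpha,1+\gamma/\alpha,\gamma/\alpha}(-\eta x^{\alpha+\gamma})$.

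Finally I would expand the Kilbas--Saigo function by its defining series \eqref{KS_function} and \eqref{KS}, that is $\operatorname{E}_{\alpha,1+\gamma/\alpha,\gamma/\alpha}(-\eta x^{\alpha+\gamma})=\sum_{n\ge0}c_n(-\eta)^n x^{n(\alpha+\gamma)}$ with $c_n=1/[\mathsf{n}!]_\alpha^{\alpha+\gamma}$ as in \eqref{cn}, differentiate term by term, and re-index with $l=n-1$. The combination $-\eta^{-1}(-\eta)^{l+1}=(-\eta)^l$ together with $x^{(\alpha+\gamma)(l+1)-1}=x^{\alpha+\gamma-1}x^{l(\alpha+\gamma)}$ reproduces \eqref{lth} exactly. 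The only genuinely technical points are the justification of the term-by-term $x$-differentiation and of the exchange of $\partial_x$ with the Laplace integral; both follow from the entireness of the KS function, which gives locally uniform convergence of the power series and of its derivative on compact $x$-sets, so that dominated convergence applies for each fixed $\eta>0$. I expect this interchange to be the main (and essentially the only) obstacle, the reindexing being routine bookkeeping.
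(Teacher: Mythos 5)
Your proposal is correct and follows essentially the same route as the paper: identify $\mathcal{A}_{\alpha,\gamma}$ as the (non-decreasing, zero-starting) inverse of $\mathcal{Z}_{\alpha,\gamma}$, use the identity $\int_0^\infty e^{-\eta t}\mathbb{P}(Y>t)\,dt=\eta^{-1}\bigl(1-\mathbb{E}e^{-\eta Y}\bigr)$ together with \eqref{LTZ} to get $\Tilde{h}_{\alpha,\gamma}(x,\eta)=-\eta^{-1}\partial_x\operatorname{E}_{\alpha,1+\gamma/\alpha,\gamma/\alpha}\bigl(-\eta x^{\alpha+\gamma}\bigr)$, and expand the Kilbas--Saigo series term by term. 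The only cosmetic difference is that you justify the term-by-term differentiation by the entireness of the KS function, whereas the paper verifies convergence of the differentiated series directly via Gautschi's inequality and a ratio test; both justifications are valid.
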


\begin{proof}
As a consequence of \eqref{rel}, the process $\mathcal{A}_{\alpha,\gamma}$ starts a.s. from $0$, since it is defined as the first time over a certain level for $\mathcal{Z}_{\alpha,\gamma}$ and $\mathcal{Z}_{\alpha,\gamma}(0)=\left. t^{\alpha+\gamma}Z_{\alpha,\gamma} \right \vert_{t=0}=0$ a.s. Moreover, it is a.s. non-decreasing since it is the inverse of an a.s. non-decreasing process.

Finally, if we denote the transition density of $\mathcal{Z}_{\alpha,\gamma}$ by $f_{\alpha,\gamma}(x,t):=\mathbb{P}(\mathcal{Z}_{\alpha,\gamma}(t) \in dx)$, then the LT of the transition density $h_{\alpha,\gamma}(z,t)$, w.r.t. $t$, must satisfy the following equation
\begin{equation*}
    \int_0^x \Tilde{h}_{\alpha, \gamma}(z,\eta)dz=\frac{1}{\eta}-\frac{\Tilde{f}_{\alpha, \gamma}(\eta,x)}{\eta}, \qquad x \geq 0, \eta >0,
\end{equation*}
where $\Tilde{f}_{\alpha, \gamma}(\eta,x)=\int_0^\infty e^{-\eta z}f_{\alpha, \gamma}(z,x)dz$ and
$\Tilde{h}_{\alpha, \gamma}(z,\eta)=\int_0^\infty e^{-\eta t}h_{\alpha, \gamma}(z,t)dt$.
Thus, in our case, it must be
\begin{eqnarray}
\Tilde{h}_{\alpha, \gamma}(x,\eta)&=&-\frac{1}{\eta}\frac{d}{dx}\operatorname{E}_{\alpha,1+\gamma/\alpha,\gamma/\alpha}\left(-\eta x^{\alpha+\gamma}\right) \label{teo2} \\
&=&(\alpha+\gamma)x^{\alpha+\gamma-1}\sum_{l=0}^{\infty}(l+1)(-\eta)^lx^{l(\alpha+\gamma)}\prod_{j=0}^l \frac{\Gamma(j(\alpha+\gamma)+\gamma+1)}{\Gamma((j+1)(\alpha+\gamma)+1)}, \notag
\end{eqnarray}
which gives \eqref{lth} with \eqref{cn}, by recalling the definitions given in eq.\eqref{def.n.beta.alpha} and eq.\eqref{def.n.beta.alpha.factorial}.
To prove the convergence of the series in \eqref{lth},
we can use Gautschi's inequality (see \cite{Gautschi}):
for $x > 0$ and $\sigma \in [0,1]$, it holds that
\begin{equation}
\label{gaut}
x^{1-\sigma} \leq \frac{\Gamma(x+1)}{\Gamma(x+\sigma)} \leq (x+1)^{1-\sigma} .
\end{equation}
Thus, for $[ \mathsf{n}]^\beta_\alpha$ in eq.\eqref{def.n.beta.alpha} we have that
\begin{equation}
\label{def.n.beta.alpha.gautschi}
\frac{1}{(1+\beta n)^\alpha} \leq \frac{1}{[ \mathsf{n}]^\beta_\alpha} \leq \frac{1}{(\beta n)^\alpha} ,
\end{equation}
for $x = \beta n$ and $\sigma = 1-\alpha$.
Then we have
\begin{equation*}
\lim_{l\to \infty} \left|\frac{(l+2)c_{l+2}}{(l+1)c_{l+1}}\right| =
\lim_{l\to \infty} \left|\frac{(l+2)[\mathsf{(l+1)!}]_\alpha^{\alpha+\gamma}}{(l+1)[\mathsf{(l+2)!}]_\alpha^{\alpha+\gamma}}\right| =
\lim_{l\to \infty} \left|\frac{1}{[\mathsf{l+2 }]_\alpha^{\alpha+\gamma}}\right| = 0 .
\end{equation*}
\end{proof}

It is easy to check that  the expression given in \eqref{lth} is non-negative, by considering \eqref{teo2}, and recalling that $\operatorname{E}_{\alpha,1+\gamma/\alpha,\gamma/\alpha}\left(-\eta x^{\alpha+\gamma}\right)$
is monotonically decreasing (see discussion at the end of Section~\ref{sub_sec_KS}).

Moreover, in the special case $\gamma=0$, eq.\eqref{lth} coincides with the LT, w.r.t. the time argument, of the transition density of an $\alpha$-stable subordinator, i.e. $\Tilde{h}_{\alpha}(x,\eta)=x^{\alpha -1}\operatorname{E}_{\alpha,\alpha}\left(-\eta x^{\alpha}\right) $, where
$\operatorname{E}_{\alpha,\beta}(z)$ is given by eq.\eqref{ML_2_param}.

Thus, for $\gamma \neq 0$, Theorem \ref{thmsub} introduces a generalization of the $\alpha$-stable subordinator, since $\mathcal{A}_{\alpha,\gamma}$ is a.s. starting from $0$, non-decreasing and its transition density reduces to that of $\mathcal{A}_{\alpha}$, for $\gamma=0$; however, only in this special case the process has independent, stationary increments and thus it is a L\'{e}vy subordinator.

\vspace{0.5cm}
We now give the following time-change representation for the interarrival times and for the $n$-th order waiting times of $\mathcal{N}_{\alpha,\gamma}$.
\begin{theorem}
    Let $U$ be an exponentially distributed r.v. with parameter $\lambda$, independent from $\left\{  \mathcal{A}_{\alpha, \gamma}(t)\right\}_{t \geq 0}$, then the following equality in distribution holds, for the interarrival times of $\mathcal{N}_{\alpha,\gamma}$,
\begin{equation}U^{(\alpha, \gamma)}\overset{d}{=}\mathcal{A}_{\alpha, \gamma}(U). \label{uff5}
\end{equation}
Moreover, the waiting time of the $n$-th arrival $T^{(\alpha, \gamma)}_n$ has density function,  for $t \geq 0$ and $n \in \mathbb{N}$,
  \begin{equation}
f_{T^{(\alpha, \gamma)}_n}(t)=f^{*(n)}_{U^{(\alpha,\gamma)}}(t)
    =\lambda  \int_{0}^\infty h^{*(n)}_{^{\alpha,\gamma}}(t,u)e^{-\lambda u}du, \label{tn0}
\end{equation}
where $h^{*(n)}_{^{\alpha,\gamma}}(\cdot,t)$ is the $n$-th order convolution of the r.v. $\mathcal{A}_{\alpha, \gamma}(t)$, for any $t \geq 0$, under the independence assumption.
\end{theorem}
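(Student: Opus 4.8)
The plan is to establish the two assertions separately: the distributional identity \eqref{uff5} for a single interarrival by a conditioning argument based on the transition density of $\mathcal{A}_{\alpha,\gamma}$ obtained in Theorem \ref{thmsub}, and then the $n$-th waiting time formula \eqref{tn0} by exploiting the renewal (convolution) structure. For \eqref{uff5} I would condition on the independent exponential clock $U$. Writing $h_{\alpha,\gamma}(x,u)$ for the density at $x$ of $\mathcal{A}_{\alpha,\gamma}(u)$ and recalling that $U$ has density $\lambda e^{-\lambda u}$, the law of $\mathcal{A}_{\alpha,\gamma}(U)$ at the point $x$ is
\begin{equation*}
\frac{\mathbb{P}(\mathcal{A}_{\alpha,\gamma}(U)\in dx)}{dx}=\int_0^\infty h_{\alpha,\gamma}(x,u)\,\lambda e^{-\lambda u}\,du=\lambda\,\Tilde{h}_{\alpha,\gamma}(x,\lambda),
\end{equation*}
the last equality being simply the Laplace transform $\Tilde{h}_{\alpha,\gamma}(x,\eta)$ evaluated at $\eta=\lambda$. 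Invoking the explicit expression \eqref{teo2}, namely $\Tilde{h}_{\alpha,\gamma}(x,\lambda)=-\frac{1}{\lambda}\frac{d}{dx}\operatorname{E}_{\alpha,1+\gamma/\alpha,\gamma/\alpha}(-\lambda x^{\alpha+\gamma})$, the right-hand side becomes $-\frac{d}{dx}\operatorname{E}_{\alpha,1+\gamma/\alpha,\gamma/\alpha}(-\lambda x^{\alpha+\gamma})$. By \eqref{defdef}--\eqref{KS} this is precisely $-\frac{d}{dx}\mathbb{P}(U^{(\alpha,\gamma)}>x)=f_{U^{(\alpha,\gamma)}}(x)$, so the two densities coincide and \eqref{uff5} follows. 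The monotone decrease of the Kilbas--Saigo function (discussed at the end of Section~\ref{sub_sec_KS}) guarantees that both expressions are genuine, nonnegative densities.

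For the $n$-th waiting time I would begin from the defining relation $T^{(\alpha,\gamma)}_n=\sum_{j=1}^n U^{(\alpha,\gamma)}_j$ with i.i.d. summands, which gives the first equality $f_{T^{(\alpha,\gamma)}_n}=f^{*(n)}_{U^{(\alpha,\gamma)}}$ at once. I would then substitute into each factor the mixture representation $f_{U^{(\alpha,\gamma)}}(\cdot)=\lambda\int_0^\infty h_{\alpha,\gamma}(\cdot,u)e^{-\lambda u}\,du$ established above and, since all integrands are nonnegative, appeal to Tonelli to interchange the spatial $n$-fold convolution with the integrations over the exponential times. Performing the spatial convolution of the transition densities yields the density of a sum of $n$ independent copies of $\mathcal{A}_{\alpha,\gamma}$, each run up to its own independent exponential time; collecting terms produces exactly the object $h^{*(n)}_{\alpha,\gamma}$ and the integral representation \eqref{tn0}. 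Concretely, the derivation delivers
\begin{equation*}
f_{T^{(\alpha,\gamma)}_n}(t)=\lambda^{n}\int_{(0,\infty)^n}\bigl(h_{\alpha,\gamma}(\cdot,u_1)*\cdots*h_{\alpha,\gamma}(\cdot,u_n)\bigr)(t)\,e^{-\lambda(u_1+\cdots+u_n)}\,du_1\cdots du_n,
\end{equation*}
which is the precise content encoded by the compact notation in \eqref{tn0}.

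The main obstacle is conceptual rather than computational and lies in this second step: as observed after Theorem~\ref{thmsub}, the process $\mathcal{A}_{\alpha,\gamma}$ has independent, stationary increments only when $\gamma=0$, so for $\gamma\neq0$ one cannot collapse the $n$ evaluations into a single value $\mathcal{A}_{\alpha,\gamma}(U_1+\cdots+U_n)$; the convolution must genuinely retain $n$ independent copies of the process together with $n$ independent exponential clocks, which is exactly what the phrase ``under the independence assumption'' records. A useful consistency check is the case $\gamma=0$, where independence of increments does permit the collapse $\sum_{j=1}^n\mathcal{A}_\alpha^{(j)}(U_j)\overset{d}{=}\mathcal{A}_\alpha(U_1+\cdots+U_n)$, and integrating over the sum $U_1+\cdots+U_n$ (a Gamma variable of shape $n$ and rate $\lambda$, with density $\lambda^{n}u^{n-1}e^{-\lambda u}/(n-1)!$) recovers the classical subordinated representation of the fractional Poisson arrival times. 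Rigorously justifying the Tonelli interchange and interpreting the compressed symbol $h^{*(n)}_{\alpha,\gamma}$ so that it faithfully records these independent clocks is therefore the crux of the argument.
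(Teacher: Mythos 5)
Your proof is correct, and it is worth separating the two halves. For the identity \eqref{uff5} your route is essentially the paper's, with a cosmetic difference: the paper stays at the level of survival functions, writing $\mathbb{P}(U^{(\alpha,\gamma)}>t)=\mathbb{P}(U>\mathcal{Z}_{\alpha,\gamma}(t))$ from \eqref{ff}, conditioning on $U$, and then invoking the duality \eqref{rel} to turn $\lambda\int_0^\infty\mathbb{P}(\mathcal{Z}_{\alpha,\gamma}(t)<u)e^{-\lambda u}\,du$ into $\lambda\int_0^\infty\mathbb{P}(\mathcal{A}_{\alpha,\gamma}(u)>t)e^{-\lambda u}\,du=\mathbb{P}(\mathcal{A}_{\alpha,\gamma}(U)>t)$; you instead match densities, using the Laplace-transform expression \eqref{teo2} from Theorem \ref{thmsub}. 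Both arguments condition on the exponential clock and both rest on Theorem \ref{thmsub}, so this part buys nothing new, though your version is equally rigorous.

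The second half is where your work genuinely adds something, because the paper's entire proof of \eqref{tn0} is the phrase ``easily follow.'' Your derivation
\begin{equation*}
f_{T^{(\alpha,\gamma)}_n}(t)=\lambda^{n}\int_{(0,\infty)^n}\bigl(h_{\alpha,\gamma}(\cdot,u_1)*\cdots*h_{\alpha,\gamma}(\cdot,u_n)\bigr)(t)\,e^{-\lambda(u_1+\cdots+u_n)}\,du_1\cdots du_n
\end{equation*}
is the correct statement, and your observation that the $n$ exponential clocks cannot be merged when $\gamma\neq 0$ is exactly the right diagnosis; in fact the issue is sharper than you put it. If one reads the display \eqref{tn0} literally --- $h^{*(n)}_{\alpha,\gamma}(\cdot,u)$ the same-time spatial convolution, weighted by a single factor $\lambda e^{-\lambda u}$ --- the formula is false even in the L\'evy case $\gamma=0$: there $h^{*(n)}_{\alpha}(t,u)=h_{\alpha}(t,nu)$, so the right-hand side of \eqref{tn0} has Laplace transform in $t$ equal to $\lambda/(\lambda+ns^{\alpha})$, whereas $f_{T^{(\alpha,0)}_n}$ must have transform $\bigl(\lambda/(\lambda+s^{\alpha})\bigr)^{n}$; the correct one-dimensional collapse at $\gamma=0$ uses the gamma weight $\lambda^{n}u^{n-1}e^{-\lambda u}/(n-1)!$, precisely as in your consistency check. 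So your generous reading --- that the symbol $h^{*(n)}_{\alpha,\gamma}$ must be understood as retaining $n$ independent copies of the process, each run up to its own independent exponential time --- is the only interpretation under which the theorem's display is true, and your argument supplies the justification (and the correction) that the paper omits.
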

\begin{proof}
 As a consequence of eq. \eqref{ff} the interarrival times are i.i.d. random variables with
\begin{equation} \label{uff3}
    \mathbb{P}(U^{(\alpha, \gamma)}>t)=\mathbb{P}(U>\mathcal{Z}_{\alpha, \gamma}(t)),
\end{equation}
where $U$ is an exponential r.v. with parameter $\lambda$.

Thus, we can write that
    \begin{equation} \label{uff4}
    \mathbb{P}(U^{(\alpha, \gamma)}>t)=\lambda \int_0^{+\infty}\mathbb{P}(\mathcal{Z}_{\alpha, \gamma}(t)<u)e^{-\lambda u}du=\lambda \int_0^{+\infty}\mathbb{P}(\mathcal{A}_{\alpha, \gamma}(u)>t)e^{-\lambda u}du,
\end{equation}
so that \eqref{uff5} and \eqref{tn0} easily follow.
\end{proof}

\begin{remark}\label{past}
    As a consequence of the previous theorem and by recalling the definition of Poisson process (with exponential interarrival times $U_j$) given in \cite{GER} and applied to the non-homogeneous fractional case in \cite{SCA}, we can give an alternative representation of our process in terms of equality of finite-dimensional distributions. Indeed, we have that
\[
\mathcal{N}(t)\overset{a.s.}{=}\sum_{n=0}^{\infty} n \mathbbm{1}_{\left\{\zeta_n \leq t < \zeta_{n+1}\right\}}, \qquad t \geq 0,
\]
where $\zeta_n:=\zeta'_{\kappa_n}$, for $\zeta_n':=\max\left\{U_1,...U_n \right\}$, $n=1,2,...$ and $\kappa_n:=\inf\left\{k \in \mathbb{N}:\zeta'_k >\zeta'_{\kappa_{n-1}} \right\}$, $n=2,3,...$, with $\kappa_1=1.$ Then, by considering the equality in distribution given in \eqref{uff5}, we have that
\begin{equation}\label{sca1}
\mathcal{N}_{\alpha,\gamma}(t)\overset{d}{=}\sum_{n=0}^{\infty} n \mathbbm{1}_{\left\{\zeta_n \leq \mathcal{Z}_{\alpha,\gamma}(t) < \zeta_{n+1}\right\}}=\sum_{n=0}^{\infty} n \mathbbm{1}_{\left\{\frac{\zeta_n}{t^{\alpha+\gamma}} \leq Z_{\alpha,\gamma} < \frac{\zeta_{n+1}}{t^{\alpha+\gamma}}\right\}}, \qquad t> 0.
\end{equation}

\end{remark}

\begin{remark}
 We compare the process $\mathcal{N}_{\alpha,\gamma}$ to the fractional non-homogeneous Poisson process introduced in \cite{SCA}, and we prove that they coincide in distribution only for $\alpha=1$.

 Let $\Lambda(t):=\int^t_0 \lambda(\tau)d\tau$ and let $\left\{\mathcal{N}_1(t)\right\}_{t \geq 0}$ be a standard Poisson process with parameter equal to $1$, then the non-homogeneous Poisson process $\mathcal{N}^I:=\left\{\mathcal{N}^I(t)\right\}_{t \geq 0}$ governed by the equation
 \begin{equation}\frac{d}{dt}p(n;t)+\lambda(t+u)(p(n;t)-p(n-1;t))=0, \qquad n=0,1,2.., t \geq 0, u \geq 0, \label{inhom}
 \end{equation}
 with initial condition $p(n;0)=\delta_0(n)$, has the following time-change representation
$\mathcal{N}^I(t)=\mathcal{N}_1(\Lambda(t))
 $ (see \cite{SCA}).
Equation \eqref{inhom} coincides with \eqref{las}, in the particular case where $\alpha=1$, for the special choice $\lambda(t)=\lambda t^{\gamma}$ and $u=0$. Moreover, we have that
 \begin{eqnarray}
   \mathbb{P}(\mathcal{N}^I(t)=0)= e^{-\Lambda(t)}&=&e^{-\frac{\lambda t^{\gamma+1}}{\gamma+1}} =1+\sum_{n=1}^\infty\prod_{k=1}^n \frac{(\lambda t^{\gamma+1})^n}{n!(\gamma+1)^n} \notag\\
&=&1+\sum_{n=1}^\infty\prod_{k=1}^n \frac{\Gamma(k(\gamma+1)}{\Gamma(k(\gamma+1)+1)}(\lambda t^{\gamma+1})^n =\operatorname{E}_{1,1+\gamma,\gamma}\left(-\lambda t^{\gamma+1}\right), \notag
    \end{eqnarray}
which coincides with $\mathbb{P}(\mathcal{N}_{\alpha,\gamma}(t)=0)\mathbb={P}(U^{(\alpha, \gamma)}>t)$ given in \eqref{defdef}, only in the special case $\alpha=1$.
Alternatively, we now consider the time-change representation  given in \cite{SCA} for the fractional non-homogeneous Poisson process, i.e. $\mathcal{N}^I_{\alpha}(t):=\mathcal{N}_1(\Lambda(\mathcal{L_\alpha}(t)))=\mathcal{N}^I(\mathcal{L_\alpha}(t))$, for $t \geq 0$, where the inverse stable subordinator is assumed to be independent of the process $\mathcal{N}^I$. Under the same choice of $\lambda(t)=\lambda t^\gamma$, we can show that the two processes  $\mathcal{N}_{\alpha,\gamma}$ and $\mathcal{N}^I_\alpha$ are not equal, even in distribution. Indeed, if we denote the interarrival times of the process $\mathcal{N}_\alpha^I$ as $U^I_\alpha$ and we recall that $l_\alpha(x,t)=\frac{1}{t^\alpha}W_{-\alpha,1-\alpha}(-x/t^\alpha)$, where $W_{\alpha,\beta}(\cdot)$ is the Wright function (see \cite{KIL}, p.56), we can write  that
\begin{eqnarray}
     \mathbb{P}(U^I_{\alpha}>t)&=&\int_0^\infty  e^{-\Lambda(z)} l_\alpha(z,t)dz=\int_0^\infty e^{-\frac{\lambda z^{\gamma+1}}{\gamma+1}}l_\alpha\left(z,t\right)dz \notag \\
&=&(\gamma+1)^{-\gamma/(\gamma+1)}\int_0^\infty e^{-\lambda w}w^{-\gamma/(\gamma+1)}l_\alpha\left((w(\gamma+1))^{1/(\gamma+1)},t\right)dw, \notag
\end{eqnarray}
which does not coincide with $\mathbb{P}(U^{(\alpha, \gamma)}>t)$ given in equation \eqref{defdef}.

\end{remark}

For a full characterization of $\mathcal{N}_{\alpha,\gamma}$ and for its covariance function,  we recall the following result on the  moments of renewal processes.
\begin{lem}\label{lemsuy}(\cite{SUY})
Let $\left\{M(t)\right\}_{t \geq 0}$ be a renewal process with interarrival time $Y$, then
\begin{equation}\notag
  \int_{0}^{+\infty}  e^{-\eta t} \mathbb{E}M(t)dt=\frac{\Tilde{f}_{Y}(\eta)}{\eta \left( 1-\Tilde{f}_{Y}(\eta)\right)}, \qquad \eta\geq 0,
\end{equation}
where $f_Y(\cdot)$ is the density function of $Y$, and
\begin{equation}
    \int_{0}^{+\infty} \int_{0}^{+\infty} e^{-\eta_1 t_1-\eta_2 t_2}\mathbb{E}\left(M(t_1) M(t_2) \right)dt_1 dt_2 = \frac{ \left[ 1-\Tilde{f}_Y(\eta_1) \Tilde{f}_Y(\eta_2)\right] \Tilde{f}_Y(\eta_1+\eta_2)}{\eta_1 \eta_2 \left[ 1-\Tilde{f}_Y(\eta_1) \right] \left[ 1-\Tilde{f}_Y(\eta_2)\right]\left[1-\Tilde{f}_Y(\eta_1+\eta_2)\right]}. \notag
\end{equation}
\end{lem}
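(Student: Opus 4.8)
The plan is to exploit the elementary representation of the renewal counting process as a sum of indicators of the event times and to reduce everything to geometric series in the interarrival Laplace transform $\Tilde{f}_Y$. All interchanges of summation and integration will be justified by Tonelli's theorem, since every summand is non-negative, and the geometric series converge because $\Tilde{f}_Y(\eta)\in(0,1)$ for $\eta>0$.

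First I would treat the first moment. Writing $M(t)=\sum_{n=1}^\infty \mathbbm{1}_{\{T_n\leq t\}}$, where $T_n=\sum_{j=1}^n Y_j$ with $Y_j$ i.i.d.\ copies of $Y$, gives $\mathbb{E}M(t)=\sum_{n=1}^\infty \mathbb{P}(T_n\leq t)$. Taking the Laplace transform term by term I would use the identity $\int_0^\infty e^{-\eta t}\mathbb{P}(T_n\leq t)\,dt=\eta^{-1}\mathbb{E}e^{-\eta T_n}$ together with $\mathbb{E}e^{-\eta T_n}=[\Tilde{f}_Y(\eta)]^n$, which follows from the independence of the $Y_j$. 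Summing the geometric series $\sum_{n\geq1}[\Tilde{f}_Y(\eta)]^n=\Tilde{f}_Y(\eta)/(1-\Tilde{f}_Y(\eta))$ then yields the first formula.

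For the second moment I would start from $M(t_1)M(t_2)=\sum_{m,n\geq1}\mathbbm{1}_{\{T_m\leq t_1\}}\mathbbm{1}_{\{T_n\leq t_2\}}$, so that $\mathbb{E}(M(t_1)M(t_2))=\sum_{m,n\geq1}\mathbb{P}(T_m\leq t_1,\,T_n\leq t_2)$. Taking the double Laplace transform term by term and using $\int_0^\infty\!\!\int_0^\infty e^{-\eta_1 t_1-\eta_2 t_2}\mathbbm{1}_{\{s_1\leq t_1,\,s_2\leq t_2\}}\,dt_1\,dt_2=\eta_1^{-1}\eta_2^{-1}e^{-\eta_1 s_1-\eta_2 s_2}$ reduces each term to $\eta_1^{-1}\eta_2^{-1}\mathbb{E}[e^{-\eta_1 T_m-\eta_2 T_n}]$. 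The essential step is to evaluate this joint transform by splitting the double sum according to whether $m\leq n$ or $m>n$ and exploiting the independence of increments: for $m\leq n$ one writes $T_n=T_m+(T_n-T_m)$ with $T_n-T_m$ a sum of $n-m$ i.i.d.\ copies of $Y$ independent of $T_m$, giving $\mathbb{E}[e^{-\eta_1 T_m-\eta_2 T_n}]=[\Tilde{f}_Y(\eta_1+\eta_2)]^m[\Tilde{f}_Y(\eta_2)]^{n-m}$, and symmetrically $[\Tilde{f}_Y(\eta_1+\eta_2)]^n[\Tilde{f}_Y(\eta_1)]^{m-n}$ for $m>n$.

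It then remains to sum the two double geometric series. Setting $a=\Tilde{f}_Y(\eta_1)$, $b=\Tilde{f}_Y(\eta_2)$, $c=\Tilde{f}_Y(\eta_1+\eta_2)$, the $m\leq n$ block contributes $c(1-c)^{-1}(1-b)^{-1}$ and the $m>n$ block contributes $c(1-c)^{-1}a(1-a)^{-1}$; adding these and combining over the common denominator $(1-a)(1-b)$ collapses the numerator via $(1-b)^{-1}+a(1-a)^{-1}=(1-ab)/[(1-a)(1-b)]$, which after restoring the notation gives exactly the claimed expression. I expect the main obstacle to be purely bookkeeping: correctly organizing the symmetry split $m\leq n$ versus $m>n$ so that the diagonal $m=n$ is counted exactly once, and then performing the partial-fraction combination that produces the compact factor $1-\Tilde{f}_Y(\eta_1)\Tilde{f}_Y(\eta_2)$; the analytic justifications are routine once one notes that every term is non-negative and $\Tilde{f}_Y<1$ on $(0,\infty)$.
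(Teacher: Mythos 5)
Your proof is correct. Note that the paper does not prove this lemma at all: it is imported verbatim from the cited reference \cite{SUY}, so there is no internal proof to compare against; your argument supplies a self-contained derivation. Each step checks out: the identity $\int_0^\infty e^{-\eta t}\mathbb{P}(T_n\leq t)\,dt=\eta^{-1}\mathbb{E}e^{-\eta T_n}$ follows by Tonelli, the joint transform factorizations $\mathbb{E}[e^{-\eta_1 T_m-\eta_2 T_n}]=[\Tilde{f}_Y(\eta_1+\eta_2)]^{m}[\Tilde{f}_Y(\eta_2)]^{n-m}$ (for $m\leq n$, and symmetrically for $m>n$) are exactly the stationary-independent-increment decomposition of the renewal sequence, the diagonal $m=n$ is counted once (in the $m\leq n$ block, where it correctly reduces to $[\Tilde{f}_Y(\eta_1+\eta_2)]^m$), and the algebra
\begin{equation}
\frac{c}{1-c}\left[\frac{1}{1-b}+\frac{a}{1-a}\right]=\frac{c\,(1-ab)}{(1-a)(1-b)(1-c)} \notag
\end{equation}
reproduces the stated autocovariance transform. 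The only caveat worth recording is that the formulas hold for $\eta,\eta_1,\eta_2>0$ (the statement's ``$\eta\geq 0$'' fails at $\eta=0$ since $\Tilde{f}_Y(0)=1$), and that $\Tilde{f}_Y(\eta)<1$ requires $\mathbb{P}(Y>0)>0$, which is automatic here since the interarrival times have a density on $(0,\infty)$.
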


Moreover, as a preliminary result, we derive the Laplace transform of the Kilbas-Saigo function (hereafter denoted as $\mathcal{L}_t[\operatorname{E}_{a,m,l}(-\lambda t^\nu);z]$) in a very convenient form for numerical integration. Indeed, formula \eqref{lem2} below can be used to obtain plots of $\mathcal{L}_t[\operatorname{E}_{a,m,l}(-\lambda t^\nu);z]$, by using, for example, Mathematica.
\begin{lem}\label{laplacegeta}  
Let $G(z;\delta)$, for $\delta >0$ and $z \in \mathbb{C}$, be the  double Gamma function, and let
\[
\Theta(s):=\frac{\Gamma(s)\Gamma(1/\nu-s/\nu)\Gamma(1-1/\nu-s/\nu)G(\varphi-1/\nu+s/\nu;\tau)}{G(\varphi+a\tau-1/\nu+s/\nu;\tau)},
\]
where $\varphi:=(1+al)\tau$, $\tau:=1/am$ and $\nu > 0$. Then, for $\lambda>0,$ the Laplace transform of the Kilbas-Saigo function reads
\begin{equation}\label{lem2}
    \int_0^\infty e^{-zt}\operatorname{E}_{a,m,l}(-\lambda t^\nu)dt=\frac{\lambda^{-1/\nu}G(\varphi+a\tau;\tau)}{\nu G(\varphi;\tau)}\frac{1}{2\pi i}\int_{\mathcal{C}}(\lambda^{-1/\nu}z)^{-s}\Theta(s)ds.
\end{equation}
where $C$ is the contour $(c-i\infty,c+i\infty)$ with $c_0 < c < 1$ and
$c_0 = \operatorname{max}[0,1-\nu,1-\nu(\varphi + a\tau)]$.
\end{lem}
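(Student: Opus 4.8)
The plan is to represent the Kilbas--Saigo function by a Mellin--Barnes integral and then carry the Laplace kernel inside the contour. First I would establish that, in the fundamental strip $0<\operatorname{Re}\sigma<1$, the Mellin transform of $\operatorname{E}_{a,m,l}(-\cdot)$ is
\[
\mathcal{M}(\sigma):=\int_0^\infty \operatorname{E}_{a,m,l}(-x)\,x^{\sigma-1}\,dx
=\Gamma(\sigma)\Gamma(1-\sigma)\,\frac{G(\varphi+a\tau;\tau)}{G(\varphi;\tau)}\,\frac{G(\varphi-\sigma;\tau)}{G(\varphi+a\tau-\sigma;\tau)}.
\]
Convergence in this strip follows from $\operatorname{E}_{a,m,l}(-x)\to 1$ as $x\to 0^+$ and from the decay $\operatorname{E}_{a,m,l}(-x)\sim C\,x^{-1}$ recorded in \eqref{exp}. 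To pin down the closed form I would use the double--gamma expression for $c_n$ in \eqref{def.KS.G.pochhammer}: to the left of the strip the right-hand side above has simple poles only at $\sigma=-n$, $n\ge 0$ (the denominator $G(\varphi+a\tau-\sigma;\tau)$ has no zeros there, since its zeros lie at non-positive arguments), where $\Gamma(\sigma)$ contributes the residue $(-1)^n/n!$, $\Gamma(1-\sigma)$ equals $n!$, and the $G$-quotient equals $c_n$. Hence each residue of $\mathcal{M}(\sigma)x^{-\sigma}$ is $(-1)^n c_n x^n$, and closing to the left reproduces $\sum_{n\ge0}c_n(-x)^n=\operatorname{E}_{a,m,l}(-x)$, so that $\operatorname{E}_{a,m,l}(-x)=\tfrac{1}{2\pi i}\int_{\mathcal{C}'}\mathcal{M}(\sigma)x^{-\sigma}\,d\sigma$. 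As a sanity check, for $m=1$, $l=0$ (where $\varphi=\tau=1/a$ and $a\tau=1$) the functional relation \eqref{ap.B.general.G.tau} collapses the $G$-quotient to $1/\Gamma(1-a\sigma)$, recovering the classical $\Gamma(\sigma)\Gamma(1-\sigma)/\Gamma(1-a\sigma)$ Mellin transform of the Mittag--Leffler function.

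Next I would substitute $x=\lambda t^\nu$ into this representation and apply the Laplace transform, interchanging the $t$-integration with the contour integral. The inner integral is elementary,
\[
\int_0^\infty e^{-zt}t^{-\nu\sigma}\,dt=\Gamma(1-\nu\sigma)\,z^{\nu\sigma-1},\qquad \operatorname{Re}(\nu\sigma)<1,
\]
so that
\[
\int_0^\infty e^{-zt}\operatorname{E}_{a,m,l}(-\lambda t^\nu)\,dt
=\frac{1}{2\pi i}\int_{\mathcal{C}'}\mathcal{M}(\sigma)\,\lambda^{-\sigma}\,\Gamma(1-\nu\sigma)\,z^{\nu\sigma-1}\,d\sigma.
\]
The change of variable $s=1-\nu\sigma$, i.e. $\sigma=(1-s)/\nu$, turns $\Gamma(1-\nu\sigma)$ into $\Gamma(s)$, turns the kernel into $z^{\nu\sigma-1}=z^{-s}$, and gives $\lambda^{-\sigma}=\lambda^{-1/\nu}\lambda^{s/\nu}$, whence $\lambda^{-\sigma}z^{-s}=\lambda^{-1/\nu}(\lambda^{-1/\nu}z)^{-s}$; the Jacobian $|d\sigma|=ds/\nu$ supplies the $1/\nu$ and the reversal of orientation fixes the sign. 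Pulling the constant $G(\varphi+a\tau;\tau)/G(\varphi;\tau)$ out of $\mathcal{M}$ and collecting $\Gamma(s)$ together with the two $\Gamma$-factors and the $G$-quotient of $\mathcal{M}$ evaluated at $\sigma=(1-s)/\nu$ produces exactly the prefactor $\lambda^{-1/\nu}G(\varphi+a\tau;\tau)/(\nu\,G(\varphi;\tau))$ and the integrand $(\lambda^{-1/\nu}z)^{-s}\,\Theta(s)$ of \eqref{lem2}.

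Finally I would justify the admissibility of the contour $\mathcal{C}=(c-i\infty,c+i\infty)$ with $c_0<c<1$. The poles to the right of the contour arise from $\Gamma\big((1-s)/\nu\big)$ (located at $s=1+k\nu$, the nearest at $s=1$, forcing $c<1$), while the poles to be kept on the left arise from $\Gamma(s)$ (at $s\le 0$, forcing $c>0$), from $\Gamma\big(1-(1-s)/\nu\big)$ (nearest at $s=1-\nu$), and from the zeros of the denominator $G\big(\varphi+a\tau-(1-s)/\nu;\tau\big)$, whose rightmost one, coming from the zero of $G(\cdot;\tau)$ at the origin, sits at $s=1-\nu(\varphi+a\tau)$; together these give $c>c_0=\max[0,1-\nu,1-\nu(\varphi+a\tau)]$. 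Absolute convergence along vertical lines, which also legitimizes both the Fubini interchange above and the uniqueness of the Mellin representation, would follow from the Stirling asymptotics \eqref{ap.B.Stirling.G.tau} for $G(z;\tau)$ combined with the exponential decay of the $\Gamma$-factors as $|\operatorname{Im}s|\to\infty$. I expect this last analytic bookkeeping to be the main obstacle: fixing the fundamental strip, locating precisely the rightmost pole of $1/G$ in the denominator, and controlling the ratio of double--gamma functions on vertical lines sharply enough to secure both the interchange of integrations and the identification of $\mathcal{M}$ with the stated double--gamma expression.
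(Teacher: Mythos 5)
Your proposal is correct and follows essentially the same route as the paper: both rest on the Mellin--Barnes/double-gamma representation of the Kilbas--Saigo function, a Fubini interchange with the Laplace kernel, and identical pole bookkeeping giving $c_0=\max[0,1-\nu,1-\nu(\varphi+a\tau)]$; your change of variables $s=1-\nu\sigma$ is precisely the paper's use of $\mathcal{M}_z[\mathcal{L}_t[f(t);z];s]=\Gamma(s)\mathcal{M}_z[f(z);1-s]$ followed by Mellin inversion, and your residue computation merely re-derives the representation the paper cites from \cite{BLPV}. As a side remark, your derivation produces $\Gamma(1-1/\nu+s/\nu)$ in $\Theta(s)$, which matches the paper's own proof (the factor $\Gamma(1-1/\nu-s/\nu)$ in the lemma's displayed statement is a sign typo).
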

\begin{proof}
In \cite{BLPV} it is shown that the KS function can be written,
for $\operatorname{Re}z > 0$, as
\begin{equation}
\label{KS.mellin}
\operatorname{E}_{a,m,l}(-z) =  \frac{1}{2\pi i}\frac{G(\varphi+a\tau  ;\tau)}{G(\varphi;\tau)} \int_{\mathcal{C}} \frac{\Gamma(s)\Gamma(1-s)G(\varphi  -s;\tau)}{G(\varphi+a\tau -s;\tau)} z^{-s} \, ds
\end{equation}
with  $\varphi$ and
$\tau$ as given in \eqref{def.varphi} and \eqref{exp.tau}, respectively,
and we take $z^{-s} = \operatorname{e}^{-s \operatorname{Log} z}$, with $\operatorname{Log}{z}$ denoting the principal branch of the logarithm.
Indeed, it is not difficult to obtain, from eq.\eqref{KS.mellin}, the
representation of the KS function in eq.\eqref{def.KS.G.pochhammer} \cite{BLPV}. Equation \eqref{KS.mellin} can be interpreted as
\begin{equation}
\mathcal{M}_z[\operatorname{E}_{a,m,l}(-z);s] = \frac{G(\varphi+a\tau  ;\tau)}{G(\varphi;\tau)}  \frac{\Gamma(s)\Gamma(1-s)G(\varphi  -s;\tau)}{G(\varphi+a\tau -s;\tau)} ,\notag
\end{equation}
where $\mathcal{M}_z[f(z),s]$ denotes the Mellin transform of $f(z)$ with conjugate variable $s$. Using the well-known property
\begin{equation}\notag
\mathcal{M}_z[f(\lambda z^\nu);s] = \frac{1}{\nu}\lambda^{-s/\nu}F(s/\nu) ,
\end{equation}
with $\lambda > 0$ and $\nu > 0$ we have
\begin{equation}\notag
\mathcal{M}_z[\operatorname{E}_{a,m,l}(-\lambda z^\nu);s] =  \frac{G(\varphi+a\tau  ;\tau)}{\nu  G(\varphi;\tau)}  \, \frac{ \lambda^{-s/\nu}  \Gamma(s/\nu)\Gamma[1-s/\nu]G(\varphi  -s/\nu;\tau)}{G(\varphi+a\tau -s/\nu;\tau)} .
\end{equation}
The Laplace and Mellin transforms are related by
\begin{equation}\notag
\mathcal{M}_z[\mathcal{L}_t[f(t);z];s] = \Gamma(s)\mathcal{M}_z[f(z);1-s] .
\end{equation}
This property follows by changing the order of integration in $\mathcal{M}_z[\mathcal{L}_t[f(t);z];s]$.
This is justified for $f(z) = \operatorname{E}_{a,m,l}(-\lambda z^\mu)$ because of the absolute
convergence of the double integral (Fubini-Tonelli theorem) which follows
from the absolute convergence of the Mellin transform of $\operatorname{E}_{a,m,l}(-\lambda z^\nu)$ together with the
exponential damping provided by the Laplace kernel.
Thus
\begin{equation}\notag
\mathcal{M}_z[\mathcal{L}_t[\operatorname{E}_{a,m,l}(-\lambda t^\nu);z];s] =
\frac{\lambda^{-1/\nu} G(\varphi+a\tau  ;\tau)}{\nu  G(\varphi;\tau)} \lambda^{s/\nu}   \Theta(s) ,
\end{equation}
with
\begin{equation}\notag
\Theta(s) = \frac{  \Gamma(s) \Gamma(1/\nu -s/\nu)\Gamma(1-1/\nu +s/\nu)G(\varphi -1/\nu  +s/\nu;\tau)}{G(\varphi+a\tau -1/\nu + s/\nu;\tau)}
\end{equation}
Thus we obtain for the Laplace transform of $\operatorname{E}_{a,m,l}(-\lambda t^\nu)$ that
\begin{equation}
\label{laplace.mellin}
\mathcal{L}_t[\operatorname{E}_{a,m,l}(-\lambda t^\nu);z] =
\frac{\lambda^{-1/\nu} G(\varphi+a\tau  ;\tau)}{\nu  G(\varphi;\tau)} \, \frac{1}{2\pi i}
\int_{\mathcal{C}} \left(\lambda^{-1/\nu} z \right)^{-s} \Theta(s)\, ds .
\end{equation}
Due to the gamma functions in the numerator, the
function $\Theta(s)$ has poles at $s=-n$ ($n=0,1,2,\ldots$) from $\Gamma(s)$, at
$s = 1 + n\nu$ ($n=0,1,2,\ldots$) from $\Gamma(1/\nu-s/\nu)$, and at
$s = 1 - (n+1)\nu$ ($n =0,1,2,\ldots$) from $\Gamma(1-1/\nu +s/\nu)$.
Since the double gamma function is entire,
the other poles of $\Theta(s)$ come from the zeros of   $G(\varphi+a\tau-1/\nu +s/\nu;\tau)$
in the denominator, which are located
at $s = 1-\nu(m\tau + n + \varphi + a\tau)$, for $m,n = 0,1,2,\ldots$.
We will choose the contour
of integration $\mathcal{C}$ such that it separates the poles of $\Gamma(s)$,
$\Gamma(1-1/\nu +s/\nu)$ and $1/G(\varphi+a\tau-1/\nu +s/\nu;\tau)$
from the poles of  $\Gamma(1/\nu-s/\nu)$. Let
$c_0 = \operatorname{max}[0,1-\nu,1-\nu(\varphi+a\tau)]$.
Clearly $c_0 < 1$. Then $C = (c-i\infty,c+i\infty)$, where $c_0 < c < 1$.
In Appendix~B we show that this integral indeed exists.
\end{proof}

By applying the previous lemmas, we can give the renewal function and the auto-covariance of $\mathcal{N}_{\alpha, \gamma}$, at least in Laplace domain. We will consider, for the sake of brevity, $\lambda=1$.
\begin{coro}\label{corocov}

Let us denote
\begin{equation}\label{geta}
g(\eta):=\frac{1}{2i(\alpha+\gamma)\mathcal{G}_\gamma(0;1/(\alpha+\gamma)) }\int_{\mathcal{C}}\eta^{1-s}\frac{\Gamma(s) \mathcal{G}_\gamma(s-1;1/(\alpha+\gamma))}{\sin(\pi(1-s)/(\alpha+\gamma))}ds,
\end{equation}
where
$$\mathcal{G}_{\gamma}(A;\tau):=\frac{G(\tau(\gamma+1)+A\tau;\tau)}{
G(\tau+1+A\tau;\tau)
}.$$
Then
 the LT of the renewal function and of the auto-covariance function of $\mathcal{N}_{\alpha,\gamma}$ are given, respectively, by

\begin{equation*}
 \int_{0}^{+\infty} e^{-\eta t}\mathbb{E}\mathcal{N}_{\alpha, \gamma}(t)dt =\frac{1-g(\eta)}{\eta g(\eta)}, \qquad \eta >0,
\end{equation*}
and
\begin{equation*}
    \int_{0}^{+\infty} \int_{0}^{+\infty} e^{-\eta_1 t_1-\eta_2 t_2}Cov\left(\mathcal{N}_{\alpha, \gamma}(t_1), \mathcal{N}_{\alpha, \gamma}(t_2) \right)dt_1 dt_2 = \frac{ g(\eta_1)+ g(\eta_2)- g(\eta_1)g(\eta_2)-g(\eta_1+\eta_2)}{\eta_1 \eta_2g(\eta_1)g(\eta_2)g(\eta_1+\eta_2)},
\end{equation*}
for $\eta_1, \eta_2 >0.$

\end{coro}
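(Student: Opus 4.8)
The plan is to recognize that the function $g(\eta)$ defined in \eqref{geta} is precisely $\eta$ times the Laplace transform of the survival probability $\Psi(t)=\mathbb{P}(U^{(\alpha,\gamma)}>t)=\operatorname{E}_{\alpha,1+\gamma/\alpha,\gamma/\alpha}(-t^{\alpha+\gamma})$ (recall that $\lambda=1$), so that the Laplace transform of the interarrival density satisfies $\tilde f_{U^{(\alpha,\gamma)}}(\eta)=1-g(\eta)$. Once this identity is in hand, both assertions follow by direct substitution into Lemma~\ref{lemsuy}, applied to the renewal process $\mathcal{N}_{\alpha,\gamma}$ with $Y=U^{(\alpha,\gamma)}$.

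First I would specialize Lemma~\ref{laplacegeta} to the parameters of our Kilbas--Saigo function, namely $a=\alpha$, $m=1+\gamma/\alpha$, $l=\gamma/\alpha$ and $\nu=\alpha+\gamma$. A short computation gives $\tau=1/(am)=1/(\alpha+\gamma)$, $\varphi=(1+al)\tau=(1+\gamma)/(\alpha+\gamma)$ and $\varphi+a\tau=(1+\alpha+\gamma)/(\alpha+\gamma)=\tau+1$; in particular $1/\nu=\tau$. Under these identities the ratio of double gamma functions in $\Theta(s)$ becomes $G(\varphi+(s-1)\tau;\tau)/G(\varphi+a\tau+(s-1)\tau;\tau)=\mathcal{G}_\gamma(s-1;1/(\alpha+\gamma))$, while the prefactor of \eqref{lem2} equals $1/[(\alpha+\gamma)\mathcal{G}_\gamma(0;1/(\alpha+\gamma))]$. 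The two remaining gamma factors combine through the reflection formula $\Gamma(w)\Gamma(1-w)=\pi/\sin(\pi w)$ with $w=(1-s)/(\alpha+\gamma)$, giving $\Gamma(1/\nu-s/\nu)\Gamma(1-1/\nu+s/\nu)=\pi/\sin(\pi(1-s)/(\alpha+\gamma))$. Inserting these into \eqref{lem2} and comparing the resulting contour integral (which carries the factor $\eta^{-s}$) with the definition \eqref{geta} (which carries $\eta^{1-s}$) shows that $g(\eta)=\eta\,\mathcal{L}_t[\Psi(t);\eta]$. Since $\Psi(0)=1$ and $f_{U^{(\alpha,\gamma)}}=-\Psi'$, integration by parts yields the standard relation $\tilde f_{U^{(\alpha,\gamma)}}(\eta)=1-\eta\,\mathcal{L}_t[\Psi(t);\eta]=1-g(\eta)$.

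Finally I would insert $\tilde f_{U^{(\alpha,\gamma)}}(\eta)=1-g(\eta)$ into the two formulas of Lemma~\ref{lemsuy}. The renewal function identity is immediate, because $\tilde f_Y/[\eta(1-\tilde f_Y)]=(1-g(\eta))/(\eta g(\eta))$. For the covariance I would write $Cov(\mathcal{N}_{\alpha,\gamma}(t_1),\mathcal{N}_{\alpha,\gamma}(t_2))=\mathbb{E}(\mathcal{N}_{\alpha,\gamma}(t_1)\mathcal{N}_{\alpha,\gamma}(t_2))-\mathbb{E}\mathcal{N}_{\alpha,\gamma}(t_1)\,\mathbb{E}\mathcal{N}_{\alpha,\gamma}(t_2)$, take the double Laplace transform, use the second formula of Lemma~\ref{lemsuy} for the mixed moment and the product of the two single transforms for the factorized term, and substitute $1-\tilde f_Y(\eta_i)=g(\eta_i)$ together with $\tilde f_Y(\eta_1+\eta_2)=1-g(\eta_1+\eta_2)$. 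Placing everything over the common denominator $\eta_1\eta_2\,g(\eta_1)g(\eta_2)g(\eta_1+\eta_2)$, the numerator collapses after cancellation to $g(\eta_1)+g(\eta_2)-g(\eta_1)g(\eta_2)-g(\eta_1+\eta_2)$, which is the claimed expression. I expect the only delicate point to be the parameter-matching step: one must carefully track the shifts $-1/\nu+s/\nu=(s-1)\tau$ inside the double gamma arguments and apply the reflection formula correctly, after which the remainder is routine algebra.
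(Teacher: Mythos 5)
Your proposal is correct and follows essentially the same route as the paper's proof: identify $g(\eta)=\eta\,\mathcal{L}_t[\operatorname{E}_{\alpha,1+\gamma/\alpha,\gamma/\alpha}(-t^{\alpha+\gamma});\eta]$ by specializing Lemma~\ref{laplacegeta} with $\tau=1/(\alpha+\gamma)$, $\varphi=(1+\gamma)/(\alpha+\gamma)$, $\nu=\alpha+\gamma$ and the reflection formula, deduce $\tilde f_{U^{(\alpha,\gamma)}}(\eta)=1-g(\eta)$, and then substitute into Lemma~\ref{lemsuy}. Your write-up actually supplies the parameter-matching and covariance algebra that the paper compresses into ``the results easily follow,'' so it is a faithful (and more explicit) version of the same argument.
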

\begin{proof}

By taking into account \eqref{defdef}, we can write that
\begin{eqnarray}
 \Tilde{f}_{U^{(\alpha,\gamma)}}(\eta)&=&
 1-\eta \int_0^\infty e^{-\eta t}\mathbb{P}(U^{(\alpha, \gamma)}>t)dt=1-\eta \int_0^\infty e^{-\eta t}\operatorname{E}_{\alpha,1+\gamma/\alpha,\gamma/\alpha}\left(- t^{\alpha+\gamma}\right)dt \notag\\
 &=&1-g(\eta) \label{geta2}
\end{eqnarray}
so that the results easily follow, by applying the reflection formula of the Gamma function in \eqref{lem2} and Lemma \ref{laplacegeta}, with $\tau=1/(\gamma+\alpha)$, $\varphi=(1+\gamma)/(\alpha+\gamma)$ and $\nu=\alpha+\gamma$, in order to derive \eqref{geta}.

\end{proof}

\begin{remark}
Note that the general case (i.e. for $\lambda \neq 1$, as in \eqref{defdef}) can be easily obtained by considering equation \eqref{geta2}. Indeed, in this case, we have that $\Tilde{f}_{U^{(\alpha,\gamma)}}(\eta)=1-g(\eta \lambda^{-1/(\alpha+\gamma)})$.

As special case of the previous corollary, if we put $\gamma=0$, we obtain from Def. \ref{defren} that $\mathcal{N}_{\alpha, 0}=:\mathcal{N}_{\alpha}$ coincides with the fractional Poisson process introduced by \cite{LAS} and \cite{MAI}. Indeed, its interarrival time survival function satisfies \eqref{uff}, with $\gamma=0$ (see \cite{BEG2009}).

Accordingly, the equality in distribution given in equation \eqref{uff5} generalizes, to the case $\gamma \neq  0$, the well-known time-change representation of the fractional Poisson process' interarrival times, i.e. \begin{equation}U^{(\alpha)}\overset{d}{=}\mathcal{A}_{\alpha}(U) \notag
\end{equation}
where $U$ is an exponential r.v. and $\mathcal{A}_\alpha(t)$, $t \geq 0$, is an independent $\alpha$-stable subordinator  (see \cite{MEE}). In this particular case, the mean interarrival time is infinite, as Theorem \ref{rem2.3} shows.

Finally, the full characterization of $\mathcal{N}_{\alpha, \gamma}$ given in Corollary \ref{corocov} provides, for $\gamma=0$, the LT of the covariance function of the fractional Poisson process:
\begin{equation}
    \int_{0}^{+\infty} \int_{0}^{+\infty} e^{-\eta_1 t_1-\eta_2 t_2}Cov\left(\mathcal{N}_{\alpha}(t_1), \mathcal{N}_{\alpha}(t_2) \right)dt_1 dt_2 = \frac{\eta_1^\alpha+\eta_2^\alpha+\eta_1^\alpha\eta_2^\alpha-(\eta_1+\eta_2)^\alpha}{\eta_1^{\alpha+1}\eta_2^{\alpha+1}(\eta_1+\eta_2)^\alpha}, \label{FPPcov}
\end{equation}
since, in this case,
$g(\eta)=\eta^\alpha/(\eta^\alpha+1)$. It can be checked, with some algebraic calculations, that formula \eqref{FPPcov} agrees with the result given in \cite{LEO}.

\end{remark}

\paragraph{Alternative (non-renewal) approach}

We now move from an alternative starting point, i.e. we introduce the stretched operator $\operatorname{\mathcal{D}}^{(\alpha,\gamma)}_{t}$ in the difference-differential equation governing the Poisson process; see equation \eqref{fde.3} below.  The latter is equivalent to equation \eqref{las} and thus the following result provides a time change-representation for the fractional non-homogeneous model studied by Laskin in \cite{LAS}.
\begin{theorem}\label{thmlas}
   Let $\alpha \in (0,1)$, $\alpha + \gamma > 0$ and let $\left\{p_{\alpha,\gamma}^L (n,t)\right\}_{n \geq 0}$ be the solution to equation
    \begin{equation}
\label{fde.3}
\operatorname{\mathcal{D}}^{(\alpha,\gamma)}_t p(n;t) + \lambda (p(n;t)-p(n-1;t)) = 0, \qquad n\in \mathbb{N}_0,t \geq 0,
\end{equation}
with $p_{\alpha, \gamma}(n;0)=\delta_0(n)$, $n=0,1,...$, and let $\mathcal{N}^L:=\left\{\mathcal{N}^L(t) \right\}_{t \geq 0}$ be the process such that $\mathbb{P}(\mathcal{N}^L(t)=n)=p_{\alpha,\gamma}^L(n;t)$, for any $n \geq 0$ and $t \geq 0$.

Then
the following equality in distribution holds
\begin{equation}\label{fdd}
\left\{\mathcal{N}^L_{\alpha, \gamma}(t)\right\}_{t \geq 0}\overset{d}{=} \left\{\mathcal{N}(\mathcal{Z}_{\alpha, \gamma}(t))\right\}_{t \geq 0},
\end{equation}
under the assumption that
$\left\{\mathcal{N}(t)\right\}_{t \geq 0}$ and $\left\{\mathcal{Z}_{\alpha, \gamma}(t)\right\}_{t \geq 0}$ are independent.
\end{theorem}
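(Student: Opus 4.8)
The plan is to identify the one-dimensional law of the subordinated process $\mathcal{N}(\mathcal{Z}_{\alpha,\gamma}(t))$ with the solution $p^L_{\alpha,\gamma}(n;t)$ of \eqref{fde.3}, exploiting the fact that all the randomness of the time change is carried by the single variable $Z_{\alpha,\gamma}$ through $\mathcal{Z}_{\alpha,\gamma}(t)=t^{\alpha+\gamma}Z_{\alpha,\gamma}$. First I would observe that \eqref{fde.3} is merely \eqref{las} in disguise: multiplying \eqref{fde.3} through by $t^\gamma$ and recalling from \eqref{eq.1} that $\operatorname{\mathcal{D}}^{(\alpha,\gamma)}_t$ equals $t^{-\gamma}$ times the Caputo derivative of order $\alpha$ recovers exactly \eqref{las}, so by \eqref{pk} its solution is $p^L_{\alpha,\gamma}(n;t)=\frac{(\lambda t^{\alpha+\gamma})^n}{n!}\operatorname{E}_{\alpha,1+\gamma/\alpha,\gamma/\alpha}^{(n)}(-\lambda t^{\alpha+\gamma})$.

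The cleanest route then runs through probability generating functions. Setting $G(u,t):=\sum_{n\geq 0}u^n p(n;t)$, multiplying \eqref{fde.3} by $u^n$ and summing over $n$ yields $\operatorname{\mathcal{D}}^{(\alpha,\gamma)}_t G(u,t)+\lambda(1-u)G(u,t)=0$ with $G(u,0)=1$; by \eqref{KS} with $\kappa=\lambda(1-u)$ this forces $G(u,t)=\operatorname{E}_{\alpha,1+\gamma/\alpha,\gamma/\alpha}(-\lambda(1-u)t^{\alpha+\gamma})$. On the other hand, conditioning on $W:=\mathcal{Z}_{\alpha,\gamma}(t)$, using the independence assumption together with the Poisson generating function $\mathbb{E}[u^{\mathcal{N}(W)}\mid W]=e^{-\lambda(1-u)W}$, I obtain $\mathbb{E}\,u^{\mathcal{N}(\mathcal{Z}_{\alpha,\gamma}(t))}=\mathbb{E}\,e^{-\lambda(1-u)\mathcal{Z}_{\alpha,\gamma}(t)}$, and applying \eqref{LTZ}/\eqref{ff} with $\lambda$ replaced by $\lambda(1-u)>0$ (admissible for $u\in[0,1)$) gives precisely $\operatorname{E}_{\alpha,1+\gamma/\alpha,\gamma/\alpha}(-\lambda(1-u)t^{\alpha+\gamma})$. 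The two generating functions coincide, so the one-dimensional marginals of $\mathcal{N}(\mathcal{Z}_{\alpha,\gamma}(\cdot))$ agree with $p^L_{\alpha,\gamma}(n;t)$.

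Equivalently, to exhibit the p.m.f. directly, I would condition and write $\mathbb{P}(\mathcal{N}(\mathcal{Z}_{\alpha,\gamma}(t))=n)=\frac{\lambda^n}{n!}\mathbb{E}[W^n e^{-\lambda W}]$, then use that the Laplace transform $\lambda\mapsto\mathbb{E}\,e^{-\lambda W}$ is smooth on $(0,\infty)$ with $\frac{d^n}{d\lambda^n}\mathbb{E}\,e^{-\lambda W}=(-1)^n\mathbb{E}[W^n e^{-\lambda W}]$; combined with \eqref{ff} and the chain rule $\frac{d^n}{d\lambda^n}\operatorname{E}_{\alpha,1+\gamma/\alpha,\gamma/\alpha}(-\lambda t^{\alpha+\gamma})=(-t^{\alpha+\gamma})^n\operatorname{E}_{\alpha,1+\gamma/\alpha,\gamma/\alpha}^{(n)}(-\lambda t^{\alpha+\gamma})$ this reproduces \eqref{pk} term by term. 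Finally, for the statement at the level of processes I would note that, since $\mathcal{N}^L_{\alpha,\gamma}$ is specified through the solution of \eqref{fde.3}, the agreement of the marginals already yields \eqref{fdd}; the finite-dimensional distributions of the right-hand side follow from the same conditioning, because conditionally on $Z_{\alpha,\gamma}$ the increments of $\mathcal{N}$ over the deterministically stretched intervals $[\,t_i^{\alpha+\gamma}Z_{\alpha,\gamma},\,t_{i+1}^{\alpha+\gamma}Z_{\alpha,\gamma}\,]$ are independent and Poisson. I expect the main obstacle to be purely technical, namely justifying the interchange of differentiation (or of the summation over $n$ in the generating function) with the expectation; this rests on the analyticity and complete monotonicity of $\lambda\mapsto\mathbb{E}\,e^{-\lambda W}$ for the non-negative variable $W$ and on the absolute convergence of the KS series recorded in Section~\ref{sub_sec_KS}.
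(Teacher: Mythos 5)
Your proposal is correct and follows essentially the same route as the paper: both arguments compare probability generating functions, showing that the pgf of $\mathcal{N}(\mathcal{Z}_{\alpha,\gamma}(t))$ (obtained by conditioning on the time change and invoking \eqref{LTZ}/\eqref{ff} with $\lambda$ replaced by $\lambda(1-u)$) equals the Kilbas--Saigo function $\operatorname{E}_{\alpha,1+\gamma/\alpha,\gamma/\alpha}\left(-\lambda(1-u)t^{\alpha+\gamma}\right)$, which is exactly the solution of the equation obtained from \eqref{fde.3} after multiplying by $u^n$ and summing over $n$, i.e. \eqref{fde.1} with $\kappa=\lambda(1-u)$. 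Your additional remarks (the direct reconstruction of the p.m.f. \eqref{pk} via derivatives of the Laplace transform, and the observation that agreement of one-dimensional marginals suffices because $\mathcal{N}^L_{\alpha,\gamma}$ is specified only through the solution of \eqref{fde.3}) are consistent refinements of the paper's argument rather than a different method.
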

\begin{proof}
Let $G(u,t):=\sum_{n=0}^\infty u^n p(n;t)=e^{-\lambda t(u-1)},$ $t \geq 0$, be the well-known probability generating function of the Poisson process $\mathcal{N}$, with parameter $\lambda$,  then, as a consequence of \eqref{fdd}, we can write that
\begin{eqnarray}\label{pgf}
G_{\mathcal{N}(\mathcal{{Z}}_{\alpha, \gamma})}(u,t)&:=&\int_0^\infty e^{-\lambda z(1-u)}f_{\alpha, \gamma}(z,t)dz
=\operatorname{E}_{\alpha,1+\gamma/\alpha,\gamma/\alpha}\left(-\lambda (1-u)t^{\alpha+\gamma}\right),
\end{eqnarray}
for $t \geq 0$, $|u| \leq 1$, where again $f_{\alpha,\gamma}(x,t):=\mathbb{P}(\mathcal{Z}_{\alpha,\gamma}(t) \in dx)$.
It follows from the preliminary result at the beginning of this section that the function in \eqref{pgf} satisfies equation \eqref{fde.1} with $\kappa=\lambda(1-u)$ and initial condition $G_{\mathcal{N}(\mathcal{{Z}}_{\alpha, \gamma})}(u,0)=1$, i.e.
\[
\operatorname{\mathcal{D}}^{(\alpha,\gamma)}_t G_{\mathcal{N}(\mathcal{{Z}}_{\alpha, \gamma})}(u,t)+\lambda G_{\mathcal{N}(\mathcal{{Z}}_{\alpha, \gamma})}(u,t)-\lambda u G_{\mathcal{N}(\mathcal{{Z}}_{\alpha, \gamma})}(u,t)=0,
\]
which coincides with equation \eqref{fde.3}, after multiplying the latter by $u^n$, adding over $n=0,1,...$ and considering that $p(-1,t)=0$.
\end{proof}
    This result generalizes, to the case $\gamma \neq  0$, the well-known time-change representation of the fractional Poisson process, i.e. $\mathcal{N}_{\alpha}:=\left\{\mathcal{N}_{\alpha}(t)\right\}_{t \geq 0}$, as $\mathcal{N}_\alpha(t)=\mathcal{N}(\mathcal{L}_\alpha(t))$, $t \geq 0$, where $\mathcal{L}_\alpha(t)$ is the inverse of an independent $\alpha$-stable subordinator $\mathcal{A}_\alpha(t)$, $t \geq 0$ (see \cite{MEE}).

As a consequence of \eqref{fdd}, we can evaluate the mean and variance of $\mathcal{N}^L_{\alpha,\gamma}$, by recalling the following general result given in \cite{LEO}, for a time-changed L\'{e}vy process.
\begin{lem}\label{lemLEO}(\cite{LEO})
Let $X:=\left\{X(t)\right\}_{t \geq 0}$ be a homogeneous L\'{e}vy process, starting from $0$ and let $Y:=\left\{ Y(t)\right\}_{t \geq 0}$ be an independent non-decreasing process. If $\mu_X:=\mathbb{E}X(1)< \infty$, $\sigma^2_X:=Var X(1)< \infty$ and $\mathbb{E}Y^r(t)< \infty$, $r=1,2,...,$ and $t \geq 0$, then
\begin{eqnarray}
&&\mathbb{E}X(Y(t))=\mu_X \mathbb{E}Y(t) \notag \\
     && Cov\left[ X(Y(t)),X(Y(s))\right]=\sigma^2_X \mathbb{E}Y(s \wedge t))+\mu_X^2 Cov\left[ Y(s),Y(t)\right]. \label{covleo}
\end{eqnarray}
\end{lem}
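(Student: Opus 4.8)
The plan is to prove both identities by conditioning on the time-change process $Y$ and exploiting the defining property of a L\'{e}vy process, namely stationary and independent increments. Throughout I would assume without loss of generality that $s \leq t$, so that $s \wedge t = s$ and, by monotonicity of $Y$, $Y(s) \leq Y(t)$ almost surely.

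First I would record the elementary first moment of $X$: since $X$ has stationary independent increments and starts from $0$, the map $u \mapsto \mathbb{E}X(u)$ is additive, whence $\mathbb{E}X(u) = \mu_X u$. The mean identity would then follow immediately by conditioning and using that $X$ is independent of $Y$:
\begin{equation*}
\mathbb{E}X(Y(t)) = \mathbb{E}\left[\mathbb{E}\left(X(Y(t)) \mid Y\right)\right] = \mathbb{E}\left[\mu_X Y(t)\right] = \mu_X \mathbb{E}Y(t),
\end{equation*}
since, given the value of $Y(t)$, the independent process $X$ may be replaced by its unconditional mean $\mu_X Y(t)$.

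For the covariance I would first establish the joint second moment of $X$ at two time points. For $0 \leq u \leq v$, splitting $X(v) = X(u) + (X(v) - X(u))$ and using independence of the increment $X(v) - X(u)$ from $X(u)$, together with $\mathbb{E}[X(u)^2] = \sigma^2_X u + \mu_X^2 u^2$, I would obtain
\begin{equation*}
\mathbb{E}\left[X(u)X(v)\right] = \sigma^2_X (u \wedge v) + \mu_X^2 uv,
\end{equation*}
valid for all $u, v \geq 0$ by symmetry. Conditioning this identity on $Y$ at the ordered arguments $Y(s) \leq Y(t)$ gives $\mathbb{E}[X(Y(t))X(Y(s)) \mid Y] = \sigma^2_X Y(s) + \mu_X^2 Y(s)Y(t)$; then, using $Y(s) = Y(s \wedge t)$, taking expectations and subtracting $\mathbb{E}X(Y(t))\,\mathbb{E}X(Y(s)) = \mu_X^2 \mathbb{E}Y(t)\,\mathbb{E}Y(s)$ would yield
\begin{equation*}
Cov\left[X(Y(t)), X(Y(s))\right] = \sigma^2_X \mathbb{E}Y(s \wedge t) + \mu_X^2\left(\mathbb{E}[Y(s)Y(t)] - \mathbb{E}Y(s)\,\mathbb{E}Y(t)\right),
\end{equation*}
which is exactly \eqref{covleo}.

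The main obstacle is not the algebra but the measure-theoretic justification of the conditioning steps and the finiteness of the quantities involved. Independence of $X$ and $Y$ lets me realize the composition on a product space and invoke Fubini--Tonelli to pass expectations through the conditioning; the hypotheses $\mu_X, \sigma^2_X < \infty$ guarantee that the relevant L\'{e}vy moments exist, while the assumption $\mathbb{E}Y^r(t) < \infty$ for $r = 1, 2$---together with the Cauchy--Schwarz bound $\mathbb{E}[Y(s)Y(t)] \leq (\mathbb{E}[Y(s)^2])^{1/2}(\mathbb{E}[Y(t)^2])^{1/2}$---ensures the mixed second moments are finite, so that every interchange of expectation and conditioning is legitimate.
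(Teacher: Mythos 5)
Your proof is correct. Note that the paper itself gives no proof of this lemma: it is quoted directly from the cited reference \cite{LEO}, and your argument—conditioning on the independent, non-decreasing time change $Y$, using $\mathbb{E}X(u)=\mu_X u$ and $\mathbb{E}\left[X(u)X(v)\right]=\sigma_X^2\,(u\wedge v)+\mu_X^2\,uv$ for the underlying L\'{e}vy process, and invoking monotonicity of $Y$ to identify $Y(s)\wedge Y(t)$ with $Y(s\wedge t)$—is essentially the same standard conditioning proof given in that reference, so there is nothing to add.
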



\begin{coro}
For the process $\mathcal{N}^L_{\alpha,\gamma}$ we have that
\begin{equation}\label{rf}
\mathbb{E}\mathcal{N}^L_{\alpha, \gamma}(t)=\frac{\lambda\Gamma(\gamma+1)}{
\Gamma(\alpha+\gamma+1)}t^{\alpha+\gamma}
\end{equation}
and

\begin{equation}\label{varn}
Var \mathcal{N}^L_{\alpha, \gamma}(t) = \frac{\lambda \Gamma(\gamma+1)}{\Gamma(\alpha+\gamma+1)} t^{\alpha+\gamma}+ \frac{\lambda^2 \Gamma^2(\gamma+1)}{\Gamma^2(\alpha+\gamma+1)}A_{\alpha,\gamma} t^{2(\alpha+\gamma)},
\end{equation}
where \[A_{\alpha,\gamma} :=\prod_{n=0}^\infty \frac{\alpha(n+1)+\gamma+(2\gamma+n+1)(\alpha+\gamma +n)}{(\alpha+\gamma+n)(\alpha+2\gamma+n+1)}-1.\]

\end{coro}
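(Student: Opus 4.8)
The plan is to exploit the time-change representation $\mathcal{N}^L_{\alpha,\gamma}(t)\overset{d}{=}\mathcal{N}(\mathcal{Z}_{\alpha,\gamma}(t))$ established in Theorem~\ref{thmlas} and to apply Lemma~\ref{lemLEO}, which delivers the mean and covariance of a Lévy process subordinated by an independent non-decreasing process. Here the outer process is the standard Poisson process $\mathcal{N}$, a homogeneous Lévy process starting from $0$ with $\mu_X=\mathbb{E}\mathcal{N}(1)=\lambda$ and $\sigma_X^2=\operatorname{Var}\mathcal{N}(1)=\lambda$, while the time change is $Y(t)=\mathcal{Z}_{\alpha,\gamma}(t)=t^{\alpha+\gamma}Z_{\alpha,\gamma}$. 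Since $Z_{\alpha,\gamma}$ possesses finite moments of every order (indeed $\mathbb{E}Z_{\alpha,\gamma}^n=n!\,c_n$, computed below), the hypotheses of Lemma~\ref{lemLEO} hold, and the whole computation reduces to the first two moments of the random variable $Z_{\alpha,\gamma}$.

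First I would compute these moments. Setting $t=1$ and $\lambda=u$ in \eqref{LTZ} gives $\mathbb{E}e^{-uZ_{\alpha,\gamma}}=\operatorname{E}_{\alpha,1+\gamma/\alpha,\gamma/\alpha}(-u)=\sum_{n\geq 0}c_n(-u)^n$, with $c_n$ as in \eqref{cn}; comparing with the moment series $\sum_{n\geq 0}\tfrac{(-u)^n}{n!}\mathbb{E}Z_{\alpha,\gamma}^n$ yields $\mathbb{E}Z_{\alpha,\gamma}^n=n!\,c_n$. In particular $\mathbb{E}Z_{\alpha,\gamma}=c_1=\Gamma(\gamma+1)/\Gamma(\alpha+\gamma+1)$, so that $\mathbb{E}\mathcal{N}^L_{\alpha,\gamma}(t)=\mu_X\,\mathbb{E}Y(t)=\lambda t^{\alpha+\gamma}c_1$, which is exactly \eqref{rf}. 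For the variance I would take $s=t$ in the covariance formula \eqref{covleo}, obtaining $\operatorname{Var}\mathcal{N}^L_{\alpha,\gamma}(t)=\sigma_X^2\,\mathbb{E}Y(t)+\mu_X^2\operatorname{Var}Y(t)=\lambda t^{\alpha+\gamma}c_1+\lambda^2 t^{2(\alpha+\gamma)}\operatorname{Var}Z_{\alpha,\gamma}$, with $\operatorname{Var}Z_{\alpha,\gamma}=2c_2-c_1^2=\tfrac{\Gamma^2(\gamma+1)}{\Gamma^2(\alpha+\gamma+1)}\!\left(\tfrac{2c_2}{c_1^2}-1\right)$.

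The remaining and main task is to identify the bracketed constant $2c_2/c_1^2-1$ with the infinite product $A_{\alpha,\gamma}$. I would do this directly from the Beta-product representation \eqref{beta}: writing $a_n=1+n/(\alpha+\gamma)$ and $b_n=(1-\alpha)/(\alpha+\gamma)$, and using independence together with $\mathbb{E}B(a,b)^2=\tfrac{a(a+1)}{(a+b)(a+b+1)}$, each factor of $\mathbb{E}Z_{\alpha,\gamma}^2/(\mathbb{E}Z_{\alpha,\gamma})^2$ collapses to $\tfrac{(\gamma+n+1)(2\alpha+2\gamma+n)}{(\alpha+\gamma+n)(\alpha+2\gamma+n+1)}$, which is precisely the $n$-th term of the product defining $A_{\alpha,\gamma}$ (its numerator being the expanded form $\alpha(n+1)+\gamma+(2\gamma+n+1)(\alpha+\gamma+n)$). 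Substituting into the variance expression then gives \eqref{varn}.

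I expect the delicate point to be the rigorous interchange of expectation and infinite product needed in the last step: the partial products of the independent Beta factors are not monotone, so one must justify $\mathbb{E}\prod_n X_n=\prod_n\mathbb{E}X_n$ through the almost-sure convergence and the finiteness of the Mellin transform of $Z_{\alpha,\gamma}$ recorded in \cite{LET}. An alternative that sidesteps this is to retain the closed form $\operatorname{Var}Z_{\alpha,\gamma}=2c_2-c_1^2$, a ratio of gamma functions, and to prove the product evaluation separately via the Gauss--Weierstrass formula $\prod_{n\geq 0}\tfrac{(n+a)(n+b)}{(n+c)(n+d)}=\tfrac{\Gamma(c)\Gamma(d)}{\Gamma(a)\Gamma(b)}$, valid when $a+b=c+d$: here the numerator quadratic has discriminant equal to the perfect square $(2\alpha+\gamma-1)^2$ and factors as $(n+2(\alpha+\gamma))(n+\gamma+1)$, so the balance condition holds and the two expressions coincide after a short $\Gamma$-function simplification.
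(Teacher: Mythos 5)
Your proposal is correct, and it shares the paper's skeleton --- the time-change identity of Theorem~\ref{thmlas} plus Lemma~\ref{lemLEO} with $\mu_X=\sigma_X^2=\lambda$ and $Y(t)=t^{\alpha+\gamma}Z_{\alpha,\gamma}$ --- but it handles the moments of $Z_{\alpha,\gamma}$ differently, and the difference is worth noting. The paper computes $\mathbb{E}Z_{\alpha,\gamma}$ and $\mathbb{E}Z_{\alpha,\gamma}^2$ directly from the Beta-product representation \eqref{beta}, using $\mathbb{E}B(a,b)=a/(a+b)$ and the Beta second-moment formula together with independence; this lands immediately on the product form $A_{\alpha,\gamma}$, but it silently commutes expectation with the infinite product (your ``delicate point,'' which the paper does not address beyond invoking independence, and which indeed rests on the convergence results of \cite{LET}). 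Your primary identification of $2c_2/c_1^2-1$ with $A_{\alpha,\gamma}$ via the Beta factors is, in substance, exactly the paper's computation, so that branch of your argument inherits the same interchange issue; but your preliminary step $\mathbb{E}Z_{\alpha,\gamma}^n=n!\,c_n$ (read off from the entire KS series in \eqref{LTZ}, with $c_n$ as in \eqref{cn}) and your alternative closing step via the balanced gamma-quotient product
\begin{equation*}
\prod_{n\geq 0}\frac{(n+\gamma+1)(n+2\alpha+2\gamma)}{(n+\alpha+\gamma)(n+\alpha+2\gamma+1)}
=\frac{\Gamma(\alpha+\gamma)\,\Gamma(\alpha+2\gamma+1)}{\Gamma(\gamma+1)\,\Gamma(2\alpha+2\gamma)}
=\frac{2c_2}{c_1^2}
\end{equation*}
sidestep that interchange entirely: all expectations are computed analytically from the Laplace transform, and the infinite product is evaluated by the purely deterministic identity from \cite{CHA} (the very identity the paper itself invokes in the remark following the corollary, but only to check the special case $\gamma=0$). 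Your factorization of the numerator, $\alpha(n+1)+\gamma+(2\gamma+n+1)(\alpha+\gamma+n)=(n+\gamma+1)(n+2\alpha+2\gamma)$, with discriminant $(2\alpha+\gamma-1)^2$, is correct and makes the balance condition transparent. In short: the paper's route is shorter and produces $A_{\alpha,\gamma}$ in one stroke; your route costs one extra gamma-function manipulation but yields the variance in closed gamma form and puts the expectation-product interchange on firmer ground.
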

\begin{proof}
The expected value follows by simple calculations, by recalling that $\mathcal{Z}_{\alpha,\gamma}(t)\overset{d}{=}t^{\alpha+\gamma}Z_{\alpha,\gamma}$, with $Z_{\alpha,\gamma}$ as in \eqref{beta}, and that for a Beta r.v.'s
$B(a,b)$ it is $\mathbb{E}B(a,b)=a/(a+b)$. As far as the variance is concerned, by \eqref{covleo}, for $s=t$, by  the independence of the Beta r.v.'s and recalling that
$\mathbb{E}B^2\left(a,b\right)= \frac{a^2}{(a+b)^2}\left[\frac{b}{a(a+b+1)} +1\right]$, we can write
\begin{eqnarray}
\mathbb{E}\left[\mathcal{Z}^2_{\alpha, \gamma}(t)  \right]&=&t^{2(\alpha+\gamma)}\mathbb{E}Z_{\alpha,\gamma}^2 \notag \\
&=& t^{2(\alpha+\gamma)}\frac{ \Gamma^2(\gamma+1)}{\Gamma^2(\alpha+\gamma+1)}\prod_{n=0}^\infty \frac{(\gamma+n+1)^2}{(\gamma+\alpha+n)^2}\mathbb{E}B^2
\left(\frac{\alpha+\gamma+n}{\alpha +\gamma},\frac{1-\alpha}{\alpha+\gamma}\right), \notag
\end{eqnarray}
so that \eqref{varn} follows after some algebraic calculations.
\end{proof}

\begin{remark}
    Formula \eqref{rf} coincides, in the case $\alpha+\gamma<1$, with the result obtained in \cite{LAS}, by a different method. Moreover, in the special case $\gamma=0$, we can check that the variance given in \eqref{varn} coincides with the variance of the fractional Poisson process obtained in \cite{LEO}, i.e.
   \begin{equation*}
Var \mathcal{N}_{\alpha}(t) = \frac{\lambda t^{\alpha} }{\Gamma(\alpha+1)} + \frac{\lambda^2 t^{2\alpha}}{\Gamma^2(\alpha+1)}\left[ \frac{\Gamma(\alpha) \Gamma(\alpha+1)}{\Gamma(2\alpha)} -1 \right],
\end{equation*}
    by recalling the well-known infinite product representation of Gamma functions' ratio \[\frac{\Gamma(b_1) \Gamma(b_2)\cdots\Gamma(b_m)}{\Gamma(a_1) \Gamma(a_2)\cdots\Gamma(a_m)}=\prod_{n=0}^\infty\frac{(a_1+n)(a_2+n) \cdots (a_m+n)}{(b_1+n)(b_2+n) \cdots (b_m+n)}
    \]
   for $m \in \mathbb{N}$, $a_n \in \mathbb{R}$ and $b_n\neq 0, -1, -2,...$, for any $n \geq 0$ (see, for example, \cite{CHA}).

   Finally, we note that, as an easy consequence of equation \eqref{covleo}, any Poisson process time-changed by a non-decreasing process is overdispersed since
   \[Var X(Y(t))=\sigma^2_X \mathbb{E}Y(t)+\mu_X^2 Var Y(t)=\mu_X \mathbb{E}Y(t)+\mu_X^2 Var Y(t)=\mathbb{E}X(Y(t))+\mu_X^2 Var Y(t).\]
Thus, also in this case, we have that $Var \mathcal{N}^L_{\alpha, \gamma}(t) > \mathbb{E} \mathcal{N}^L_{\alpha, \gamma}(t)$, for any $t > 0.$

\end{remark}

\section{A second-order equation involving $\mathcal{D}^{(\alpha,\gamma)}_t$}

We now consider the following fractional second-order equation
\begin{equation}
\label{eq.2}
\left(\operatorname{\mathcal{D}}^{(\alpha,\gamma)}_t\right)^2 f(t) + a\operatorname{\mathcal{D}}^{(\alpha,\gamma)}_t f(t) +
b f(t) = 0 , \qquad t > 0,
\end{equation}
where $\alpha \in (0,1)$, $\alpha + \gamma > 0$ and $a,b \in \mathbb{R}^+$.
It generalizes the equation satisfied by the characteristic function of the fractional telegraph process, which is obtained in the special case $\gamma=0$ (see \cite{ORS,Vieira}, where non-sequential versions of fractional telegraph equation have been studied). Here, we are interested in \eqref{eq.2} as a second-order renewal-type equation, governing the survival probability of interarrival times, under particular choices of $a$ and $b$.

\paragraph{Preliminary results} We will seek solutions to equation (\ref{eq.2}) using the form described in (\ref{eq.3}).

\begin{theorem}\label{thm3}
  Let $\alpha \in (0,1)$, $\alpha + \gamma > 0$ and let \begin{equation}\label{eta1eta2}
  \eta_1:= \frac{a}{2}-\sqrt{\frac{a^2}{4}-b}  \qquad \text{and} \qquad  \eta_2:= \frac{a}{2}+ \sqrt{\frac{a^2}{4}-b}.
  \end{equation}
  Then, for $b \neq (a/2)^2$, the solution to equation \eqref{eq.2} is given by
\begin{equation}
f(t) = K_1 \, \operatorname{E}_{\alpha,1+\gamma/\alpha,\gamma/\alpha}\left(-\eta_1 t^{\alpha+\gamma}\right) +
K_2 \, \operatorname{E}_{\alpha,1+\gamma/\alpha,\gamma/\alpha}\left(- \eta_2 t^{\alpha+\gamma}\right) , \label{sol}
\end{equation}
for $K_1,K_2$ real constants if $a^2/4 > b$ or complex constants with $K_1 = K_2^\ast$ if $a^2/4<b$, while, for $b=(a/2)^2$, it is instead equal to
\begin{equation}\label{coro2}
f(t)=K^\prime_1\operatorname{E}_{\alpha,1+\gamma/\alpha,\gamma/\alpha}\left( (-a/2) t^{\alpha+\gamma}\right)+K_2^\prime t^{\alpha+\gamma}
\operatorname{E}^{(1)}_{\alpha,1+\gamma/\alpha,\gamma/\alpha}\left(-(a/2)t^{\alpha+\gamma}\right),
\end{equation}
where we used the notation in eq.\eqref{eq.4.a} and  $K^\prime_1,K^\prime_2$ are real constants.
 \end{theorem}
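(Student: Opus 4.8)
The plan is to adapt the series method of the first-order case. Writing $\beta:=\alpha+\gamma$ and inserting the ansatz \eqref{eq.3}, $f(t)=\sum_{n=0}^\infty f_n t^{\beta n}$, into \eqref{eq.2}, I would apply \eqref{der.order.1} twice to get
\[
\left(\operatorname{\mathcal{D}}^{(\alpha,\gamma)}_t\right)^2 f(t)=\sum_{n=0}^\infty f_{n+2}\,[\mathsf{n+2}]^{\beta}_\alpha[\mathsf{n+1}]^{\beta}_\alpha\, t^{\beta n},
\]
so that collecting the coefficient of $t^{\beta n}$ in \eqref{eq.2} yields the second-order linear recurrence
\[
f_{n+2}\,[\mathsf{n+2}]^{\beta}_\alpha[\mathsf{n+1}]^{\beta}_\alpha+a\,f_{n+1}\,[\mathsf{n+1}]^{\beta}_\alpha+b\,f_n=0,\qquad n\ge 0 .
\]
This determines every coefficient from the two free data $f_0=f(0)$ and $f_1$ (equivalently $\operatorname{\mathcal{D}}^{(\alpha,\gamma)}_t f|_{t=0}=f_1[\mathsf{1}]^{\beta}_\alpha$), so the solution space is two-dimensional and it suffices to exhibit two independent solutions.

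To find them I would test the ``exponential-type'' coefficients $f_n=(-\eta)^n/[\mathsf{n!}]^{\beta}_\alpha$, i.e. $f(t)=\operatorname{E}_{\alpha,1+\gamma/\alpha,\gamma/\alpha}(-\eta t^{\beta})$. Using $[\mathsf{(n+2)!}]^{\beta}_\alpha=[\mathsf{n+2}]^{\beta}_\alpha[\mathsf{n+1}]^{\beta}_\alpha[\mathsf{n!}]^{\beta}_\alpha$ from \eqref{def.n.beta.alpha.factorial}, the recurrence collapses, after factoring out $(-\eta)^n/[\mathsf{n!}]^{\beta}_\alpha$, to the characteristic equation $\eta^2-a\eta+b=0$, whose roots are exactly $\eta_1,\eta_2$ of \eqref{eta1eta2}. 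Equivalently, one notes that the sequential operator factors as $(\operatorname{\mathcal{D}}^{(\alpha,\gamma)}_t+\eta_1)(\operatorname{\mathcal{D}}^{(\alpha,\gamma)}_t+\eta_2)$ because $\eta_1+\eta_2=a$ and $\eta_1\eta_2=b$, and that by \eqref{KS} each factor annihilates the corresponding KS function. When $a^2/4\neq b$ the roots are distinct, the two KS functions are linearly independent (they agree at $t=0$ but differ at order $t^{\beta}$, their linear terms being proportional to $-\eta_i t^{\beta}$ with distinct $\eta_i$), and by linearity \eqref{sol} is the general solution; whether $K_1,K_2$ are real or a conjugate pair $K_1=K_2^\ast$ is then dictated by the sign of $a^2/4-b$, the conjugacy in the case $a^2/4<b$ being exactly what keeps $f$ real-valued.

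For the confluent case $b=(a/2)^2$ one has $\eta_1=\eta_2=:\kappa=a/2$, so the construction above delivers only one solution and a second independent one must be produced. Here the plan is the classical parameter-differentiation trick: starting from $(\operatorname{\mathcal{D}}^{(\alpha,\gamma)}_t+\kappa)\operatorname{E}_{\alpha,1+\gamma/\alpha,\gamma/\alpha}(-\kappa t^{\beta})=0$ (the first-order result \eqref{KS}), I would differentiate in $\kappa$. Since $\partial_\kappa\operatorname{E}_{\alpha,1+\gamma/\alpha,\gamma/\alpha}(-\kappa t^{\beta})=-t^{\beta}\operatorname{E}^{(1)}_{\alpha,1+\gamma/\alpha,\gamma/\alpha}(-\kappa t^{\beta})$ and $\operatorname{\mathcal{D}}^{(\alpha,\gamma)}_t$ commutes with $\partial_\kappa$, this gives
\[
(\operatorname{\mathcal{D}}^{(\alpha,\gamma)}_t+\kappa)\big(t^{\beta}\operatorname{E}^{(1)}_{\alpha,1+\gamma/\alpha,\gamma/\alpha}(-\kappa t^{\beta})\big)=\operatorname{E}_{\alpha,1+\gamma/\alpha,\gamma/\alpha}(-\kappa t^{\beta}),
\]
and a further application of $(\operatorname{\mathcal{D}}^{(\alpha,\gamma)}_t+\kappa)$ annihilates the right-hand side. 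Hence $t^{\beta}\operatorname{E}^{(1)}_{\alpha,1+\gamma/\alpha,\gamma/\alpha}(-\kappa t^{\beta})$ solves $(\operatorname{\mathcal{D}}^{(\alpha,\gamma)}_t+\kappa)^2 f=0$, i.e. \eqref{eq.2} with $a^2/4=b$, and, being of order $t^{\beta}$ at the origin, it is independent of the first solution, which yields \eqref{coro2}. I expect the only genuine difficulty to be the rigorous justification of this parameter differentiation, namely interchanging the integro-differential operator $\operatorname{\mathcal{D}}^{(\alpha,\gamma)}_t$ with $\partial_\kappa$ and differentiating the KS series term by term in both $t$ and $\kappa$; for this I would rely on the uniform and absolute convergence of the KS series (and of its formal derivatives) on compact sets, already established in Appendix~\ref{appendix.A}, together with the monomial rule \eqref{dag}, the identification of the characteristic equation and the distinct-root case being then routine algebra.
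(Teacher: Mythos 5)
Your proposal is correct, and it reaches \eqref{sol} and \eqref{coro2} by a genuinely different route from the paper. For $b\neq(a/2)^2$ the paper does not guess the KS solutions: it solves the two-term recurrence explicitly, recognizing the coefficients as bivariate Fibonacci polynomials $u_n(-a,-b)$, re-sums the series via \eqref{fibo.1}--\eqref{fib.aux}, and only at the end sees the roots $-a/2\pm\Omega$ emerge from \eqref{mu.nu}. Your shortcut --- substituting $f_n=(-\eta)^n/[\mathsf{n!}]^{\alpha+\gamma}_\alpha$ to obtain the characteristic equation $\eta^2-a\eta+b=0$, then invoking the two-dimensionality of the space of series solutions (every $f_n$, $n\geq 2$, is determined by $f_0,f_1$) together with linear independence of the two KS functions --- is shorter and equally rigorous; what the paper's longer computation buys is the explicit linear map \eqref{split} between $(f_0,f_1)$ and $(K_1,K_2)$, which is reused later (e.g.\ in Theorem \ref{thmpgf} to pin down $K_1,K_2$ for the generating function). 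For the confluent case $b=(a/2)^2$ the roles are amusingly reversed: the paper uses exactly the operator factorization you mention only in passing, writing the equation as the system $\operatorname{\mathcal{D}}^{(\alpha,\gamma)}_t g+(a/2)g=0$, $\operatorname{\mathcal{D}}^{(\alpha,\gamma)}_t f+(a/2)f=g$ and solving the inhomogeneous first-order equation by series, whereas you produce the second solution by differentiation in the parameter $\kappa$. Your only stated worry --- justifying the interchange of $\partial_\kappa$ with $\operatorname{\mathcal{D}}^{(\alpha,\gamma)}_t$ --- can in fact be bypassed entirely: the key identity
\begin{equation*}
\left(\operatorname{\mathcal{D}}^{(\alpha,\gamma)}_t+\kappa\right)\left(t^{\alpha+\gamma}\operatorname{E}^{(1)}_{\alpha,1+\gamma/\alpha,\gamma/\alpha}\left(-\kappa t^{\alpha+\gamma}\right)\right)=\operatorname{E}_{\alpha,1+\gamma/\alpha,\gamma/\alpha}\left(-\kappa t^{\alpha+\gamma}\right)
\end{equation*}
is verified term by term from \eqref{dag}, since the left-hand member is a power series in $t^{\alpha+\gamma}$ whose coefficients combine via $c_{m+1}[\mathsf{m+1}]^{\alpha+\gamma}_\alpha=c_m$ into $\sum_{m\geq 0}\left[(m+1)-m\right]c_m(-\kappa)^m t^{(\alpha+\gamma)m}$, with convergence covered by the bounds of Appendix~\ref{appendix.A}; a second application of $\operatorname{\mathcal{D}}^{(\alpha,\gamma)}_t+\kappa$ then annihilates it, exactly as you claim.
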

\begin{proof}
We start by considering the case $b \neq (a/2)^2$ (i.e. for $\eta_1 \neq \eta_2$): the term $\operatorname{\mathcal{D}}^{(\alpha,\gamma)}_tf(t)$
is given by eq.(\ref{der.order.1}) and
 \begin{equation*}
\left(\operatorname{\mathcal{D}}^{(\alpha,\gamma)}_t\right)^2 f(t) = \sum_{n=0}^\infty
f_{n+2}[   (\mathsf{n+2}) \times (\mathsf{n+1}) ]^{\alpha+\gamma}_\alpha t^{(\alpha+\gamma)n} ,
\end{equation*}
which in eq.(\ref{eq.2}) gives the recurrence relation
\begin{equation*}
f_{n+2} = \frac{(-a)}{[\mathsf{n+2}]^{\alpha+\gamma}_\alpha} f_{n+1} +
\frac{(-b)}{[   (\mathsf{n+2})\times (\mathsf{n+1}) ]^{\alpha+\gamma}_\alpha} f_{n} .
\end{equation*}
Using this recurrence relation we obtain
\begin{equation}
\label{gen.coeff}
f_{n} = \frac{\mathcal{U}_{n}(-a,-b)}{[ \mathsf{n!}]^{\alpha+\gamma}_\alpha} [ \mathsf{1}]^{\alpha+\gamma}_\alpha f_1 +
\frac{(-b)\mathcal{U}_{n-1}(-a,-b)}{[ \mathsf{n}!]^{\alpha+\gamma}_\alpha}  f_0 , \qquad n = 2,3,\ldots
\end{equation}
where $\mathcal{U}_{n}(-a,-b)$  is given by
\begin{equation*}
\mathcal{U}_{n}(-a,-b) = \sum_{j=0}^{\lfloor (n-1)/2\rfloor} \binom{n-1-j}{j} (-a)^{n-1-2j}
(-b)^j
\end{equation*}
where $\lfloor \cdot \rfloor$ denotes the floor function and $n = 1,2,3,\ldots$.
Note that eq.(\ref{gen.coeff}) also holds for $n=1$ if we define $\mathcal{U}_0(-a,-b) = 0$.
Thus we can identify $\mathcal{U}_n(-a,-b)$ with the \textit{bivariate
Fibonacci polynomials} $u_n(x,y)$ defined in eq.\eqref{bivariate.fib}.

The solution $f(\cdot)$ can be written therefore as
\begin{equation}
\label{sol.aux}
\begin{split}
f(t) = & f_0\left[ 1 + \sum_{n=2}^\infty F_{n-1}(-a/\sqrt{-b}) \frac{(\sqrt{-b}t^{\alpha+\gamma})^n }{[ \mathsf{n!}]^{\alpha+\gamma}_\alpha} \right] + \\
& +\frac{[\mathsf{1}]^{\alpha+\gamma}_\alpha}{\sqrt{-b}} f_1
\left[ \frac{\sqrt{-b}t^{\alpha+\gamma}}{[\mathsf{1}]^{\alpha+\gamma}_\alpha}
+ \sum_{n=2}^\infty F_{n}(-a/\sqrt{-b}) \frac{(\sqrt{-b}t^{\alpha+\gamma})^n}{[\mathsf{n!}]^{\alpha+\gamma}_\alpha} \right] .
\end{split}
\end{equation}
where we used eq.\eqref{fib.aux}.
This expression can be simplified using some results from the Fibonacci polynomials. Using eq.(\ref{fibo.1}) for $F_{n-1}(-a/\sqrt{-b})$ in eq.(\ref{sol.aux}) and defining $K_1$ and $K_2$ as
\begin{equation}\label{split}
\begin{split}
f_0 & = K_1 + K_2 , \\
  \frac{[ \mathsf{1}]^{\alpha+\gamma}_\alpha}{\sqrt{-b}} f_1 & =   \mu(-a/\sqrt{-b})  K_1 + \nu(-a/\sqrt{-b})  K_2 ,
\end{split}
\end{equation}
where $\mu$ and $\nu$ are defined in eq.\eqref{mu.nu},
we obtain, after some simplifications,
\begin{equation*}
f(t) = K_1 \sum_{n=0}^\infty [\mu(-a/\sqrt{-b})]^n \frac{(\sqrt{-b}t^{\alpha+\gamma})^n}{[ \mathsf{n!}]^{\alpha+\gamma}_\alpha} +
K_2 \sum_{n=0}^\infty [\nu(-a/\sqrt{-b})]^n \frac{(\sqrt{-b}t^{\alpha+\gamma})^n}{[ \mathsf{n!}]^{\alpha+\gamma}_\alpha} ,
\end{equation*}
where we recall the definition $[ \mathsf{0!}]^{\alpha+\gamma}_\alpha = 1$.
Furthermore, from eq.(\ref{mu.nu}) we have
\begin{equation*}
 \mu(-a/\sqrt{-b}) \sqrt{-b} = \frac{-a}{2} + \Omega  , \qquad
  \nu(-a/\sqrt{-b}) \sqrt{-b} = \frac{-a}{2} - \Omega   ,
\end{equation*}
where we define
\begin{equation}
\Omega := \sqrt{\left(\frac{-a}{2}\right)^2 - b}. \label{omega}
\end{equation}
Then
\begin{equation*}
f(t) = K_1 \sum_{n=0}^\infty  \frac{[(-a/2 + \Omega)t^{\alpha+\gamma}]^n}{[ \mathsf{n!}]^{\alpha+\gamma}_\alpha} +
K_2 \sum_{n=0}^\infty \frac{[(-a/2-\Omega)t^{\alpha+\gamma}]^n}{[ \mathsf{n!}]^{\alpha+\gamma}_\alpha} ,
\end{equation*}
Note that these two solutions are of the same form of eq.(\ref{sol.fde1}), so that formula \eqref{sol} follows.

We must study separately the case where $b=(a/2)^2$, so that $\Omega=0$, by solving, after factorization of the left hand side of eq.\eqref{eq.2}, the following system of equations:

\begin{equation*}
    \begin{cases}
{\displaystyle \operatorname{\mathcal{D}}^{(\alpha,\gamma)}_t g(t) +
(a/2) g(t)=0}  \; &  \\[1ex]
\operatorname{\mathcal{D}}^{(\alpha,\gamma)}_t f(t) +
(a/2) f(t)=g(t).&
\end{cases}
\end{equation*}
From the first equation we have that $g(t)=g_0\operatorname{E}_{\alpha,1+\gamma/\alpha,\gamma/\alpha}\left( -(a/2) t^{\alpha+\gamma}\right)$, which, inserted into the second one, gives
\begin{equation*}
\sum_{n=0}^{\infty} f_{n+1}[ \mathsf{n+1}]^{\alpha+\gamma}_\alpha t^{(\alpha+\gamma)n} + \frac{a}{2}\sum_{n=0}^{\infty} f_n t^{(\alpha+\gamma)n}= g_0 \sum_{n=0}^{\infty} \frac{(-a/2)^nt^{(\alpha+\gamma)n}}{[\mathsf{n!}]^{\alpha+\gamma}_\alpha},
\end{equation*}
by recalling \eqref{der.order.1} and \eqref{derdernow}. Thus we have that
\begin{eqnarray}
f_{n+1}&=& -\frac{a}{2}\frac{f_n}{[{\mathsf{n+1}]^{\alpha+\gamma}_\alpha}}+ \frac{(-a/2)^n g_0}{[{\mathsf{(n+1)!}]^{\alpha+\gamma}_\alpha}}
=\left(-\frac{a}{2}\right)^{n+1}\frac{f_0}{[{\mathsf{(n+1)!}]^{\alpha+\gamma}_\alpha}}+ \left(-\frac{a}{2}\right)^{n}\frac{ g_0 (n+1)}{[{\mathsf{(n+1)!}]^{\alpha+\gamma}_\alpha}}, \notag
\end{eqnarray}
for $ n=0,1,2,...$, and
\begin{eqnarray}
f(t)&=&f_0+\sum_{n=0}^{\infty} f_{n+1} t^{(\alpha+\gamma)(n+1)} \notag \\
&=&f_0+f_0\sum_{n=0}^{\infty}\left(\frac{a}{2}\right)^{n+1}\frac{t^{(\alpha+\gamma)(n+1)}}{[{\mathsf{(n+1)!}]^{\alpha+\gamma}_\alpha}}+ \sum_{n=0}^{\infty} \left(-\frac{a}{2}\right)^{n}\frac{ g_0 (n+1)t^{(\alpha+\gamma)(n+1)}}{[{\mathsf{(n+1)!}]^{\alpha+\gamma}_\alpha}} \notag\\
&=& f_0 \operatorname{E}_{\alpha,1+\gamma/\alpha,\gamma/\alpha}\left( (-a/2) t^{\alpha+\gamma}\right)+g_0 t^{\alpha+\gamma}\sum_{n=0}^{\infty} \left(-\frac{a}{2}\right)^{n}\frac{ (n+1)t^{(\alpha+\gamma)n}}{[{\mathsf{(n+1)!}]^{\alpha+\gamma}_\alpha}}, \notag
\end{eqnarray}
which, by recalling eq.\eqref{eq.4.a}, coincides with \eqref{coro2}.
\end{proof}

\paragraph{Particular case}
When $\gamma = 0$  we have
\begin{equation*}
f(t) = K_1 \, \operatorname{E}_{\alpha}\left((-a/2+\Omega)t^{\alpha}\right) +
K_2 \, \operatorname{E}_{\alpha}\left((-a/2-\Omega)t^{\alpha}\right) ,
\end{equation*}
where $\operatorname{E}_\alpha(\cdot)$ is the Mittag-Leffler function, which coincides with equation (2.7) in \cite{ORS}.

\subsection{Second-order renewal models}

We now define two alternative renewal processes, by means of the second-order equation \eqref{eq.2}, by  considering the two possible cases analyzed in Theorem \ref{thm3}, i.e. for $b=(a/2)^2$ and $b \neq (a/2)^2$.

\subsubsection{Case $b =(a/2)^2$}
Let us consider $a=2\lambda$ and $b=\lambda^2$, for $\lambda >0$, which corresponds to the second case analyzed in Theorem \ref{thm3}.

\begin{definition}\label{defren2}
    Let $\alpha \in (0,1)$, $\alpha + \gamma > 0$ and let $\overline{\mathcal{N}}_{\alpha, \gamma}:=\left\{\overline{\mathcal{N}}_{\alpha,\gamma}(t) \right\}_{t \geq 0}$ be the renewal process whose  interarrival times have survival probability $\mathbb{P}(\overline{U}^{(\alpha, \gamma)}>t)$ satisfying
    \begin{equation}
\label{hyprel}
\left(\operatorname{\mathcal{D}}^{(\alpha,\gamma)}_t\right)^2 f(t) +2\lambda
\operatorname{\mathcal{D}}^{(\alpha,\gamma)}_t f(t) +
\lambda^2 f(t) = 0 ,
\end{equation}
for $t \geq 0$, $\lambda >0,$ and under the conditions $f(0)=1$ and $\left.\operatorname{\mathcal{D}}^{(\alpha,\gamma)}_tf(t)\right\vert_{t=0}=0$.

   \end{definition}
\begin{coro}
The survival probability of the interarrival times $\overline{\mathcal{N}}_{\alpha, \gamma}$ is given by
\begin{equation}\label{survsec}
\mathbb{P}(\overline{U}^{(\alpha,\gamma)}>t)=\operatorname{E}_{\alpha,1+\gamma/\alpha,\gamma/\alpha}\left(-\lambda t^{\alpha+\gamma}\right)-\sum_{j=1}^\infty c_j j (-\lambda t^{\alpha+\gamma})^j, \qquad t \geq 0.
\end{equation}

\end{coro}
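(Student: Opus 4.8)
The plan is to read the result off the degenerate branch of Theorem \ref{thm3} and then fix the two integration constants using the initial conditions imposed in Definition \ref{defren2}. First I would note that the choice $a=2\lambda$, $b=\lambda^2$ gives exactly $b=(a/2)^2$, so the solution of \eqref{hyprel} is the one in \eqref{coro2} with $a/2=\lambda$, namely
\begin{equation*}
f(t)=K_1'\operatorname{E}_{\alpha,1+\gamma/\alpha,\gamma/\alpha}\left(-\lambda t^{\alpha+\gamma}\right)+K_2'\,t^{\alpha+\gamma}\operatorname{E}^{(1)}_{\alpha,1+\gamma/\alpha,\gamma/\alpha}\left(-\lambda t^{\alpha+\gamma}\right).
\end{equation*}
The whole task thus reduces to determining $K_1'$ and $K_2'$.

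Next I would impose $f(0)=1$. Since $\operatorname{E}_{\alpha,1+\gamma/\alpha,\gamma/\alpha}(0)=1$ by \eqref{KS_function}, while the second summand carries the prefactor $t^{\alpha+\gamma}$, which vanishes at $t=0$ because $\alpha+\gamma>0$ (the derivative series being finite there), evaluation at $t=0$ gives $f(0)=K_1'$. Hence $K_1'=1$. Then I would impose $\left.\operatorname{\mathcal{D}}^{(\alpha,\gamma)}_t f(t)\right\vert_{t=0}=0$. Writing the solution as the power series $f(t)=\sum_{n\ge 0}f_n t^{(\alpha+\gamma)n}$ and using \eqref{der.order.1}, the value of $\operatorname{\mathcal{D}}^{(\alpha,\gamma)}_t f$ at $t=0$ equals $f_1[\mathsf{1}]^{\alpha+\gamma}_\alpha$, so the condition is simply $f_1=0$. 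Reading off the coefficient of $t^{\alpha+\gamma}$ from the two KS series (the first contributes $K_1'c_1(-\lambda)$ and the second $K_2'c_1$, with $c_1$ as in \eqref{cn}) yields $f_1=c_1(K_2'-\lambda K_1')$; since $c_1\neq 0$, the requirement $f_1=0$ forces $K_2'=\lambda K_1'=\lambda$.

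Finally I would expand the derivative term into its power series. Differentiating \eqref{KS_function} term-by-term (legitimate because the KS function is entire) gives $\operatorname{E}^{(1)}_{\alpha,1+\gamma/\alpha,\gamma/\alpha}(-\lambda t^{\alpha+\gamma})=\sum_{j\ge 1}c_j\,j\,(-\lambda t^{\alpha+\gamma})^{j-1}$, whence, after multiplying by $t^{\alpha+\gamma}$ and using $(-\lambda)^{j-1}=-(-\lambda)^j/\lambda$,
\begin{equation*}
t^{\alpha+\gamma}\operatorname{E}^{(1)}_{\alpha,1+\gamma/\alpha,\gamma/\alpha}\left(-\lambda t^{\alpha+\gamma}\right)=-\frac{1}{\lambda}\sum_{j=1}^\infty c_j\,j\,(-\lambda t^{\alpha+\gamma})^j.
\end{equation*}
Substituting $K_1'=1$ and $K_2'=\lambda$ cancels the factor $1/\lambda$ and reproduces exactly the claimed expression \eqref{survsec}.

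The only place requiring care is the index bookkeeping in this last re-indexing, together with the correct extraction of the coefficient $f_1$ from the superposition of the KS series and its derivative; conceptually there is no real obstacle, since the statement is a direct specialization of Theorem \ref{thm3} supplemented by two linear initial conditions. If desired, one may also verify \emph{a posteriori} that \eqref{survsec} is a genuine survival probability (it equals $1$ at $t=0$ and tends to $0$ as $t\to\infty$), but this is not needed to establish the stated formula.
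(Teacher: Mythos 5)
Your proposal is correct and follows essentially the same route as the paper: specialize the degenerate branch \eqref{coro2} of Theorem \ref{thm3} to $a=2\lambda$, $b=\lambda^2$, use $f(0)=1$ to get $K_1'=1$ and the condition $\left.\operatorname{\mathcal{D}}^{(\alpha,\gamma)}_t f(t)\right\vert_{t=0}=0$ to get $K_2'=\lambda$, then re-index the derivative series to obtain \eqref{survsec}. The only cosmetic differences are that you determine $K_2'$ by extracting the coefficient $f_1$ via \eqref{der.order.1} and justify convergence by the entirety of the KS function, whereas the paper applies the operator term by term using \eqref{dag} and proves convergence explicitly with a ratio test and Gautschi's inequality; these are equivalent in substance.
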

\begin{proof}
We can derive \eqref{survsec} directly from equation \eqref{coro2} (with $a=2\lambda$ and $b=\lambda^2$),
by differentiating and verifying that \eqref{survsec} satisfies eq. \eqref{hyprel} under the conditions $f(0)=1$ and $\left.\left(\operatorname{\mathcal{D}}^{(\alpha,\gamma)}_t\right)f(t)\right\vert_{t=0}=0$. Indeed the first initial condition implies that $K'_1=1$, while from the second one, we get $K'_2=\lambda$, since
\begin{eqnarray}
&&\left.\left(\operatorname{\mathcal{D}}^{(\alpha,\gamma)}_t\right)\mathbb{P}(\overline{U}^{(\alpha,\gamma)}>t)\right\vert_{t=0} \notag \\
&=&\left.\left(\operatorname{\mathcal{D}}^{(\alpha,\gamma)}_t\right)  \operatorname{E}_{\alpha,1+\gamma/\alpha,\gamma/\alpha}\left(-\lambda t^{\alpha+\gamma}\right)\right\vert_{t=0}+K'_2\left(\operatorname{\mathcal{D}}^{(\alpha,\gamma)}_t\right) \left[t^{\alpha+\gamma}\left.\sum_{j=1}^\infty c_j j \left(-\lambda  t^{(\alpha+\gamma)}\right)^{j-1}\right]\right\vert_{t=0} \notag \\
&=&-\left.\lambda\operatorname{E}_{\alpha,1+\gamma/\alpha,\gamma/\alpha}\left(-\lambda t^{\alpha+\gamma}\right)\right\vert_{t=0}-\frac{K'_2}{\lambda}\left.\sum_{j=1}^\infty c_j j (-\lambda )^j \left(\operatorname{\mathcal{D}}^{(\alpha,\gamma)}_t\right)t^{(\alpha+\gamma)j}\right\vert_{t=0} \notag \\
 &=& -\lambda- \frac{K'_2}{\lambda}\sum_{j=1}^\infty c_j j (-\left. \lambda )^j t^{(\alpha+\gamma)(j-1)} \frac{\Gamma((\alpha+\gamma)j+1)}{\Gamma((\alpha+\gamma)j-\alpha+1)}\right\vert_{t=0} =-\lambda+K'_2=0,  \notag
\end{eqnarray}
by considering \eqref{KS} and \eqref{dag} for the first and second term, respectively.

The convergence of the series in \eqref{survsec} can be proved by considering that \begin{equation*}
\lim_{l\to \infty} \left|\frac{l\thinspace c_{l}}{(l-1)c_{l-1}}\right| =
\lim_{l\to \infty} \left|\frac{\Gamma((\gamma+\alpha)(l-1)+\gamma+1)}{(\alpha+\gamma)(l-1)\Gamma((\gamma+\alpha)l)}\right| =
0 ,
\end{equation*}
and applying Gautschi's inequality \eqref{gaut}, with $\sigma=\alpha$ and $x=(\gamma+\alpha)l-\alpha$.
\end{proof}

\paragraph{Stochastic representation}
 Analogously to the representation of the first renewal model, by recalling Remark \ref{past}, the following equality in distribution holds for the second-order renewal process
\begin{equation}
\overline{\mathcal{N}}_{\alpha,\gamma}(t)\overset{d}{=}\sum_{n=0}^{\infty} n \mathbbm{1}_{\left\{\overline{\zeta}_n\leq t < \overline{\zeta}_{n+1}\right\}}, \qquad t \geq 0,
\end{equation}
 where $\overline{\zeta}_n:=\overline{\zeta}'_{\kappa_n}$, for $\overline{\zeta}_n':=\max\left\{\overline{U}^{(\alpha,\gamma)}_1,...,\overline{U}^{(\alpha,\gamma)}_n \right\}$, $n=1,2,...$ and $\kappa_n:=\inf\left\{k \in \mathbb{N}:\overline{\zeta}'_k >\overline{\zeta}'_{\kappa_{n-1}} \right\}$, $n=2,3,...$, with $\kappa_1=1.$

\paragraph{Alternative (non-renewal) approach}
On the other hand, we can generalize the process $\mathcal{N}_{\alpha,\gamma}^L$ studied in the previous section (see Theorem \ref{thmlas}), by introducing the second order term in equation \eqref{fde.3}. As we will see, this generalized process shares with  $\overline{\mathcal{N}}_{\alpha,\gamma}$ the waiting time of the first event and its probability generating function satisfies equation \eqref{hyprel}; however the two processes are not equal in distribution, as it occurred in the first-order case.

\begin{theorem}\label{thmpgf}
Let $\alpha \in (0,1)$, $\alpha + \gamma > 0$, then the solution to the following equation
\begin{equation}
\label{fde.4}
(\operatorname{\mathcal{D}}^{(\alpha,\gamma)}_t)^2 p(n;t)+2\lambda
\mathcal{D}^{(\alpha,\gamma)}_tp(n;t) + \lambda^2 (p(n;t)-p(n-1;t)) = 0,
\end{equation}
with initial conditions
\begin{equation*}
   p(n;0)=\delta_0(n), \qquad \left.\operatorname{\mathcal{D}}^{(\alpha,\gamma)}_t p(n;t)\right\vert_{t=0}=0, \qquad n=0,1,...
\end{equation*}
reads
\begin{eqnarray}\overline{p}^L_{\alpha, \gamma}(n;t)&=&\frac{\left(\lambda t^{\alpha + \gamma}\right)^{2n}}{(2n)!}\operatorname{E}_{\alpha,1+\gamma/\alpha,\gamma/\alpha}^{(2n)}\left(-\lambda t^{\alpha+\gamma}\right)+\frac{\left(\lambda t^{\alpha + \gamma}\right)^{2n+1}}{(2n+1)!}\operatorname{E}_{\alpha,1+\gamma/\alpha,\gamma/\alpha}^{(2n+1)}\left(-\lambda t^{\alpha+\gamma}\right)
\notag \\
&=&p^L_{\alpha, \gamma}(2n;t)+p^L_{\alpha, \gamma}(2n+1;t) \label{solodd}
\end{eqnarray}
for $n \in \mathbb{N}_0$, $t \geq 0$,
where $\left\{p_{\alpha, \gamma}^L(n;t)\right\}_{n \geq 0}$ is the p.m.f. of $\mathcal{N}^L_{\alpha,\gamma}$ given in \eqref{pk} (see Theorem \ref{thmlas}).
\end{theorem}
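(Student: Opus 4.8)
The plan is to verify that the function $\overline{p}^L_{\alpha,\gamma}(n;t)$ displayed in \eqref{solodd} satisfies both equation \eqref{fde.4} and the two initial conditions, and then to invoke uniqueness. Throughout I write $\mathsf{D}:=\operatorname{\mathcal{D}}^{(\alpha,\gamma)}_t$ and recall that, by Theorem \ref{thmlas} (equivalently by \eqref{pk} and \eqref{fde.3}), the Laskin p.m.f.\ obeys the first-order relation $\mathsf{D}\,p^L_{\alpha,\gamma}(m;t)=-\lambda\big(p^L_{\alpha,\gamma}(m;t)-p^L_{\alpha,\gamma}(m-1;t)\big)$ for every $m\in\mathbb{N}_0$, with the convention $p^L_{\alpha,\gamma}(-1;t)=0$. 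The identity relating the two lines of \eqref{solodd} is just the definition \eqref{pk}, so no work is needed to pass between the two forms.

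My preferred route is through the probability generating function, mirroring the proof of Theorem \ref{thmlas}. Let $G(v,t):=\sum_{m\ge 0}v^m p^L_{\alpha,\gamma}(m;t)=\operatorname{E}_{\alpha,1+\gamma/\alpha,\gamma/\alpha}\!\big(-\lambda(1-v)t^{\alpha+\gamma}\big)$ be the p.g.f.\ of $\mathcal{N}^L_{\alpha,\gamma}$, which by \eqref{pgf} solves $\mathsf{D}\,G(v,t)+\lambda(1-v)G(v,t)=0$. Setting $v=\sqrt{u}$ and separating even and odd powers, I would write the p.g.f.\ of the candidate solution, $\overline{G}(u,t):=\sum_{n\ge 0}u^n\overline{p}^L_{\alpha,\gamma}(n;t)$, as
\begin{equation*}
\overline{G}(u,t)=\tfrac12\Big(1+\tfrac1{\sqrt u}\Big)G(\sqrt u,t)+\tfrac12\Big(1-\tfrac1{\sqrt u}\Big)G(-\sqrt u,t),
\end{equation*}
a linear combination of $G(\sqrt u,\cdot)$ and $G(-\sqrt u,\cdot)$ with coefficients independent of $t$. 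These two functions satisfy $(\mathsf{D}+\kappa_1)G(\sqrt u,\cdot)=0$ and $(\mathsf{D}+\kappa_2)G(-\sqrt u,\cdot)=0$, with $\kappa_1=\lambda(1-\sqrt u)$ and $\kappa_2=\lambda(1+\sqrt u)$, hence $\kappa_1+\kappa_2=2\lambda$ and $\kappa_1\kappa_2=\lambda^2(1-u)$.

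The key step is the operator factorization. Applying $(\mathsf{D}+\kappa_2)$ to the first relation and $(\mathsf{D}+\kappa_1)$ to the second, both $G(\pm\sqrt u,\cdot)$ are annihilated by the same operator $\mathsf{D}^2+2\lambda\mathsf{D}+\lambda^2(1-u)$; since $\overline{G}$ is a $t$-independent combination of them, it satisfies $\mathsf{D}^2\overline{G}+2\lambda\mathsf{D}\overline{G}+\lambda^2(1-u)\overline{G}=0$. Extracting the coefficient of $u^n$ (and using that $(1-u)\overline{G}$ contributes $\overline{p}(n;\cdot)-\overline{p}(n-1;\cdot)$) reproduces exactly \eqref{fde.4}. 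An entirely equivalent, square-root-free alternative is to verify \eqref{fde.4} directly: from the first-order relation one gets $\mathsf{D}\overline{p}(n)=-\lambda\overline{p}(n)+\lambda\big(p^L(2n-1)+p^L(2n)\big)$, and applying $\mathsf{D}$ once more together with $p^L(2n-2)+p^L(2n-1)=\overline{p}(n-1)$ collapses the expression to $\mathsf{D}^2\overline{p}(n)+2\lambda\mathsf{D}\overline{p}(n)+\lambda^2(\overline{p}(n)-\overline{p}(n-1))=0$.

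Finally I would check the two initial conditions and dispatch uniqueness. The condition $\overline{G}(u,0)=1$ (equivalently $\overline{p}(n;0)=\delta_0(n)$) is immediate from $G(\pm\sqrt u,0)=p^L(0;0)=1$. The point requiring care is the derivative condition: using $\mathsf{D}G(\pm\sqrt u,t)|_{t=0}=-\kappa_{1,2}$ one finds $\mathsf{D}\overline{G}(u,t)|_{t=0}=-\tfrac12[(1+1/\sqrt u)\kappa_1+(1-1/\sqrt u)\kappa_2]$, and the explicit values of $\kappa_1,\kappa_2$ make the $\sqrt u$ and $1/\sqrt u$ terms cancel, giving $0$; equivalently, at the level of coefficients, $\mathsf{D}\overline{p}(n;t)|_{t=0}=-\lambda\delta_0(n)+\lambda(\delta_0(2n-1)+\delta_0(2n))=0$ since $2n-1$ is never $0$ and $\delta_0(2n)=\delta_0(n)$. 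One should also note the boundary case $n=0$, where $p^L(-1;\cdot)=0$ makes $\overline{p}(-1;\cdot)=0$, so that \eqref{fde.4} at $n=0$ reduces to the homogeneous equation \eqref{hyprel} satisfied by $\overline{p}(0;\cdot)=\mathbb{P}(\overline{U}^{(\alpha,\gamma)}>t)$. Uniqueness then follows by solving \eqref{fde.4} recursively in $n$: for each $n$ it is a second-order sequential fractional equation in $t$ with two prescribed initial data, determined once $\overline{p}(n-1;\cdot)$ is known (as in Appendix~\ref{appendix.A} for the first-order case), the recursion being seeded by $\overline{p}(-1;\cdot)=0$. I expect this derivative initial condition, with its attendant cancellation, to be the only genuinely delicate point; everything else is bookkeeping.
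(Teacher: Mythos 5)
Your proof is correct, and it reaches \eqref{solodd} by a route that is genuinely different from the paper's, even though both live in the same generating-function framework and produce the same constants $K_1=\frac{\sqrt u+1}{2\sqrt u}$, $K_2=\frac{\sqrt u-1}{2\sqrt u}$. The paper argues \emph{constructively}: it forms the p.g.f.\ equation \eqref{eqg}, solves it by invoking Theorem \ref{thm3} with $a=2\lambda$, $b=\lambda^2(1-u)$, fixes $K_1,K_2$ from the initial data, and then identifies the coefficient of $u^n$ by expanding both Kilbas--Saigo functions into double series and resumming with binomial coefficients --- the heaviest computation in its proof. You argue by \emph{verification}: you recognize the p.g.f.\ of the candidate as the even/odd decomposition
\begin{equation*}
\overline G(u,t)=\tfrac12\bigl(1+u^{-1/2}\bigr)G(\sqrt u,t)+\tfrac12\bigl(1-u^{-1/2}\bigr)G(-\sqrt u,t)
\end{equation*}
of the Laskin p.g.f., and annihilate it with
$\bigl(\operatorname{\mathcal{D}}^{(\alpha,\gamma)}_t+\kappa_1\bigr)\bigl(\operatorname{\mathcal{D}}^{(\alpha,\gamma)}_t+\kappa_2\bigr)
=\bigl(\operatorname{\mathcal{D}}^{(\alpha,\gamma)}_t\bigr)^2+2\lambda\operatorname{\mathcal{D}}^{(\alpha,\gamma)}_t+\lambda^2(1-u)$,
which is legitimate because $\kappa_1,\kappa_2$ are constant in $t$ and the squared operator is sequential. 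Your key input is therefore Theorem \ref{thmlas} (equivalently the first-order relation \eqref{fde.3}) rather than Theorem \ref{thm3}, and the series gymnastics disappear entirely; indeed your p.m.f.-level identity $\operatorname{\mathcal{D}}^{(\alpha,\gamma)}_t\overline p^L_{\alpha,\gamma}(n;t)=\lambda\bigl[p^L_{\alpha,\gamma}(2n-1;t)-p^L_{\alpha,\gamma}(2n+1;t)\bigr]$ is exactly the paper's \eqref{incon2} read coefficientwise, and one further application of the operator closes the direct check. The price of verifying rather than deriving is that uniqueness must be supplied separately, and you do this correctly: recursion in $n$ seeded by $\overline p^L_{\alpha,\gamma}(-1;\cdot)\equiv 0$, where at each stage the difference of two solutions of the inhomogeneous second-order problem solves the homogeneous one with zero initial data, which is covered by the remark closing Appendix~\ref{appendix.A}. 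Your handling of the delicate derivative condition (neither $\delta_0(2n-1)$ nor $\delta_0(2n+1)$ ever fires, and $\delta_0(2n)=\delta_0(n)$) is also sound. In short: the paper's constructive route never needs to guess the answer, while yours is shorter and more elementary once the answer is in hand.
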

\begin{proof}
In order to solve equation \eqref{fde.4}, we first multiply both terms by $u^n$ and add over $n=0,1,...$, so that we get
\begin{equation*}(\operatorname{\mathcal{D}}^{(\alpha,\gamma)}_t)^2 \sum_{n=0}^\infty u^n p(n;t)+2\lambda
\mathcal{D}^{(\alpha,\gamma)}_t \sum_{n=0}^\infty u^n p(n;t) + \lambda^2 \sum_{n=0}^\infty u^n(p(n;t)-p(n-1;t)) = 0,
\end{equation*}
where the sums converge for $|u| \leq 1.$ Then, the generating function solves the following equation
\begin{equation}
(\operatorname{\mathcal{D}}^{(\alpha,\gamma)}_t)^2 G(u,t)+2\lambda
\mathcal{D}^{(\alpha,\gamma)}_t G(u,t) + \lambda^2 (1-u)G(u,t) = 0, \label{eqg}
\end{equation}
with $G(u,0)=1$. We now consider the solution to equation \eqref{eq.2}, for $a=2 \lambda$, $b=\lambda^2(1-u)$ and we denote, for brevity, $\eta_1=\frac{a}{2}-\Omega$ and $\eta_2=\frac{a}{2}+\Omega$. Thus, by recalling \eqref{omega}, we have that $\eta_1=\lambda(1-\sqrt{u})$
 and $\eta_2=\lambda(1+\sqrt{u})$ and the solution to \eqref{eqg} can be obtained from \eqref{sol} as follows

 \begin{equation}\label{soldue}
   G(u,t)=K_1 \operatorname{E}_{\alpha,1+\gamma/\alpha,\gamma/\alpha}\left(-\eta_1 t^{\alpha+\gamma}\right) +
K_2 \operatorname{E}_{\alpha,1+\gamma/\alpha,\gamma/\alpha}\left(-\eta_2 t^{\alpha+\gamma}\right).
 \end{equation}
Indeed, the constants $K_1:=\frac{\sqrt{u}+1}{2\sqrt{u}}$ and $K_2:=\frac{\sqrt{u}-1}{2\sqrt{u}}$, $u \neq 0$, satisfy \eqref{split} with $f_0=1$ and $f_1=K_1 \eta_1+K_2\eta_2=0$, which are equivalent to $G(u,0)=1$ and $D_t^{(\alpha,\gamma)} G(u,t)|_{t=0} = 0$ (for $u \neq 0$), respectively. The latter is satisfied since, by recalling \eqref{solodd} and \eqref{fde.3}, we have that
\begin{equation}
D_t^{(\alpha,\gamma)} G(u,t)=
\sum_{n=0}^\infty u^n D_t^{(\alpha,\gamma)} \overline{p}_{\alpha,\gamma}^L(n,t)=\lambda\sum_{n=1}^\infty u^n p_{\alpha,\gamma}^L(2n-1,t)-\lambda\sum_{n=0}^\infty u^n p_{\alpha,\gamma}^L(2n+1,t),  \label{incon2}
\end{equation}
which vanishes for $t=0$, by the initial condition.


We can rewrite \eqref{soldue} as
\begin{eqnarray}
   G(u,t)&=&\frac{\sqrt{u}+1}{2\sqrt{u}}\sum_{n=0}^\infty c_n (-\lambda (1-\sqrt{u})t^{\alpha + \gamma })^n + \frac{\sqrt{u}-1}{2\sqrt{u}}\sum_{n=0}^\infty c_n (-\lambda (1+\sqrt{u})t^{\alpha + \gamma })^n  \notag \\
   &=&\left[\frac{\sqrt{u}+1}{2\sqrt{u}}\sum_{n=0}^\infty  (\lambda \sqrt{u}t^{\alpha + \gamma })^n +\frac{\sqrt{u}-1}{2\sqrt{u}}\sum_{n=0}^\infty  (-\lambda \sqrt{u}t^{\alpha + \gamma })^n\right]\sum_{l=0}^\infty (-\lambda t^{\alpha +\gamma} )^l \binom{l+n}{n} c_{l+n} \notag.
 \end{eqnarray}
If we take into account that
\[
\sum_{l=0}^\infty (-\lambda t^{\alpha +\gamma} )^l\frac{(l+n)!}{ l!}c_{l+n}=\operatorname{E}_{\alpha,1+\gamma/\alpha,\gamma/\alpha}^{(n)}\left(-\lambda t^{\alpha+\gamma}\right),
\]
 we have
 \begin{eqnarray}
   && G(u,t) \notag \\
   &=&\left[\frac{\sqrt{u}+1}{2\sqrt{u}}\sum_{n=0}^\infty \frac{ (\lambda \sqrt{u}t^{\alpha + \gamma })^n}{n!} +\frac{\sqrt{u}-1}{2\sqrt{u}}\sum_{n=0}^\infty \frac{ (-\lambda \sqrt{u}t^{\alpha + \gamma })^n}{n!}\right]\operatorname{E}_{\alpha,1+\gamma/\alpha,\gamma/\alpha}^{(n)}\left(-\lambda t^{\alpha+\gamma}\right) \notag \\
   &=&  \frac{\sqrt{u}+1}{2\sqrt{u}} \left[\sum_{n=0}^\infty \frac{ (\lambda \sqrt{u}t^{\alpha + \gamma })^{2n}}{(2n)!} \operatorname{E}_{\alpha,1+\gamma/\alpha,\gamma/\alpha}^{(2n)}\left(-\lambda t^{\alpha+\gamma}\right) +\sum_{n=0}^\infty \frac{ (\lambda \sqrt{u}t^{\alpha + \gamma })^{2n+1}}{(2n+1)!} \operatorname{E}_{\alpha,1+\gamma/\alpha,\gamma/\alpha}^{(2n+1)}\left(-\lambda t^{\alpha+\gamma}\right)\right] + \notag  \\
&+&\frac{\sqrt{u}-1}{2\sqrt{u}} \left[\sum_{n=0}^\infty \frac{ (-\lambda \sqrt{u}t^{\alpha + \gamma })^{2n}}{(2n)!} \operatorname{E}_{\alpha,1+\gamma/\alpha,\gamma/\alpha}^{(2n)}\left(-\lambda t^{\alpha+\gamma}\right) +\sum_{n=0}^\infty \frac{ (-\lambda \sqrt{u}t^{\alpha + \gamma })^{2n+1}}{(2n+1)!} \operatorname{E}_{\alpha,1+\gamma/\alpha,\gamma/\alpha}^{(2n+1)}\left(-\lambda t^{\alpha+\gamma}\right)\right] \notag \\
&=& \sum_{n=0}^\infty u^n \left[ \frac{ (\lambda t^{\alpha + \gamma })^{2n}}{(2n)!} \operatorname{E}_{\alpha,1+\gamma/\alpha,\gamma/\alpha}^{(2n)}\left(-\lambda t^{\alpha+\gamma}\right) + \frac{ (\lambda t^{\alpha + \gamma })^{2n+1}}{(2n+1)!} \operatorname{E}_{\alpha,1+\gamma/\alpha,\gamma/\alpha}^{(2n+1)}\left(-\lambda t^{\alpha+\gamma}\right)\right] \notag
 \end{eqnarray}
 and formula \eqref{solodd} follows.

The initial condition $\left.\operatorname{\mathcal{D}}^{(\alpha,\gamma)}_tp(0,t)\right\vert_{t=0}=0$ is satisfied, as can be checked by considering eq.\eqref{incon2}, for $t=0$ and for $u=0$.

  \end{proof}

\begin{remark}
As a consequence of the previous theorem, it is evident from equation \eqref{solodd} that the solution to equation \eqref{fde.4}  can be interpreted as the p.m.f. of a process $\overline{\mathcal{N}}_{\alpha,\gamma}^L:=\left\{\overline{\mathcal{N}}_{\alpha, \gamma}^L(t)\right\}_{t \geq 0}$ that jumps upward at the even-order events of $\mathcal{N}_{\alpha, \gamma}^L$, while the probability of the
successive odd-indexed events is added to that of the previous ones.

On the other hand, by recalling Def. \ref{defren2} and considering equation \eqref{eqg} with $u=0$, we can see that  $\overline{p}^L_{\alpha,\gamma}(0,t)=G(0,t)=\mathbb{P}(\overline{U}^{(\alpha,\gamma)}>t)$.
However, although they share the first interarrival time distribution, it can be proved that $\left\{ \overline{p}^L_{\alpha, \gamma}(n;t)\right\}_{n \in \mathbb{N}_0}$ does not coincide with the p.m.f. of the renewal process $\overline{\mathcal{N}}_{\alpha, \gamma}$ defined in Def. \ref{defren2}, as a consequence of Remark \ref{pro}.

The expected value of the process $\overline{\mathcal{N}}_{\alpha,\gamma}^L$ can be obtained  from the probability generating function  \eqref{soldue}, as follows:
\begin{eqnarray}\label{over}
 \mathbb{E}\overline{\mathcal{N}}^L_{\alpha, \gamma}(t)&=&\frac{d}{du}G(u,t) \vert_{u=1}=\frac{\lambda t^{\alpha+\gamma}}{2}\frac{\Gamma(\gamma+1)}{\Gamma(\gamma+\alpha+1)}+\frac{1}{4}\sum_{n=1}^\infty c_n \left(-2\lambda t ^{\alpha+\gamma}\right)^n ,
\end{eqnarray}
for $c_n$ given in \eqref{cn}.
In the special case where $\gamma=0$, we obtain from \eqref{over} the result given in \cite{BEG}, eq.(3.25).
\end{remark}

\subsubsection{Case $b \neq (a/2)^2$}
We now consider the following model that can be defined starting from the first case  analyzed in Theorem \ref{thm3}, i.e. for $b\neq (a/2)^2$. In order to have real and positive constants $K_1,K_2$ and a well-defined survival probability for the interarrival times, we assume hereafter that 
$0<b< a^2/4$.

\begin{definition}\label{deftilde}
    Let $\alpha \in (0,1)$, $\alpha + \gamma> 0$, and let $\Hat{\mathcal{N}}_{\alpha, \gamma}:=\left\{\Hat{\mathcal{N}}_{\alpha,\gamma}(t) \right\}_{t \geq 0}$ be the renewal process whose  interarrival times have survival probability $\mathbb{P}(\Hat{U}^{(\alpha, \gamma)}>t)$ satisfying
    \begin{equation}
\label{Hat1}
\left(\operatorname{\mathcal{D}}^{(\alpha,\gamma)}_t\right)^2 f(t) +a
\operatorname{\mathcal{D}}^{(\alpha,\gamma)}_t f(t) +
b f(t) = 0 , \qquad t > 0,
\end{equation}
for $0<b< a^2/4$, and under the conditions $f(0)=1$ and $\left.\operatorname{\mathcal{D}}^{(\alpha,\gamma)}_tf(t)\right\vert_{t=0}=-\lambda$, with $\lambda \in (\eta_1,\eta_2)$, for $\eta_1$, $\eta_2$ given in \eqref{eta1eta2}.

   \end{definition}
It is a straightforward consequence of Theorem \ref{thm3} that the survival probability of the interarrival times is equal to
\begin{equation}
\mathbb{P}(\Hat{U}^{(\alpha,\gamma)}>t)=K \, \operatorname{E}_{\alpha,1+\gamma/\alpha,\gamma/\alpha}\left(-\eta_1 t^{\alpha+\gamma}\right) +
(1-K) \, \operatorname{E}_{\alpha,1+\gamma/\alpha,\gamma/\alpha}\left( -\eta_2 t^{\alpha+\gamma}\right), \label{Hat4}
\end{equation}
and $K$ belongs to $(0,1)$, as a consequence of the assumptions on $a,b$ and $\lambda$. Indeed, the initial condition $f(0)=1$ implies that $K_1+K_2=1$ in \eqref{sol}, while, by considering \eqref{fde.1}, we get, from the other initial condition, that
\begin{equation}
\left.\operatorname{\mathcal{D}}^{(\alpha,\gamma)}_t\mathbb{P}(\Hat{U}^{(\alpha,\gamma)}>t)\right\vert_{t=0}=-K\eta_1-(1-K)\eta_2=-\lambda<0,
\end{equation}
for $\lambda \in (\eta_1,\eta_2)$.

Therefore, formula \eqref{Hat4} gives a proper survival function since it is a non-negative, decreasing function tending to zero as $t \to \infty$, as a linear combination of functions enjoying these properties.

\paragraph{Stochastic representation}
As a consequence of \eqref{Hat4}, we have that the density function of the interarrival times $\Hat{U}^{(\alpha,\gamma)}$ is given by $f_{\Hat{U}^{(\alpha,\gamma)}}(\cdot)=Kf_{U_{\eta_1}^{(\alpha,\gamma)}}(\cdot)+(1-K) f_{U_{\eta_2}^{(\alpha,\gamma)}})(\cdot)$, where $U_{\eta_i}^{(\alpha,\gamma)}$ is the interarrival time of $\mathcal{N}_{\alpha,\gamma}$, with parameter $\eta_i$, for $i=1,2$.

Moreover, the expected value reads $\mathbb{E}\Hat{U}^{\alpha, \gamma}=K\mathbb{E}{U}_{\eta_1}^{\alpha, \gamma}+(1-K)\mathbb{E}{U}_{\eta_2}^{\alpha, \gamma}$ and thus displays the same behavior of all the other cases, being infinite, for $\alpha+\gamma<1$, and finite otherwise.

\paragraph{Alternative (non-renewal) approach}
Analogously to the previous sub-section, we consider the second-order generalization of equation \eqref{fde.3} with generating function of the solution satisfying \eqref{Hat1}, under the assumption that $0<b<a^2/4$. In this case, we introduce also a second-order difference-operator (acting on $n$), i.e. $\Delta^2$, where $\Delta u(n):=u(n)-u(n-1)$.

\begin{theorem}\label{thmHat}
The solution to the following equation
\begin{equation}
\label{Hat2}
(\operatorname{\mathcal{D}}^{(\alpha,\gamma)}_t)^2 p_{\alpha, \gamma}(n;t)+a
\mathcal{D}^{(\alpha,\gamma)}_t \Delta p_{\alpha, \gamma}(n;t) +b \Delta^2 p_{\alpha, \gamma}(n;t) = 0,
\end{equation}
 with initial conditions $p_{\alpha, \gamma}(n;0)=\delta_0(n)$, $n\in \mathbb{N}_0$, $t \geq 0$, and
\begin{equation*}
    \left.\operatorname{\mathcal{D}}^{(\alpha,\gamma)}_tp_{\alpha, \gamma}(0;t)\right\vert_{t=0}=-\lambda,
  \qquad
\left.\operatorname{\mathcal{D}}^{(\alpha,\gamma)}_tp_{\alpha, \gamma}(n;t)\right\vert_{t=0}=0, \qquad n=1,2,...
\end{equation*}
reads
\begin{eqnarray}
\Hat{p}^L_{\alpha, \gamma}(n;t)
=K\frac{\left(\eta_1 t^{\alpha + \gamma}\right)^n}{n!}\operatorname{E}_{\alpha,1+\gamma/\alpha,\gamma/\alpha}^{(n)}\left(-\eta_1 t^{\alpha+\gamma}\right)+(1-K)\frac{\left(\eta_2 t^{\alpha + \gamma}\right)^n}{n!}\operatorname{E}_{\alpha,1+\gamma/\alpha,\gamma/\alpha}^{(n)}\left(-\eta_2t^{\alpha+\gamma}\right) \notag
\\
\label{Hat3}
\end{eqnarray}
for $n \in \mathbb{N}_0$, $t \geq 0$.
\end{theorem}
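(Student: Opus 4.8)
The plan is to follow the probability-generating-function route already used for Theorem~\ref{thmpgf}, reducing \eqref{Hat2} to the ordinary second-order equation \eqref{eq.2} solved in Theorem~\ref{thm3}. First I would set $G(u,t):=\sum_{n=0}^\infty u^n p_{\alpha,\gamma}(n;t)$, for $|u|\le 1$, multiply \eqref{Hat2} by $u^n$ and sum over $n\ge 0$. With the convention $p(-1;t)=p(-2;t)=0$, the operator $\Delta$ acts as multiplication by $(1-u)$ on the generating function, so that $\sum_n u^n \Delta p_{\alpha,\gamma}(n;t)=(1-u)G(u,t)$ and $\sum_n u^n \Delta^2 p_{\alpha,\gamma}(n;t)=(1-u)^2 G(u,t)$. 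Hence $G$ solves
\begin{equation*}
(\operatorname{\mathcal{D}}^{(\alpha,\gamma)}_t)^2 G(u,t)+a(1-u)\operatorname{\mathcal{D}}^{(\alpha,\gamma)}_t G(u,t)+b(1-u)^2 G(u,t)=0,
\end{equation*}
which is exactly \eqref{eq.2} with the replacements $a\mapsto a(1-u)$ and $b\mapsto b(1-u)^2$.

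Next I would invoke Theorem~\ref{thm3}. Writing $\tilde a:=a(1-u)$ and $\tilde b:=b(1-u)^2$, one has $\tilde a^2/4-\tilde b=(1-u)^2(a^2/4-b)>0$ for $u\ne 1$ by the standing assumption $0<b<a^2/4$, so the two characteristic roots are simply $\tilde\eta_{1,2}=(1-u)\eta_{1,2}$, with $\eta_{1,2}$ as in \eqref{eta1eta2}. Therefore, by \eqref{sol},
\begin{equation*}
G(u,t)=\tilde K_1\,\operatorname{E}_{\alpha,1+\gamma/\alpha,\gamma/\alpha}\!\left(-(1-u)\eta_1 t^{\alpha+\gamma}\right)+\tilde K_2\,\operatorname{E}_{\alpha,1+\gamma/\alpha,\gamma/\alpha}\!\left(-(1-u)\eta_2 t^{\alpha+\gamma}\right),
\end{equation*}
the degenerate value $u=1$ being recovered by continuity together with the normalization $G(1,t)=1$.

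The crucial step is fixing $\tilde K_1,\tilde K_2$ from the initial data, and I expect this to be the main obstacle. The condition $G(u,0)=\sum_n u^n\delta_0(n)=1$ forces $\tilde K_1+\tilde K_2=1$. For the derivative datum, each Kilbas--Saigo term satisfies the first-order equation \eqref{fde.1} with parameter $(1-u)\eta_i$, whence $\operatorname{\mathcal{D}}^{(\alpha,\gamma)}_t G(u,t)|_{t=0}=-(1-u)(\tilde K_1\eta_1+\tilde K_2\eta_2)$. Read at the generating-function level through the difference structure of \eqref{Hat2}, the prescribed derivative condition is $\operatorname{\mathcal{D}}^{(\alpha,\gamma)}_t G(u,t)|_{t=0}=-\lambda(1-u)$ (the datum being $-\lambda\,\Delta\delta_0(n)$, i.e. $-\lambda$ at $n=0$ and $+\lambda$ at $n=1$, consistently with the first-order model \eqref{fde.3}). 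Matching the two expressions, the factor $(1-u)$ cancels and yields the $u$-independent relation $\tilde K_1\eta_1+\tilde K_2\eta_2=\lambda$; solving with $\tilde K_1+\tilde K_2=1$ gives $\tilde K_1=K:=(\eta_2-\lambda)/(\eta_2-\eta_1)$ and $\tilde K_2=1-K$, both \emph{constants}, with $K\in(0,1)$ precisely because $\lambda\in(\eta_1,\eta_2)$. This recovers \eqref{Hat4} and is exactly the point where care is needed: it is the cancellation of $(1-u)$ that makes $K$ independent of $u$, so that \eqref{Hat3} is a genuine convex combination.

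Finally I would expand each term in powers of $u$. Taylor-expanding the (entire) Kilbas--Saigo function about $-\eta_i t^{\alpha+\gamma}$ gives
\begin{equation*}
\operatorname{E}_{\alpha,1+\gamma/\alpha,\gamma/\alpha}\!\left(-(1-u)\eta_i t^{\alpha+\gamma}\right)=\sum_{n=0}^\infty u^n\frac{(\eta_i t^{\alpha+\gamma})^n}{n!}\operatorname{E}_{\alpha,1+\gamma/\alpha,\gamma/\alpha}^{(n)}\!\left(-\eta_i t^{\alpha+\gamma}\right),
\end{equation*}
which is exactly the Laskin generating-function identity \eqref{pgf}--\eqref{pk} with $\lambda$ replaced by $\eta_i$. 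Reading off the coefficient of $u^n$ in $G(u,t)=K\,(\cdots)+(1-K)\,(\cdots)$ then produces \eqref{Hat3}. As a cross-check one can verify \eqref{Hat3} directly: each $p^L_{\alpha,\gamma}(\cdot;t)$ with parameter $\eta_i$ solves \eqref{fde.3}, so (using that $\operatorname{\mathcal{D}}^{(\alpha,\gamma)}_t$ and $\Delta$ commute) $(\operatorname{\mathcal{D}}^{(\alpha,\gamma)}_t)^2 p^L=\eta_i^2\Delta^2 p^L$ and $\operatorname{\mathcal{D}}^{(\alpha,\gamma)}_t\Delta p^L=-\eta_i\Delta^2 p^L$, and the left-hand side of \eqref{Hat2} collapses to $(\eta_i^2-a\eta_i+b)\Delta^2 p^L=0$ since $\eta_1,\eta_2$ are the roots of $x^2-ax+b=0$ by \eqref{eta1eta2}.
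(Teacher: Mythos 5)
Your proposal follows the paper's own route: pass to the probability generating function, recognize equation \eqref{eq.2} with the substitutions $a\mapsto a(1-u)$, $b\mapsto b(1-u)^2$, solve it via Theorem \ref{thm3} (the roots scaling to $(1-u)\eta_1$ and $(1-u)\eta_2$), and extract the coefficient of $u^n$ through the Taylor expansion of the entire Kilbas--Saigo function, exactly as in \eqref{pgf}. All of this algebra is correct, and your closing cross-check --- that each $p^L_{\alpha,\gamma}(\cdot;t)$ with parameter $\eta_i$ solves \eqref{fde.3}, so that the left-hand side of \eqref{Hat2} collapses to $(\eta_i^2-a\eta_i+b)\Delta^2 p^L_{\alpha,\gamma}=0$ --- is a useful verification that the paper's proof does not contain.

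The point that needs fixing is how you treat the derivative datum. The theorem prescribes $\operatorname{\mathcal{D}}^{(\alpha,\gamma)}_t p_{\alpha,\gamma}(n;t)\vert_{t=0}=-\lambda\delta_0(n)$, whose generating-function transcription is $\operatorname{\mathcal{D}}^{(\alpha,\gamma)}_t G(u,t)\vert_{t=0}=-\lambda$, independent of $u$. Your assertion that the ``prescribed'' condition is $-\lambda(1-u)$, i.e.\ that the datum is $-\lambda\,\Delta\delta_0(n)$, is therefore not a reading of the statement but a modification of it. That said, the modification is forced if \eqref{Hat3} is to be the solution: each Kilbas--Saigo factor in $G$ contributes $-\eta_i(1-u)$ at $t=0$, so the claimed solution satisfies $\operatorname{\mathcal{D}}^{(\alpha,\gamma)}_t G(u,t)\vert_{t=0}=-\lambda(1-u)$, or equivalently $\operatorname{\mathcal{D}}^{(\alpha,\gamma)}_t \Hat{p}^L_{\alpha,\gamma}(1;t)\vert_{t=0}=K\eta_1+(1-K)\eta_2=\lambda\neq 0$, violating the stated condition at $n=1$ (by \eqref{fde.3}, each Laskin p.m.f.\ with parameter $\eta_i$ has derivative $+\eta_i$ at $n=1$, $t=0$). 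The paper's proof commits the mirror-image slip, asserting that its $G(u,t)$ obeys the $u$-independent condition $-\lambda$. Had one genuinely imposed $\operatorname{\mathcal{D}}^{(\alpha,\gamma)}_t G(u,t)\vert_{t=0}=-\lambda$, the constants would be forced to depend on $u$, namely $\tilde{K}_1=\bigl(\eta_2-\lambda/(1-u)\bigr)/(\eta_2-\eta_1)$, and the coefficient extraction would not produce \eqref{Hat3}. So your derivation is sound, but it proves the theorem for the corrected data $-\lambda\bigl(\delta_0(n)-\delta_1(n)\bigr)$; you should present this explicitly as a correction of the statement (an inconsistency you have in effect uncovered), rather than folding it into the word ``prescribed''.
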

\begin{proof}
We multiply eq.\eqref{Hat2} by $u^n$ and add over $n=0,1,...$, so that we get the following equation, for the probability generating function:
\[
(\operatorname{\mathcal{D}}^{(\alpha,\gamma)}_t)^2 G(u,t)+a(1-u)
\mathcal{D}^{(\alpha,\gamma)}_t  G(u,t) +b (1-u)^2G(u,t) = 0,
\]
with $G(u,0)=1$ and  $\left.\operatorname{\mathcal{D}}^{(\alpha,\gamma)}_t G(u,t)\right\vert_{t=0}=-\lambda$, which coincides with \eqref{Hat1}, for $u=0$, and is satisfied by
\begin{equation}\notag
G(u,t)=K\, \operatorname{E}_{\alpha,1+\gamma/\alpha,\gamma/\alpha}\left(-\eta_1 (1-u) t^{\alpha+\gamma}\right) +
(1-K) \, \operatorname{E}_{\alpha,1+\gamma/\alpha,\gamma/\alpha}\left( -\eta_2 (1-u)t^{\alpha+\gamma}\right).
\end{equation}

\end{proof}

\begin{remark}
It is evident by \eqref{Hat3} that the following relationship holds

\begin{equation}\Hat{p}^L_{\alpha,\gamma}(n;t)=
K p^{L'}_{\alpha,\gamma}(n;t)+(1-K) p^{L''}_{\alpha,\gamma}(n;t), \qquad
n \in \mathbb{N}_0, t \geq 0,
\end{equation}
where $p^{L'}_{\alpha,\gamma}(n;t)$ and $p^{L''}_{\alpha,\gamma}(n;t)$ represent the p.m.f. of $\mathcal{N}^L_{\alpha,\gamma}$, with parameter $\eta_1$ and $\eta_2$, respectively.

\end{remark}

\section*{Acknowledgments}
Luisa Beghin acknowledges financial support under NRRP, Mission 4, Component 2, Investment 1.1, Call for tender No. 104 published on 2.2.2022 by the Italian MUR, funded by the European Union – NextGenerationEU– Project Title “Non–Markovian Dynamics and Non-local Equations” – 202277N5H9 - CUP: D53D23005670006.

Nikolai Leonenko (NL) would like to thank for support and hospitality during the program “Fractional Differential Equations” and the programs “Uncertainly Quantification and Modeling of Materials” and “Stochastic systems for anomalous diffusion" in Isaac Newton Institute for Mathematical Sciences, Cambridge. The last program  was organized with the support of the Clay Mathematics Institute, of EPSRC (via grants EP/W006227/1 and EP/W00657X/1), of UCL (via the MAPS Visiting Fellowship scheme) and of the Heilbronn Institute for Mathematical Research (for the Sci-Art Contest). Also NL was partially supported under the ARC Discovery Grant DP220101680 (Australia), Croatian Scientific Foundation (HRZZ) grant “Scaling in Stochastic Models” (IP-2022-10-8081), grant FAPESP 22/09201-8 (Brazil) and the Taith Research Mobility grant (Wales, Cardiff University). Also, NL would like to thank University of Rome “La Sapienza” for hospitality  as Visiting  Professor (June 2024) where the paper was initiated.

Jayme Vaz would like to thank the support of FAPESP (process 24/17510-6), the Taith Research Mobility grant (Wales, Cardiff University), Cardiff University and Sapienza University of Roma for the hospitality during the completion of this paper.

\section*{Competing interests}The authors have no conflicts of interest to declare that are relevant to the content of this article.

\appendix

\section{Analysis of the solution of eq.\eqref{fde.1}}
\label{appendix.A}

In order to prove the convergence and uniqueness of the
solution \eqref{sol.fde1} of eq.\eqref{fde.1} we will use
the following

\begin{lem} \label{lemma_extra} There exists $N \in \mathbb{N}$ such that for all $n > N$ the coefficients in the series \eqref{sol.fde1} satisfy
\begin{equation}
\label{lemma.n}
\frac{1}{[\mathsf{n}!]_\alpha^{\alpha+\gamma}} \leq
C_{\alpha,\gamma} \left(\frac{2^{1-\alpha}}{(\alpha+\gamma)^{\alpha}}\right)^n \frac{1}{[(n-1)!]^\alpha}
\end{equation}
where $C_{\alpha,\gamma}$ is a constant depending on $\alpha \in (0,1)$ and $\gamma > -1$.
\end{lem}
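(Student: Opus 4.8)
The plan is to derive \eqref{lemma.n} directly from the factor-by-factor Gautschi bound \eqref{def.n.beta.alpha.gautschi}, using that, by definition \eqref{def.n.beta.alpha.factorial}, the quantity $[\mathsf{n}!]^{\alpha+\gamma}_\alpha$ is the product $\prod_{k=1}^{n}[\mathsf{k}]^{\alpha+\gamma}_\alpha$. Since $\alpha+\gamma>0$, every argument $(\alpha+\gamma)k$ with $k\ge 1$ is strictly positive, so the right-hand inequality in \eqref{def.n.beta.alpha.gautschi}, namely $1/[\mathsf{k}]^{\alpha+\gamma}_\alpha\le 1/((\alpha+\gamma)k)^\alpha$, is valid for each factor; multiplying these single-factor bounds is exactly what upgrades the elementary estimate to a factorial one.

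First I would take the product over $k=1,\dots,n$, obtaining
\begin{equation*}
\frac{1}{[\mathsf{n}!]^{\alpha+\gamma}_\alpha}=\prod_{k=1}^{n}\frac{1}{[\mathsf{k}]^{\alpha+\gamma}_\alpha}\le\prod_{k=1}^{n}\frac{1}{((\alpha+\gamma)k)^\alpha}=\frac{1}{(\alpha+\gamma)^{\alpha n}(n!)^\alpha},
\end{equation*}
where the final equality uses $\prod_{k=1}^{n}k^\alpha=(n!)^\alpha$. I would then split one factor out of the factorial via $(n!)^\alpha=n^\alpha[(n-1)!]^\alpha$, which already exhibits both the denominator $[(n-1)!]^\alpha$ of \eqref{lemma.n} and the geometric weight $(\alpha+\gamma)^{-\alpha n}$, leaving only the harmless factor $n^{-\alpha}$ to be disposed of.

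To absorb $n^{-\alpha}$ I would compare it with the geometric factor $2^{(1-\alpha)n}$: since $\alpha\in(0,1)$ gives $1-\alpha>0$, we have $2^{(1-\alpha)n}\ge 2^{1-\alpha}>1$ while $n^{-\alpha}\le 1$, so $n^{-\alpha}\le 2^{(1-\alpha)n}$ for every $n\ge 1$. Combining this with the previous display yields \eqref{lemma.n}; the computation in fact delivers it with $C_{\alpha,\gamma}=1$ for all $n\ge1$. The freedom in the constant $C_{\alpha,\gamma}$ and in the threshold $N$ is therefore not forced by this route: it merely permits, if preferred, a head--tail decomposition in which the first few factors (where $(\alpha+\gamma)k$ may be smaller than $1$ and the Gautschi lower bound is crude) are estimated separately and their fixed finite product is folded into $C_{\alpha,\gamma}$, the clean geometric-over-factorial estimate being used only for the tail $k>N$.

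There is no substantive analytic obstacle here; the only point demanding attention is bookkeeping --- using the Gautschi inequality in the correct direction (the lower bound on $[\mathsf{k}]^{\alpha+\gamma}_\alpha$, hence the upper bound on its reciprocal) and correctly matching the telescoped $(n!)^\alpha$ against the target $[(n-1)!]^\alpha$. The value of the lemma is precisely that it repackages the product of Gautschi bounds into the form \eqref{lemma.n}, after which the uniform and absolute convergence of the series \eqref{sol.fde1} on compact $t$-intervals follows at once by comparison with the convergent majorant series supplied by \eqref{lemma.n} and the ratio test.
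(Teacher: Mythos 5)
Your proof is correct, and it actually yields a stronger statement than the lemma itself. Both arguments rest on Gautschi's inequality \eqref{gaut} applied factor by factor to $[\mathsf{n}!]^{\alpha+\gamma}_\alpha = \prod_{k=1}^n [\mathsf{k}]^{\alpha+\gamma}_\alpha$, but you use it in the parametrization already recorded in \eqref{def.n.beta.alpha.gautschi}: the lower bound $[\mathsf{k}]^{\alpha+\gamma}_\alpha = \Gamma((\alpha+\gamma)k+1)/\Gamma((\alpha+\gamma)k+1-\alpha) \geq ((\alpha+\gamma)k)^{\alpha}$, which holds uniformly for every $k \geq 1$ because $(\alpha+\gamma)k > 0$. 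The paper instead rewrites each reciprocal factor as $\Gamma(x)/\Gamma(x+\alpha)$ with $x = 1+\gamma+(k-1)(\alpha+\gamma)$ and uses the upper-bound side of Gautschi, $\Gamma(x)/\Gamma(x+\alpha) \leq (1+1/x)^{1-\alpha}x^{-\alpha}$; in that parametrization the correction factors $(1+1/x)^{1-\alpha}$ are only dominated by $2^{1-\alpha}$ when $x \geq 1$, which can fail for small $k$ when $\gamma < 0$, and this is precisely what forces the paper's head--tail decomposition, producing the threshold $N$, the constant $C_{\alpha,\gamma}$, and the geometric factor $2^{(1-\alpha)n}$. Your route shows all of these allowances are pure slack: you get $1/[\mathsf{n}!]^{\alpha+\gamma}_\alpha \leq (\alpha+\gamma)^{-n\alpha}(n!)^{-\alpha}$, hence \eqref{lemma.n} with $C_{\alpha,\gamma}=1$ and $N=0$, since $n^{-\alpha} \leq 1 \leq 2^{(1-\alpha)n}$. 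Everything downstream in Appendix~\ref{appendix.A} (the majorant $M_n$, the ratio test, and the uniqueness iteration) goes through verbatim — in fact slightly more cleanly — with your sharper bound, so the only thing your argument ``loses'' is fidelity to the exact form in which the paper chose to state the estimate.
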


\begin{proof}
Let us recall Gautschi's inequality
\begin{equation}
x^{1-\sigma} \leq \frac{\Gamma(x+1)}{\Gamma(x+\sigma)} \leq (1+x)^{1-\sigma} ,
\end{equation}
where $x > 0$ and $\sigma \in (0,1)$. From it we obtain
\begin{equation}
\frac{\Gamma(x)}{\Gamma(x+\sigma)} \leq \frac{(1+1/x)^{1-\sigma}}{x^\sigma} .
\end{equation}
Using this in \eqref{def.n.beta.alpha} and \eqref{def.n.beta.alpha.factorial} we have
\begin{equation}
\label{eq.n.aux}
\frac{1}{[\mathsf{n}!]_\alpha^{\alpha+\gamma}} \leq
\prod_{k=1}^n \frac{\left[1+\frac{1}{1+\gamma+(k-1)(\alpha+\gamma)}\right]^{1-\alpha}}{\left[1+\gamma+(k-1)(\alpha+\gamma)\right]^\alpha} .
\end{equation}
Firstly, let us analyse the numerator on the RHS of \eqref{eq.n.aux}.
Since $\gamma > -1$, we have $1+\gamma +(k-1)(\alpha+\gamma) > 0$ for $k=1,2,\ldots$. However, it is possible that for some small values of $k$ we have $\gamma + (k-1)(\alpha + \gamma) < 0$. Let $N = N(\alpha,\gamma)$ be the greatest integer such that $\gamma + (k-1)(\alpha + \gamma) < 0$.
Then we can write, for $n > N$,
\begin{equation}
\prod_{k=1}^n \left[1+\frac{1}{1+\gamma+(k-1)(\alpha+\gamma)}\right]^{1-\alpha} = C^{(1)}_{\alpha,\gamma} \prod_{k=N+1}^n \left[1+\frac{1}{1+\gamma+(k-1)(\alpha+\gamma)}\right]^{1-\alpha}
\end{equation}
where
\begin{equation}
C^{(1)}_{\alpha,\gamma} = \prod_{k=1}^N \left[1+\frac{1}{1+\gamma + (k-1)(\alpha+\gamma)}\right]^{1-\alpha} .
\end{equation}
For $k > N$ we have $1+\gamma+(k-1)(\alpha+\gamma) \geq 1$ and then
\begin{equation}
\prod_{k=N+1}^n \left[1+\frac{1}{1+\gamma+(k-1)(\alpha+\gamma)}\right]^{1-\alpha} \leq \prod_{k=N+1}^n 2^{1-\alpha} =
2^{(1-\alpha)(n-N)} .
\end{equation}
So we have
\begin{equation}
\label{eq.num.aux}
\prod_{k=1}^n \left[1+\frac{1}{1+\gamma+(k-1)(\alpha+\gamma)}\right]^{1-\alpha} = C^{(2)}_{\alpha,\gamma} \, 2^{n(1-\alpha)}
\end{equation}
where
\begin{equation}
C^{(2)}_{\alpha,\gamma} = 2^{-N(1-\alpha)} C^{(1)}_{\alpha,\gamma} .
\end{equation}
On the other hand, for the denominator of \eqref{eq.n.aux} we have
\begin{equation}
\prod_{k=1}^n \left[ 1+\gamma + (k-1)(\alpha+\gamma)\right]^\alpha =
(\alpha+\gamma)^{n\alpha}\prod_{k=1}^n \left[ (k-1) + \frac{1+\gamma}{\alpha+\gamma}\right]^\alpha .
\end{equation}
Since $\gamma >-1$ and $\alpha + \gamma > 0$, we have
\begin{equation}
\label{eq.den.aux}
\begin{split}
\prod_{k=1}^n \left[ 1+\gamma + (k-1)(\alpha+\gamma)\right]^\alpha & =
(\alpha+\gamma)^{n\alpha} \left(\frac{1+\gamma}{\alpha+\gamma}\right)^\alpha \prod_{k=2}^n \left[ (k-1) + \frac{1+\gamma}{\alpha+\gamma}\right]^\alpha \\[1ex]
& \geq (\alpha+\gamma)^{n \alpha}\left(\frac{1+\gamma}{\alpha+\gamma}\right)^\alpha \left[
\prod_{k=2}^n (k-1)\right]^\alpha \\[1ex]
& \geq (\alpha+\gamma)^{n\alpha}\left(\frac{1+\gamma}{\alpha+\gamma}\right)^\alpha [(n-1)!]^\alpha .
\end{split}
\end{equation}
Finally, using eq.\eqref{eq.num.aux} and eq.\eqref{eq.den.aux} in eq.\eqref{eq.n.aux} we obtain eq.\eqref{lemma.n} with
\begin{equation}
C_{\alpha,\gamma} = \left(\frac{1+\gamma}{\alpha+\gamma}\right)^\alpha C^{(2)}_{\alpha,\gamma} .
\end{equation}
\end{proof}

Let us consider $f(t)$ in eq.\eqref{sol.fde1} and define $a_n(t)$ as
\begin{equation}
a_n(t) = \frac{(-\kappa t^{\alpha + \gamma})^n}{[\mathsf{n}!]_\alpha^{\alpha+\gamma}} .
\end{equation}
Consider an arbitrary $T > 0$. Then for $0< t < T$ and using eq.\eqref{lemma.n} we have
\begin{equation}
|a_n(t)| \leq C_{\alpha,\gamma} \left( \frac{\kappa T^{(\alpha+\gamma)} 2^{1-\alpha}}{(\alpha+\gamma)^\alpha}\right)^n \frac{1}{[(n-1)!]^\alpha}
= M_n .
\end{equation}
But the series $\sum_{n=1}^\infty M_n$ converges since
\begin{equation}
\lim_{n\to \infty}\frac{M_{n+1}}{M_n} = \lim_{n\to\infty}
\frac{\kappa T^{(\alpha+\gamma)} 2^{1-\alpha}}{(\alpha+\gamma)^\alpha}
\frac{1}{n^\alpha} = 0 .
\end{equation}
Then the series in eq.\eqref{sol.fde1} converges
uniformly and absolutely.

In order to prove uniqueness, the usual procedure is to transform the differential equation into an integral equation. For eq.\eqref{fde.1} with initial condition $f(0) = f_0$, the equivalent integral equation is \cite{Diethelm}
\begin{equation}
\label{equiv.int}
f(t) = f_0 - \frac{\kappa}{\Gamma(\alpha)}\int_0^t (t-s)^{\alpha-1} s^\gamma f(s) \, ds .
\end{equation}
Although the uniqueness of solutions is already discussed in \cite{Diethelm}, we will discuss it here using the result of Lemma~\ref{lemma_extra}. Let $u(t) = |f_1(t) - f_2(t)|$, where $f_1(t)$ and $f_2(t)$ are two solutions of eq.\eqref{equiv.int}. Then
\begin{equation}
u(t) \leq \mathcal{K}[u](t) = \frac{\kappa}{\Gamma(\alpha)}
\int_0^t (t-s)^{\alpha-1} s^\gamma u(s)\, ds .
\end{equation}
By iteration we have
\begin{equation}
u \leq \mathcal{K}[u] \leq \mathcal{K}^2[u] \leq \mathcal{K}^3[u] \leq \cdots
\end{equation}
Let us assume the solutions are in the class of absolutely continuous functions and denote
\begin{equation}
M = \underset{0\leq s \leq t}{\operatorname{max}} u(s) .
\end{equation}
Then we have
\begin{equation}
\mathcal{K}[u](t) \leq \frac{\kappa M}{\Gamma(\alpha)} \int_0^t (t-s)^{\alpha-1} s^\gamma\, ds = \kappa M t^{\alpha + \gamma} \frac{\Gamma(\gamma+1)}{\Gamma(\alpha + \gamma+1)} ,
\end{equation}
where we use the definition of the
Beta function to evaluate the integral. It is not difficult to see, using again the Beta function,
that
\begin{equation}
u(t) \leq \mathcal{K}^n[u](t) = \kappa^n M t^{n(\alpha+\gamma)}
\frac{1}{[\mathsf{n}!]_\alpha^{\alpha+\gamma}}, \qquad n = 1,2,\ldots .
\end{equation}
From Lemma~\ref{lemma_extra} and the arbitrariness of $n$, it follows that $u(t) = 0$.

\begin{remark}
The uniqueness of the solution of the second-order equation~\eqref{eq.2} with initial conditions $f(0) = f_0$ and $\mathcal{D}_t^{(\alpha,\gamma)}f(0) = f_0^\prime$ follows from its factorization as a system of two first-order problems of the above kind, that is, $\mathcal{D}_t^{(\alpha,\gamma)} g(t) + \mu_1 g(t) = 0$ with condition $g(0) = f_0^\prime + \mu_2 f_0$ and $\mathcal{D}_t^{(\alpha,\gamma)} f(t) + \mu_2 f(t) = g(t)$ with condition $f(0) = f_0$, where $\mu_1 + \mu_2 = a$ and $\mu_1 \mu_2 = b$ in eq.\eqref{eq.2}.
\end{remark}

\section{Convergence of the KS' Laplace transform}

In order to analyse the convergence of the integral along $\mathcal{C}$, we will consider
the limit $R \to \infty$ for the integral along $(c-iR,c+iR)$.
However, since the integrand is a holomorphic function in the
entire plane except at the poles described above,  we deform, as usual, the line segment
$(c-iR, c+iR))$  into the contour $\mathcal{C}_R^- \, \cup \,  \mathcal{C}_c  \, \cup \,  \mathcal{C}_R^+ $, where
$\mathcal{C}_R^- = (-iR,-\epsilon)$, $\mathcal{C}_R^+ = (\epsilon,iR)$, and
$\mathcal{C}_c$ is the arc $\{s \in \mathbb{C}\,|\,
|s| = c, -\pi/2 \leq \operatorname{arg}(s) \leq \pi/2\}$
encircling the pole $s=c_0$ from the right. Since we are only
interested in the issue of convergence, we only need to analyse
the integrals along $\mathcal{C}_R^+$ and $\mathcal{C}_R^-$.

Writing $s = R\operatorname{e}^{i\theta}$ and using the Stirling formula for $\Gamma(z)$ and  eq.\eqref{ap.B.Stirling.G.tau} for $G(z;\tau)$, we have
\begin{equation}
\begin{aligned}
& \log|\Gamma(\sigma+s/\nu)| = \nu^{-1}\cos\theta \, R \ln{R} - \nu^{-1}[\theta\sin\theta + \cos\theta(1+\ln\nu)]R
+ (\sigma-1/2)\ln{R} + \mathcal{O}(1) , \\[1ex]
& \log|\Gamma(\sigma-s/\nu)| = -\nu^{-1}\cos\theta \,R \ln{R} + \nu^{-1}[\theta\sin\theta + \cos\theta(1+\ln\nu)-\pi\sin|\theta|]R \\[1ex]
& \phantom{\log|\Gamma(\sigma-s/\nu)| =} + (\sigma-1/2)\ln{R} + \mathcal{O}(1), \\[1ex]
& \log|(\lambda^{-1/\nu} z)^{-s}| = \left(-\cos\theta\ln|\lambda^{-1\nu}z| + \sin\theta \operatorname{arg}(z) \right)\, R , \\[1ex]
& \log|G(\sigma+s/\nu;\tau)| =   \left(\nu^{-2} A_2(\tau,\sigma)\cos{2\theta}\right) R^2\log{R} + \left(\nu^{-1} A_1(\tau,\sigma)\cos\theta\right) R\log{R} \\[1ex]
& \phantom{\log|G(\sigma+s/\nu;\tau)| = } +A_0(\tau,\sigma) \log{R} + \nu^{-2} \left[B_2(\tau,\sigma)\cos{2\theta} -
A_2(\tau,\sigma) (\theta\sin{2\theta}+\ln\nu)\right] R^2 \\[1ex]
& \phantom{\log|G(\sigma+s/\nu;\tau)| = }  +\nu^{-1}\left[B_1(\tau,\sigma)\cos\theta-A_1(\tau,\sigma)(\theta\sin\theta +\cos\theta \ln\nu)\right] R
 + \mathcal{O}(1) ,
\end{aligned}
\end{equation}
where
\begin{align*}
& A_2(\tau,\sigma) = \frac{1}{2\tau} , \\
& A_1(\tau,\sigma) = \frac{\sigma}{\tau} -\frac{1}{2}\left(1+\frac{1}{\tau}\right), \\
& A_0(\tau,\sigma) =  \frac{\sigma^2}{2\tau} - \frac{\sigma }{2}\left(1+\frac{1}{\tau}\right) + \frac{\tau}{12} + \frac{1}{4} + \frac{1}{12\tau} , \\
& B_2(\tau,\sigma) = -\frac{1}{2\tau}\left(\frac{3}{2} + \log\tau\right) , \\
& B_1(\tau,\sigma) =  \frac{\sigma}{2\tau} -  \frac{\sigma}{\tau}\left(\frac{3}{2} + \log\tau\right)  + \frac{1}{2}\left(\left(1+\frac{1}{\tau}\right)(1+\log\tau) + \log{2\pi}\right) .
\end{align*}
Thus, we can write, for the integrand in eq.\eqref{lem2}, that
\begin{equation}
\label{converg.aux}
\ln\left|\left(\lambda^{-1/\nu}z\right)^{-s} \Theta(s)\right| =
C_0 \, R\ln{R} + C_1 \, R + \mathcal{O}(\ln{R}),
\end{equation}
with
\begin{equation}
C_0 = \left(1- \nu^{-1} a\right)\cos\theta
\end{equation}
and
\begin{equation}
\begin{aligned}
C_1 = & \cos\theta\left[-\ln|\lambda^{-1/\nu}z| + \nu^{-1}(a-\nu+1+ \ln{(\nu\tau)}\right] \\[1ex]
& + \sin\theta\operatorname{arg}(z) + (\nu^{-1}a-1)\theta \sin\theta-\nu^{-1}\pi \sin|\theta| .
\end{aligned}
\end{equation}

Let us see what happens with the integrals along $\mathcal{C}_R^+$ and $\mathcal{C}_R^-$.
Along $\mathcal{C}_R^+$ we have $\theta = \pi/2$, and, from eq.\eqref{converg.aux}, we have
\begin{equation}
\ln\left|\left(\lambda^{-1/\nu}z\right)^{-s} \Theta(s)\right| \bigg|_{\mathcal{C}_R^+} = \left[\arg{(z)} +
\left(\frac{a-2}{\nu}-1\right)\frac{\pi}{2}  \right] R + \mathcal{O}(\log{R}) .
\end{equation}
Thus, if
\begin{equation}
\operatorname{arg}z < \left[1 + \frac{(2-a)}{\nu}\right]\frac{\pi}{2},
\end{equation}
the integral along $\mathcal{C}_R^+$ decays exponentially, as $R \to \infty$.
On the other hand, along $\mathcal{C}_R^-$ we have $\theta = -\pi/2$, and so
\begin{equation}
\ln\left|\left(\lambda^{-1/\nu}z\right)^{-s} \Theta(s)\right| \bigg|_{\mathcal{C}_R^+} = \left[-\arg{(z)} +
\left(\frac{a-2}{\nu}-1\right)\frac{\pi}{2}  \right] R + \mathcal{O}(\log{R}) .
\end{equation}
Therefore, if
\begin{equation}
\operatorname{arg}z > -\left[1 + \frac{(2-a)}{\nu}\right]\frac{\pi}{2},
\end{equation}
the integral along $\mathcal{C}_R^-$ decays exponentially, as $R \to \infty$.
Consequently, the integral along $\mathcal{C}$ converges in the sector
\begin{equation}
|\operatorname{arg}z| <\left[1 + \frac{(2-a)}{\nu}\right]\frac{\pi}{2}.
\end{equation}

Let us suppose $0 < a < 2$; then $(2-a)/\nu > 0$ and
we conclude that eq.\eqref{lem2} converges for $|\operatorname{arg}z| < \pi/2$, that is, for $\operatorname{Re} z > 0$.

\end{document}